\newcommand{\nc}{\newcommand}
\nc{\gramU}{\textbf{\textsl{U}}}
\nc{\fg}{\mathfrak{f} }     \nc{\vg}{\mathfrak{v} }       \nc{\wg}{\mathfrak{w} }
\nc{\zg}{\mathfrak{z} }     \nc{\ngo}{\mathfrak{n} }      \nc{\kg}{\mathfrak{k} }
\nc{\ngoc}{\widehat{\mathfrak{n}} }
\nc{\mg}{\mathfrak{m} }     \nc{\bg}{\mathfrak{b} }       \nc{\ggo}{\mathfrak{g} }
\nc{\ggoc}{\widehat{\mathfrak{g}} }
\nc{\sog}{\mathfrak{so} }
\nc{\sug}{\mathfrak{su} }   \nc{\spg}{\mathfrak{sp} }     \nc{\slg}{\mathfrak{sl} }
\nc{\glg}{\mathfrak{gl} }   \nc{\cg}{\mathfrak{c} }       \nc{\rg}{\mathfrak{r} }
\nc{\hg}{\mathfrak{h} }     \nc{\tg}{\mathfrak{t} }       \nc{\ug}{\mathfrak{u} }
\nc{\dg}{\mathfrak{d} }     \nc{\ag}{\mathfrak{a} }       \nc{\pg}{\mathfrak{p} }
\nc{\agc}{\widehat{\mathfrak{a}} }
\nc{\sg}{\mathfrak{s} }     \nc{\affg}{\mathfrak{aff} }
\nc{\ggob}{\overline{\mathfrak{g}} }
\nc{\pca}{\mathcal{P}}       \nc{\nca}{\mathcal{N}}       \nc{\lca}{\mathcal{L}}
\nc{\oca}{\mathcal{O}}       \nc{\mca}{\mathcal{M}}       \nc{\tca}{\mathcal{T}}
\nc{\aca}{\mathcal{A}}       \nc{\cca}{\mathcal{C}}       \nc{\gca}{\mathcal{G}}
\nc{\sca}{\mathcal{S}}       \nc{\hca}{\mathcal{H}}       \nc{\bca}{\mathcal{B}}
\nc{\dca}{\mathcal{D}}       \nc{\rca}{\mathcal{R}}
\nc{\val}{\operatorname{val}}
\nc{\vp}{\varphi}
\nc{\ddt}{\tfrac{{\rm d}}{{\rm d}t}}
\nc{\dpar}{\tfrac{\partial}{\partial t}}
\nc{\im}{\sqrt{-1}}        
\newcommand{\fna}[5]{
\begin{array}{rccl}
{#1}:&\hspace{-2mm}{#2}&\hspace{-2mm}\longrightarrow&\hspace{-2mm}{#3}\\
     &\hspace{-2mm}{#4}&\hspace{-2mm}\longmapsto    &\hspace{-2mm}{#5}
\end{array}}
\newcommand{\fnn}[3]{
\begin{array}{rccl}
{#1}:&\hspace{-2mm}{#2}&\hspace{-2mm}\longrightarrow&\hspace{-2mm}{#3}\\
\end{array}}
\newcommand{\comillas}[1]{\textquotedblleft{#1}\textquotedblright}
\nc{\SO}{\mathrm{SO}}           \nc{\Spe}{\mathrm{Sp}}          \nc{\Sl}{\mathrm{SL}}
\nc{\SU}{\mathrm{SU}}           \nc{\Or}{\mathrm{O}}            \nc{\U}{\mathrm{U}}
\nc{\Se}{\mathrm{S}}            \nc{\Cl}{\mathrm{Cl}}           \nc{\Spein}{\mathrm{Spin}}
\nc{\Pin}{\mathrm{Pin}}
\nc{\Glr}{\mathrm{GL}_n(\RR)}   \nc{\Glc}{\mathrm{GL}_n(\CC)}   \nc{\Glv}{\mathrm{GL}(V)}    \nc{\Glk}{\mathrm{GL}_n(\fk)}   \nc{\Gl}{\mathrm{GL}}
\nc{\GrpG}{\mathrm{G}}          \nc{\GrpH}{\mathrm{H}}          \nc{\GrpA}{\mathrm{A}}       \nc{\GrpT}{\mathrm{T}}          \nc{\GrpK}{\mathrm{K}}
\nc{\GrpGc}{\widehat{\mathrm{G}}}
\nc{\GrpN}{\mathrm{N}}
\nc{\g}{\mathfrak{gl}_n(\RR)}
\nc{\RR}{{\Bbb R}} \nc{\HH}{{\Bbb H}} \nc{\CC}{{\Bbb C}} \nc{\ZZ}{{\Bbb Z}}
\nc{\FF}{{\Bbb F}} \nc{\NN}{{\Bbb N}} \nc{\QQ}{{\Bbb Q}} \nc{\PP}{{\Bbb P}}
\nc{\euler}{{\rm e}}
\nc{\vs}{\vspace{.2cm}} \nc{\vsp}{\vspace{1cm}}
\nc{\ip}{\langle \cdot , \cdot \rangle}
\nc{\ipd}{\langle \hspace{-0.5mm}\langle \cdot , \cdot \rangle\hspace{-0.5mm}\rangle}
\nc{\ippd}{( \hspace{-0.5mm} ( \cdot , \cdot ) \hspace{-0.5mm} )}
\nc{\ipp}{(      \cdot , \cdot       )}
\nc{\la}{\langle} \nc{\ra}{\rangle}
\nc{\ortsum}{ \mbox{\tiny $\displaystyle \bigoplus^{\perp}$}}
\nc{\dirsum}{ \mbox{\tiny $\displaystyle \bigoplus $}}
\nc{\ortres}{ \mbox{\tiny $\displaystyle \bigominus^{\perp}$}}
\newcommand{\ipa}[2]{\langle {#1} , {#2} \rangle}
\newcommand{\ipda}[2]{\langle \hspace{-0.5mm}\langle {#1} , {#2} \rangle\hspace{-0.5mm}\rangle}
\nc{\unm}{\tfrac{1}{2}}\nc{\unc}{\tfrac{1}{4}} \nc{\und}{\tfrac{1}{16}}
\nc{\no}{\vs\noindent}
\nc{\lamn}{\Lambda^2(\RR^n)^*\otimes\RR^n} \nc{\lamp}{\Lambda^2\pg^*\otimes\pg}
\nc{\lamg}{\Lambda^2\ggo^*\otimes\ggo} \nc{\lamngo}{\Lambda^2\ngo^*\otimes\ngo}
\nc{\lamnk}{\Lambda^2(\fk^n)^*\otimes \fk^n} \nc{\lamnkt}{\Lambda^3(\fk^n)^*\otimes \fk^n}
\nc{\tangz}{{\rm T}^{\rm Zar}}
\nc{\mum}{/\!\!/} \nc{\kir}{/\!\!/\!\!/}
\nc{\lievark}{\mathfrak{L}_n(\fk)}         \nc{\lievarc}{\mathfrak{L}_n(\CC)}        \nc{\lievarr}{\mathfrak{L}_n(\RR)}
\nc{\solvvark}{\mathfrak{R}_n(\fk)}        \nc{\solvvarc}{\mathfrak{R}_n(\CC)}       \nc{\solvvarr}{\mathfrak{R}_n(\RR)}
\nc{\nilvark}{\mathfrak{N}_n(\fk)}         \nc{\nilvarc}{\mathfrak{N}_n(\CC)}        \nc{\nilvarr}{\mathfrak{N}_n(\RR)}
\nc{\cirre}{\textrm{C}}
\nc{\fk}{\mathrm{k}}
\nc{\Ri}{\tfrac{4\Ric_{\mu}}{||\mu||^2}}
\nc{\ds}{\displaystyle}
\nc{\lb}{[\cdot,\cdot]}
\nc{\Hess}{\operatorname{Hess}}
\nc{\diag}{\operatorname{Diag}}   \nc{\Id}{\operatorname{Id}}
\nc{\trans}{\mbox{{\tiny$\operatorname{T}$}}}                         \nc{\Proj}{\operatorname{Proj}}
\nc{\Proy}{\operatorname{Proy}}
\nc{\ad}{\operatorname{ad}}       \nc{\Ad}{\operatorname{Ad}}        
\nc{\rank}{\operatorname{rank}}   \nc{\codim}{\operatorname{codim}}  
\nc{\Irr}{\operatorname{Irr}}     \nc{\End}{\operatorname{End}}
\nc{\Aut}{\operatorname{Aut}}     \nc{\Inn}{\operatorname{Inn}}
\nc{\lRad}{\operatorname{Rad}}
\nc{\Der}{\operatorname{Der}}     \nc{\Ker}{\operatorname{Ker}}
\nc{\Iso}{\operatorname{I}}       \nc{\Diff}{\operatorname{Diff}}
\nc{\Lie}{\operatorname{Lie}}     \nc{\tr}{\operatorname{tr}}
\nc{\dif}{\operatorname{d}}       \nc{\e}{\operatorname{e}}
\nc{\sen}{\operatorname{sen}}     \nc{\tang}{\operatorname{T}}
\nc{\modu}{\operatorname{mod}}
\nc{\Riem}{\operatorname{Rm}}     \nc{\Ric}{ {Ric}}
\nc{\sym}{\operatorname{sym}}     \nc{\symac}{\operatorname{sym^{ac}}}   \nc{\symc}{\operatorname{sym^{c}}}
\nc{\scalar}{\operatorname{sc}}
\nc{\grad}{\operatorname{grad}}
\nc{\ricci}{\operatorname{ric}}   \nc{\nr}{\operatorname{nr}}            \nc{\riccic}{\operatorname{ric^{c}}}
\nc{\riccig}{\operatorname{ric^{\gamma}}}
\nc{\Rin}{\operatorname{M}}
\nc{\Kill}{\operatorname{B}}
\nc{\Le}{\operatorname{L}}
\nc{\level}{\operatorname{level}} \nc{\rad}{\operatorname{r}}
\nc{\abel}{\operatorname{ab}}
\nc{\CH}{\operatorname{CH}}        \nc{\mcc}{\operatorname{mcc}}     \nc{\inte}{\operatorname{int}}         \nc{\aff}{\operatorname{Aff}}
\nc{\CaC}{\operatorname{CC}}        \nc{\ccm}{\operatorname{ccm}}
\nc{\Adj}{\operatorname{Adj}}
\nc{\Order}{\operatorname{O}} \nc{\Ricg}{\operatorname{Ric^{\gamma}}}
\nc{\Hom}{\operatorname{Hom}}
\nc{\sign}{\operatorname{sign}}
\nc{\spanv}{\operatorname{span}}
\nc{\xp}{\operatorname{xp}}   \nc{\xt}{\operatorname{xt}}
\nc{\IC}{\operatorname{IC}}   \nc{\OC}{\operatorname{OC}}
\nc{\rhov}{\operatorname{\rho_{v}}}
\nc{\mm}{m}
\nc{\mmt}{\widetilde{m}}
\nc{\F}{\operatorname{F}}
\theoremstyle{plain}
\newtheorem{theorem}{Theorem}[section]
\newtheorem{proposition}[theorem]{Proposition}
\newtheorem{corollary}[theorem]{Corollary}
\newtheorem{lemma}[theorem]{Lemma}
\theoremstyle{definition}
\newtheorem{definition}[theorem]{Definition}
\newtheorem{notation}[theorem]{Notation}
\theoremstyle{remark}
\newtheorem{remark}[theorem]{Remark}
\newtheorem{example}[theorem]{Example}
\numberwithin{equation}{section}
\begin{document}

\title{On distinguished orbits of reductive representations}
\author{EDISON ALBERTO FERN\'ANDEZ CULMA}
\address{Current affiliation: CIEM, FaMAF, Universidad Nacional de C\'ordoba, \newline \indent Ciudad Universitaria, \newline \indent (5000) C\'ordoba, \newline \indent Argentina}
\email{efernandez@famaf.unc.edu.ar}
\thanks{Fully supported by a CONICET fellowship (Argentina)}
\subjclass[2010]{Primary 22E45; Secondary 20G20; 22E25; \newline \indent \indent 13A50; 11E20; 57N16}
\keywords{Real Reductive representations, Real Reductive Lie groups, \newline \indent \indent Distinguished Orbits, The Convexity of the Moment Map, Ternary forms, \newline \indent \indent Compatible Metric for Geometric Structures}

\begin{abstract}
Let $\GrpG$ be a real reductive Lie group and let ${\tau}:{\GrpG} \longrightarrow \mathrm{GL}({V})$ be a real reductive representation of $\GrpG$ with (restricted) moment map $\mm_{\ggo}:V\smallsetminus\{ 0\} \longrightarrow \ggo$. In this work, we introduce the notion of \textit{nice space} of a real reductive representation to study the problem of how to determine if a $\GrpG$-orbit is \textit{distinguished} (i.e. it contains a critical point of the norm squared of $\mm_{\ggo}$). We give an elementary proof of the well-known convexity theorem of Atiyah-Guillemin-Sternberg in our particular case and we use it to give an easy-to-check sufficient condition for a $\GrpG$-orbit of a element in a nice space to be distinguished. In the case where $\GrpG$ is algebraic and $\tau$ is a rational representation, the above condition is also necessary (making heavy use of recent results of M. Jablonski), obtaining a generalization of Nikolayevsky's nice basis criterium. We also provide useful characterizations of nice spaces in terms of the weights of $\tau$. Finally, some applications to ternary forms and minimal metrics on nilmanifolds are presented.
\end{abstract}

\maketitle

\section{Introduction}

Let $\GrpGc=\U^{\CC}$ be a complex reductive group, $\widehat{\ggo}:= \Lie(\GrpGc)$ and ${\widehat{\tau}}:{\GrpGc} \longrightarrow \mathrm{GL}({\widehat{V}})$ be a representation of $\GrpGc$. In \cite{NESS1}, Linda Ness associates to $\widehat{\tau}$ a moment map (as in symplectic geometry) $\mm_{\widehat{\ggo}}:\widehat{V}\smallsetminus\{ 0\} \longrightarrow \sqrt{-1}\ug$ to study the orbit space of $\widehat{\tau}$ (here $\ug:=\Lie(\U)$). By considering a (real) inner product on $\sqrt{-1}\ug$ such that $\U$ acts by isometries via the adjoint representation, Ness's studies of the functional $||\mm_{\widehat{\ggo}}||^{2}$ not only recovered known results about the set of semi-stable vectors but she also proved that all critical points of $||\mm_{\widehat{\ggo}}||^2$ in the \textit{null cone} have similar properties to those of \textit{minimal points} (which have closed $\GrpGc$-orbit). The gradient flow of $||\mm_{\widehat{\ggo}}||^{2}$ was also studied independently by Francis Kirwan \cite{KIRWAN2} and the stratification given by such flow is often called the \textit{Kirwan-Ness stratification}.

Numerous efforts have been made over the next 25 years to carry out Kirwan-Ness results to the context of real Lie groups. This has been successfully achieved by Peter Heinzner, Gerald Schwarz and collaborators within their project of developing a geometric invariant theory for actions of real Lie groups on complex spaces (see \cite{HEINZNER1, HEINZNER3}). At this point, we should also mention the pioneering work of Roger Richardson and Peter Slodowy \cite{RICHARDSON1} on representations of real reductive algebraic groups, which was picked up by Patrick Eberlein and Michael Jablonski to extend \cite{KIRWAN2, NESS1} to this particular case (see \cite{EBERLEIN1, JABLONSKI4}). The idea of all the mentioned works is to use some connection with the complex case. In both cases, orbits of critical points of $||\mm_{\ggo}||^2$ play a distinguished role in the orbit space and it is for this reason that such orbits are called \textit{distinguished}.

Our purpose in this paper is to study the problem of how to determine if a $\GrpG$-orbit is distinguished, where $\GrpG$ is real reductive Lie group (in the sense of Heinzner-Schwarz) and the action is via a real reductive representation of $\GrpG$ (as it is defined in \cite{STOTZEL1}). In addition to the aforementioned motivation to study distinguished orbits, a second reason comes from the intriguing interplay between the Ricci flow on nilpotent Lie groups and the gradient flow of the norm squared of the moment map associated to the natural action of $\Glr$ on $V=\lamn$; the distinguished orbits of nilpotent Lie brackets are in 1-1 correspondence with \textit{nilsoliton metrics} on simply connected nilpotent Lie groups (see, for instance, \cite{LAURET6}). Nikolayevsky proves in \cite{NIKOLAYEVSKY2} many theorems on \textit{Einstein nilradicals} (nilpotent Lie algebras admitting a nilsoliton metric) by using the results given in \cite{RICHARDSON1}. Among these results, we can highlight the Nikolayevsky nice basis criterium, which provides an easy-to-check convex geometry condition for a nilpotent Lie algebra with a \textit{nice basis} to admit a nilsoliton metric.

By following Nikolayevsky's ideas given in \cite{NIKOLAYEVSKY1, NIKOLAYEVSKY2}, we introduce the notion of \textit{nice space} of a real reductive representation and we show that such criterium is a general fact of the theory of real reductive representations which are \textit{rational}. To do this, we give an elementary proof of the convexity of $\mm_{\ag}(\GrpA\cdot v)$ where $\GrpA$ is a connected abelian Lie group with out compact factor acting linearly and by symmetric operators on a finite dimensional real vector space $V$. This result is related with the convexity theorem of Atiyah-
Guillemin-Sternberg (\cite[Theorem 2]{ATIYAH1} and \cite[Theorem 5.2]{GUILLEMIN1})and indeed can be used to prove such theorem in the particular case of representations of a complexified torus.

In Section \ref{seccionsabernice}, we generalize some of the results of \cite{LAURETW2} and give an elementary characterization of a nice space in terms of weights of the representation (which is very helpful, as we will see). In Section \ref{seccionaplicaciones} we give some applications of our results to the study of ternary forms and the existence problem of minimal compatible metrics with geometric structures on nilpotent Lie groups (as are defined by Lauret in \cite{LAURET3}). Theorem \ref{straternary} provides an elementary expression for the \textit{stratifying set} associated with the natural action of $\mathrm{GL}_{3}(\RR)$ on ternary forms and Theorem \ref{3-4} gives a complete classification of distinguished orbits in the null cone of the $\mathrm{GL}_{3}(\RR)$-action on $\RR[x,y,z]_4$. Theorem \ref{nicegeneralizado} follows immediately from our main result and provides a tool to find minimal compatible metrics for a very wide family of class-$\gamma$ nilpotent Lie groups; we use it to determine minimal compatible metrics on symplectic two-step Lie algebras of dimension $6$ (see Theorem \ref{symtwo} and Table \ref{minmetricas}).

\section{Preliminaries}

Let $\GrpG$ be a real reductive Lie group in the sense of Heinzner-Schwarz, i.e. there exists a complex reductive group $\GrpGc=\U^{\CC}$ ($\GrpGc$ is the \textit{universal complexification} of a compact subgroup $\U$) such that $\GrpG$ is a closed subgroup of $\GrpGc$ and is \textit{compatible with the Cartan decomposition} $\GrpGc=\U\exp(\sqrt{-1}\ug)$, in other words, the function
\begin{eqnarray}\label{cartanDEC}
  \fna{\varphi}{\GrpK \times \pg }{ \GrpG }{(k,X)}{k\exp(X)}
\end{eqnarray}
is a diffeomorphism on $\GrpG$, where $\GrpK:=\GrpG \cap \U$ and $\pg := \ggo \cap \sqrt{-1}\ug$.

\begin{definition}\cite[Section 2]{STOTZEL1}
A representation  ${\tau}:{\GrpG} \longrightarrow \mathrm{GL}({V})$ of a real reductive Lie group is called \textit{real reductive representation} if $V$ is a $\GrpG$-invariant real subspace of a holomorphic $\GrpGc$-representation space $\widehat{V}$.
\end{definition}

It is fairly easy to see that given a real reductive representation, there exists a inner product on $V$, say $\ip$, such that $\GrpK$ acts by isometries and $\pg$ acts by symmetric operators. There also exists a inner product on $\ggo$, say $\ipd$, such that $\GrpK$ acts by isometries and $\pg$ acts by symmetric operators (via the adjoint representation) and $\ggo = \kg \, \ortsum \, \pg$ (it is a orthogonal decomposition). From now on, $\pi:=\dif \tau |_{e}:\ggo \longrightarrow \mathfrak{gl}(V)$ and orthogonal complements and orthogonal projections on $\ggo$ and $V$ are considered with respect $\ipd$ and $\ip$, respectively.

Let us fix a subalgebra $\ag$ of $\ggo$ maximal in $\pg$. Since $[\pg,\pg]\subseteq \kg$, it follows that $\ag$ is abelian. Thus, the families $\{\ad(X) | X \in \ag\}$ and $\{\pi(X) | X\in \ag\}$ are families of commuting symmetric operators, which allow a weight space decomposition for $\ggo$ and $V$; that is, there exist finite subsets $\Delta(\ggo)$ and $\Delta(V)$ of $\ag$, with $0 \nin \Delta(\ggo)$, such that
\begin{eqnarray}
\label{decGGO}  \ggo &=& \ggo_0 \, \ortsum \bigoplus^{\perp}_{\gamma \in \Delta(\ggo) } \ggo_{\gamma} , \\
\label{decV}    V &=& \bigoplus^{\perp}_{\alpha \in \Delta(V) } V_{\alpha}
\end{eqnarray}
where
\begin{eqnarray}
\nonumber \ggo_{\gamma}&=&\{Y\in \ggo : \ad(X)Y=\ipda{X}{\gamma}Y \mbox{ for all } X\in\ag \},\\
\nonumber  V_{\alpha}&=&\{v\in V : \pi(X)v=\ipda{X}{\alpha} v \mbox{ for all } X\in\ag\}.
\end{eqnarray}
The Decomposition (\ref{decGGO}) is often called \textit{restricted-root space decomposition} of $\ggo$ and the set $\Delta(\ggo)$ is called \textit{set of roots} of $\ggo$

Now, we consider the Cartan involution of $\ggo$ given by
\begin{equation}\label{resInvCartan}
\fna{\theta}{\kg \, \ortsum \, \pg}{\kg \,\ortsum \, \pg}{X+Y}{X-Y}, \, \forall X\in\kg, Y\in\pg.
\end{equation}
The following proposition summarizes the basic well-known properties of $\theta$, $\ip$ and $\ipd$ so we omit the proof.

\begin{proposition}\label{resumen}
Let $\GrpG$ be a real reductive Lie group, Let ${\tau}:{\GrpG} \longrightarrow \mathrm{GL}({V})$ be a real reductive representation with $\ip$ and $\ipd$ as above. Then, for all $X, Y$ in $\ggo$,
\begin{enumerate}
  \item $\pi(X)^{\trans} = -\pi(\theta(X))$  where $\pi(X)^{\trans}$ is the transpose operator of $\pi(X)$ with respect to $\ip$.
  \item $\ad(X)^{\trans} = -\ad(\theta(X))$ where $\ad(X)^{\trans}$ is the transpose operator of $\ad(X)$ with respect to $\ipd$.
  \item\label{descOrtgREAL} $\theta$ is an isometry of $\ipd$.
  \item $\theta(\ggo_{\lambda})=\ggo_{\theta(\lambda)}=\ggo_{-\lambda}$. Hence, $\lambda \in \Delta(\ggo)$ if and only if $-\lambda \in \Delta(\ggo)$.
  \item $[\ggo_{\lambda_1},\ggo_{\lambda_2}] \subseteq \ggo_{\lambda_1 + \lambda_2}$. Hence, if $[\ggo_{\lambda_1},\ggo_{\lambda_2}] \neq \{0\}$ then $\lambda_1 + \lambda_2 \in \Delta(\ggo)\bigcupdot \{0\}$.
  \item\label{sumaraizpesoR} $\pi(\ggo_{\lambda})V_{\alpha} \subseteq V_{\lambda + \alpha}$. Hence, if $\pi(\ggo_{\lambda})V_{\alpha} \neq \{0\}$ then $\lambda + \alpha \in \Delta(V)$.
 \item For all $Z \in \ggo_{\lambda}$, $[\theta(Z),Z] \in \ag$, thus $[\theta(Z),Z]=||Z||^2\lambda$.
\end{enumerate}
\end{proposition}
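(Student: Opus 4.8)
The plan is to establish items (1)--(7) one after another, using essentially only the orthogonal splitting $\ggo=\kg\,\ortsum\,\pg$, the hypotheses that $\GrpK$ acts by isometries and $\pg$ by symmetric operators (on both $V$ and $\ggo$), and routine weight-space bookkeeping. Items (1)--(3) are the metric ones. For (1), split $X=X_\kg+X_\pg$ along $\ggo=\kg\,\ortsum\,\pg$: since $\GrpK$ acts on $(V,\ip)$ by isometries, $\pi(X_\kg)$ is $\ip$-skew-symmetric, while $\pi(X_\pg)$ is $\ip$-symmetric by hypothesis, so $\pi(X)^{\trans}=-\pi(X_\kg)+\pi(X_\pg)=-\pi(\theta(X))$. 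Item (2) is the identical computation with $(\pi,\ip)$ replaced by $(\ad,\ipd)$ and $\tau$ by $\Ad$. For (3), since $\theta$ restricts to $\Id$ on $\kg$ and to $-\Id$ on $\pg$ and $\kg\perp\pg$, expanding $\ipda{\theta(X_\kg+X_\pg)}{\theta(Y_\kg+Y_\pg)}$ and cancelling cross terms recovers $\ipda{X_\kg+X_\pg}{Y_\kg+Y_\pg}$.

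For (4)--(6) I would first record two standard preliminaries: that $\pi$ is a Lie algebra homomorphism, and that $\theta$ is a Lie algebra automorphism of $\ggo$ (checked on $[\kg,\kg]\subseteq\kg$, $[\kg,\pg]\subseteq\pg$, $[\pg,\pg]\subseteq\kg$). Then (5) is the Jacobi identity: for $Y_i\in\ggo_{\lambda_i}$ and $X\in\ag$, $\ad(X)[Y_1,Y_2]=[\ad(X)Y_1,Y_2]+[Y_1,\ad(X)Y_2]=\ipda{X}{\lambda_1+\lambda_2}\,[Y_1,Y_2]$, so $[Y_1,Y_2]\in\ggo_{\lambda_1+\lambda_2}$; (6) is the same computation using $[\pi(X),\pi(Y)]=\pi([X,Y])$: for $Y\in\ggo_\lambda$, $v\in V_\alpha$, $\pi(X)(\pi(Y)v)=\pi(Y)(\pi(X)v)+\pi([X,Y])v=\ipda{X}{\lambda+\alpha}\,\pi(Y)v$. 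For (4), since $\lambda\in\ag\subseteq\pg$ we have $\theta(\lambda)=-\lambda$, and for $Y\in\ggo_\lambda$, $X\in\ag$, using that $\theta$ is an automorphism, $\ad(X)\theta(Y)=\theta\bigl([\theta(X),Y]\bigr)=\theta\bigl([-X,Y]\bigr)=\ipda{X}{-\lambda}\,\theta(Y)$, whence $\theta(\ggo_\lambda)\subseteq\ggo_{-\lambda}$, with equality since $\theta^2=\Id$. The clause ``$\lambda\in\Delta(\ggo)$ iff $-\lambda\in\Delta(\ggo)$'' and the ``Hence'' parts of (5)--(6) then follow immediately from the definitions of $\Delta(\ggo)$ and $\Delta(V)$.

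For (7) there are two stages. First, membership in $\ag$: by (4) $\theta(Z)\in\ggo_{-\lambda}$, so $[\theta(Z),Z]\in[\ggo_{-\lambda},\ggo_\lambda]\subseteq\ggo_0$ by (5); on the other hand $\theta\bigl([\theta(Z),Z]\bigr)=[Z,\theta(Z)]=-[\theta(Z),Z]$ places $[\theta(Z),Z]$ in the $(-1)$-eigenspace $\pg$ of $\theta$, hence $[\theta(Z),Z]\in\ggo_0\cap\pg$. But $\ggo_0\cap\pg$ is the centralizer of $\ag$ in $\pg$, and it equals $\ag$: if $X\in\pg$ commutes with all of $\ag$, then $\ag+\RR X$ is an abelian (hence a) subalgebra of $\ggo$ contained in $\pg$, so $X\in\ag$ by maximality of $\ag$ in $\pg$. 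Second, the value: pairing with an arbitrary $X\in\ag$ and using (2) together with $\theta^2=\Id$, $\ipda{[\theta(Z),Z]}{X}=\ipda{\ad(\theta(Z))Z}{X}=-\ipda{Z}{\ad(Z)X}=\ipda{Z}{\ad(X)Z}=\ipda{X}{\lambda}\,||Z||^2=\ipda{||Z||^2\lambda}{X}$; since $[\theta(Z),Z]$ and $||Z||^2\lambda$ both lie in $\ag$ and $\ipd$ restricts to an inner product there, they coincide.

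The one non-formal ingredient is the identification $\ggo_0\cap\pg=\ag$ inside (7): it is the sole use of the maximality of $\ag$, and the only step requiring a moment's care is observing that adjoining a centralizing element to $\ag$ produces an honest \emph{subalgebra} (it does, being abelian). All the remaining assertions are mechanical unwindings of the weight-space definitions, the Jacobi identity, and the transpose formulas (1)--(2).
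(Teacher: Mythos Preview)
Your proof is correct in every item; the arguments are the standard ones and the only slightly delicate point---the identification $\ggo_0\cap\pg=\ag$ via maximality of $\ag$---is handled properly. Note that the paper itself omits the proof of this proposition, declaring the properties ``basic well-known,'' so there is no authorial argument to compare against; your write-up simply supplies the details the paper leaves to the reader.
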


By fixing a \textit{regular element} $X_0\in \ag$ (i.e. $\ipda{\gamma}{X_0}\neq 0$ for every $\gamma \in \Delta(\ggo)$), we obtain a set of positive roots
\begin{equation}\label{proots}
\Delta(\ggo)^{+}=\{\gamma \in \Delta({\ggo}) : \ipda{\gamma}{X_0}>0\}
\end{equation}
and a Weyl chamber
\begin{equation}\label{wchamber}
\ag^{+}=\{Y\in \ag : \ipda{\gamma}{Y}>0 \mbox{ for every } \gamma \in \Delta({\ggo})^{+}\}.
\end{equation}
If $\Delta(\ggo)^{-}:=-\Delta(\ggo)^{+}$, then $\Delta(\ggo)=\Delta(\ggo)^{+} \bigcupdot \Delta(\ggo)^{-}$ and $\theta(\Delta(\ggo)^{+})=\Delta(\ggo)^{-}$.

We are now in a position to define the moment map of $\tau$. This map is implicitly defined by
\begin{equation}\label{momentmap}
\begin{array}{rcl}
  \mm_{\ggo}:V\setminus \{0\}&\longrightarrow &\ggo  \\
\ipda{\mm_{\ggo}(v)}{X} &=& \frac{1}{||v||^2}\ipa{\pi(X)v}{v},
\end{array}
\end{equation}
for all $X\in\pg$ and $v\in V\smallsetminus\{ 0\}$.
It is easy to see that $\mm_{\ggo}$ is $\GrpK$-equivariant (i.e. $\mm_{\ggo}(k\cdot v)=\Ad(k)(\mm_{\ggo}(v))$ for all $k\in \GrpK$ and $v \in V\smallsetminus\{ 0\}$) and $\RR^{*}$-invariant (i.e. $\mm_{\ggo}(\lambda v)=\mm_{\ggo}(v)$ for all $\lambda \in \RR^{*}$ and $v \in V\smallsetminus\{ 0\}$). Also, we note that the image of $\mm_{\ggo}$ is contained in $\pg$, which is clear from the $\GrpK$-invariance of $\ip$.

Set $\F_{\ggo}=||\mm_{\ggo}||^2$, which we shall call the \textit{norm squared of the moment map}. As we said above, the critical points of $\F_{\ggo}$ play a important role in the study of the orbit space. We thus have the following definition.

\begin{definition}\cite[Definition 2.6]{JABLONSKI2}
 An orbit $\GrpG \cdot v$ is said to be \textit{$\GrpG$-distinguished} if $\GrpG \cdot v$ has a critical point of $\F_{\ggo}$.
\end{definition}

\begin{theorem}\label{stratification}$\mbox{ }$\hfill
\begin{enumerate}
\item \label{teocrit}\cite[Corollaries 6.10, 6.11]{HEINZNER3}
  For each $v\in V \smallsetminus\{ 0\}$, the following conditions are equivalent:
\begin{enumerate}
  \item $v$ is a critical point of $\F_{\ggo}$.
  \item $\pi(\mm_{\ggo}(v))v = \lambda v$ for some $\lambda \in \RR$.
  \item $v$ is a global minimum of $\F_{\ggo}|_{\GrpG\cdot v}$.
\end{enumerate}
Under these conditions, the set of critical points of $\F_{\ggo}$ lying in $\GrpG \cdot v$ equals $\GrpK \cdot v$ (up to scaling).
\item\label{defestratos} \cite[Theorem 7.3 y Corollary 7.6]{HEINZNER3} (\textbf{Kirwan-Ness Stratification}) The gradient flow of $\F_{\ggo}$ determines a stratification of $V \setminus \{0\}$, it is to say that
        $$
        V \setminus \{0\} = \bigcupdot_{\beta \in \overline{\ag^{+}}}\mathscr{S}_{\beta},
        $$
         where a \textit{stratum} $\mathscr{S}_{\beta}$ is given by
        $$
        \mathscr{S}_{\beta} = \left\{v \in  V \setminus \{0\} : \lim_{t \to -\infty} s_{v}(t) \in \mathscr{C}(\beta)\right\}.
        $$
        Here, $s_{v}(t)$ is the integral curve through $v$ of the gradient field of $\F_{\ggo}$ and $\mathscr{C}(\beta)$ is the set
        $$
        \mathscr{C}(\beta)=\{v \in V \setminus \{0\} : v \mbox{ is a critical point of } \F_{\ggo} \mbox{ and } \mm_{\ggo}(v) \in \Ad(\GrpK)\beta\}.
        $$
        The strata satisfy
        $$
        \overline{\mathscr{S}_{\beta}} \subseteq {\mathscr{S}_{\beta}} \, \cupdot \, \{ \mathscr{S}_{\beta '} \, : \, ||\beta '|| > ||\beta ||\}.
        $$
    \item \cite[Theorem 5.4]{HEINZNER3} For all $v \in \mathscr{S}_{\beta} $, $\overline{\GrpG \cdot v} \cap \mathscr{S}_{\beta}$ contains a unique distinguished orbit.
\end{enumerate}
\end{theorem}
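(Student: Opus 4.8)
The plan is to establish the three items by reducing everything to the behaviour of $\F_\ggo$ along the curves $t\mapsto\exp(tX)\cdot v$, $X\in\pg$; these statements are classical in the complex case (Ness, Kirwan) and the quoted references transfer them to the real-reductive setting. \textbf{For part (1):} a routine computation of the gradient of $\F_\ggo$ with respect to $\ip$ --- using only (\ref{momentmap}), the fact that $\mm_\ggo(v)\in\pg$, and $\pi(X)^{\trans}=\pi(X)$ for $X\in\pg$ (Proposition \ref{resumen}(1) with $\theta(X)=-X$) --- yields
\[
\grad\F_\ggo(v)=\frac{4}{\|v\|^{2}}\bigl(\pi(\mm_\ggo(v))v-\F_\ggo(v)\,v\bigr),
\]
so (a)$\Leftrightarrow$(b) is immediate, with $\lambda=\F_\ggo(v)$. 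For (c)$\Rightarrow$(a): if $v$ minimizes $\F_\ggo$ on $\GrpG\cdot v$ then $\grad\F_\ggo(v)\perp\pi(\ggo)v$, while $\RR^{*}$-invariance gives $\grad\F_\ggo(v)\perp\RR v$; since the displayed formula places $\grad\F_\ggo(v)$ inside $\pi(\ggo)v+\RR v$, it must vanish. For (a)$\Rightarrow$(c) and the description of the critical set: via (\ref{cartanDEC}) and $\GrpK$-invariance of $\F_\ggo$ reduce to a curve $w(t)=\exp(tX)v$, $X\in\pg$; the Kempf--Ness function $\varphi(t)=\log\|\exp(tX)v\|^{2}$ is convex (Cauchy--Schwarz in an eigenbasis of $\pi(X)$) with $\varphi'(t)=2\,\ipda{\mm_\ggo(w(t))}{X}$, and the Ness minimality argument built on these facts gives $\F_\ggo(w(t))\ge\F_\ggo(v)$, while strict convexity away from directions with $\pi(X)v\in\RR v$ forces the critical set inside $\GrpG\cdot v$ to be a single $\GrpK$-orbit, up to scaling.

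\textbf{For part (2):} one analyses the negative gradient flow of $\F_\ggo$, in four steps. (i) Long-time existence. (ii) Convergence of $s_v(t)$ as $t\to-\infty$ to a critical point $w(v)$; here one invokes real-analyticity of $\F_\ggo$ (a {\L}ojasiewicz-gradient-inequality argument) or Kirwan's original compactness argument. (iii) The value $\mm_\ggo(w(v))$ is $\Ad(\GrpK)$-conjugate to a unique $\beta\in\overline{\ag^{+}}$ (using the weight decomposition (\ref{decV}), the chamber (\ref{wchamber}), and characterization (b) of part (1)), which makes the stratification well defined and $\GrpG$-invariant (from $\GrpK$-equivariance of $\mm_\ggo$ and $\GrpG=\GrpK\exp(\pg)$). (iv) The closure relation, which follows from monotonicity of $\F_\ggo$ along the flow (whence $\F_\ggo\ge\|\beta\|^{2}$ on $\mathscr{S}_\beta$, with equality only on $\mathscr{C}(\beta)$) together with upper semicontinuity of $v\mapsto\|\mm_\ggo(w(v))\|$.

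\textbf{For part (3):} fix $v\in\mathscr{S}_\beta$. For existence, flow $v$ down to $w(v)\in\mathscr{C}(\beta)$ and show $\GrpG\cdot w(v)\subseteq\overline{\GrpG\cdot v}$ by a Kempf-type argument using the one-parameter subgroup $\exp(t\beta)$ and its associated limit subspace (on which a smaller reductive group acts). For uniqueness, note that by the closure relation $\F_\ggo\ge\|\beta\|^{2}$ throughout $\overline{\GrpG\cdot v}$, with equality only on the $\F_\ggo$-minimizing set of a given orbit; hence any distinguished orbit in $\overline{\GrpG\cdot v}\cap\mathscr{S}_\beta$ lies in $\mathscr{C}(\beta)$, and comparing such an orbit with $\GrpG\cdot w(v)$ through $\exp(t\beta)$ (again the Kempf construction), combined with the equivalence (a)$\Leftrightarrow$(c) of part (1), forces the two orbits to coincide.

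\textbf{Main obstacle.} The hard part is step (2)(ii)--(iii): long-time existence and convergence of the gradient flow, and the labelling of the strata by $\overline{\ag^{+}}$, in the real-reductive category --- together with the Kempf-type orbit-closure statement underlying part (3). In the complex case these are exactly the Kirwan--Ness results; over $\RR$ they rest on the geometric-invariant-theory machinery of Heinzner--Schwarz (or on a careful transfer to the complexification $\GrpGc$), which is precisely why the theorem is quoted here rather than re-derived from scratch.
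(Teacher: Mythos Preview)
The paper does not prove this theorem at all: each item is stated with an explicit citation to \cite{HEINZNER3} (Corollaries~6.10--6.11, Theorem~7.3 and Corollary~7.6, and Theorem~5.4, respectively), and the statement is used throughout the paper purely as a black-box input from the literature. There is therefore no ``paper's own proof'' to compare your proposal against.

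That said, your sketch is a faithful outline of the strategy actually carried out in the cited reference, and you correctly identify at the end that the genuine content --- long-time existence and convergence of the negative gradient flow in the real-reductive setting, the labelling of strata by $\overline{\ag^{+}}$, and the Kempf-type orbit-closure arguments --- is precisely what the Heinzner--Schwarz--St\"otzel machinery supplies and what the paper deliberately does not reproduce. One small caveat: your gradient computation and the (a)$\Leftrightarrow$(b) argument in part~(1) are fine and match what the cited paper does; your sketch of (a)$\Rightarrow$(c) via convexity of $t\mapsto\log\|\exp(tX)v\|^{2}$ is the standard Kempf--Ness idea, but getting from there to the actual minimality of $\F_\ggo$ (not of the norm) along the orbit requires an additional slicing argument (the ``Ness lemma''), which you allude to but do not spell out. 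This is not a gap so much as an acknowledgment that the proof is nontrivial, which is again why the paper simply cites it.
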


Let us denote by $\mathscr{B}$ the set of all $\beta \in \overline{\ag^{+}}$ such that $\mathscr{S}_{\beta} \neq \phi$ or, what is the same thing,
$$
\mathscr{B} = \{\beta \in \overline{\ag^{+}} : \mathscr{C}(\beta) \neq \phi \}.
$$
We call $\mathscr{B}$ the \textit{stratifying set} of $V$.

\begin{notation}
Let $\Phi$ be a finite subset of $\ag$. The convex hull of $\Phi$ will be denoted by $\CH (\Phi)$ while by $\aff(\Phi)$ we denote the affine space generated by $\Phi$. $\mcc (\Phi)$ denotes the \textit{minimal convex combination} of $\Phi$; i.e. the unique vector closest to the origin in $\CH (\Phi)$. The notation $\inte(\CH(\Phi))$ represents the interior of $\CH(\Phi)$ relative to the usual topology of $\aff(\Phi)$.

Fix $v \in V\smallsetminus\{ 0\}$, say $v=v_{1} + \ldots + v_{s}$ with each $v_i \in V_{\alpha_i}$ (by Equation (\ref{decV})). Let $\mathfrak{R}(v)$ denote the ordered set of weights related with $v$:
\begin{equation}\label{WRel}
    \mathfrak{R}(v)=\{ \alpha_i \in \Delta({V}) : v_i\neq 0 \}
\end{equation}
 and set $\beta_{v}=\mcc(\mathfrak{R}(v))$
\end{notation}\label{stratumdef}

We note that the set $\{\beta_v : v\in  V\smallsetminus\{ 0\}  \}$ is a finite set.

\begin{proposition}\label{propfinalCAP1}$\mbox{ }$\hfill
\begin{enumerate}
\item \label{proyMM}\label{mmsubcompatible}Let $\GrpH$ be a subgroup of $\GrpG$ compatible whit the Cartan decomposition of $\GrpG$ given by Equation (\ref{cartanDEC}). Then $\mm_{\hg}=\Proy_{\hg} \circ \mm_{\ggo}$, where $\Lie(\GrpH)=\hg$ and $\Proy_{\hg}:\ggo \longrightarrow \hg$ is the orthogonal projection of $\ggo$ on $\hg$ with respect to $\ipd$.
\item \label{criticocap} Let $v \in V\setminus \{0\}$ such that $\mm_{\ggo}(v)=\beta \in \ag$. Then
    \begin{enumerate}
      \item $\mm_{\ggo}(v) \in \inte(\CaC(\mathfrak{R}(v)))$.
      \item $v$ is a critical point of $\F_{\ggo}$ if and only if $\beta=\mcc(\mathfrak{R}(v))$.
    \end{enumerate}
\item $\mathscr{B}$ is a finite subset of $\ag$.
\end{enumerate}
\end{proposition}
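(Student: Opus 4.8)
The plan is to prove the three items in the order (1), (2), (3): the equivalence in (2)(b) will rest on the explicit formula for $\mm_{\ggo}(v)$ produced in (2)(a), and (3) will follow at once from (2)(b) together with the finiteness of $\{\beta_{w} : w\in V\smallsetminus\{0\}\}$ already noted. For item (1), compatibility of $\GrpH$ with (\ref{cartanDEC}) gives the $\ipd$-orthogonal decomposition $\hg=\hg_{\kg}\,\ortsum\,\hg_{\pg}$ with $\hg_{\kg}=\hg\cap\kg$ and $\hg_{\pg}=\hg\cap\pg$. First I would check that $\ip$ and $\ipd|_{\hg}$ are admissible for the reduced moment map $\mm_{\hg}$ of $\tau|_{\GrpH}$ (indeed $\ad(X)|_{\hg}$ is skew-symmetric for $X\in\hg_{\kg}$ and symmetric for $X\in\hg_{\pg}$, since $\hg$ is $\ad(\hg)$-invariant), so that $\mm_{\hg}(v)$ is the unique element of $\hg_{\pg}$ with $\ipda{\mm_{\hg}(v)}{X}=\tfrac{1}{||v||^{2}}\ipa{\pi(X)v}{v}$ for all $X\in\hg_{\pg}$. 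Then I would observe that $\Proy_{\hg}$ carries $\pg$ into $\hg_{\pg}$: for $Y\in\pg$, $a\in\hg_{\kg}$, $b\in\hg_{\pg}$ one has $||Y-a-b||^{2}=||a||^{2}+||Y-b||^{2}$, which is minimized only at $a=0$. Hence $\Proy_{\hg}(\mm_{\ggo}(v))\in\hg_{\pg}$, and for $X\in\hg_{\pg}\subseteq\hg$ the projection identity gives $\ipda{\Proy_{\hg}(\mm_{\ggo}(v))}{X}=\ipda{\mm_{\ggo}(v)}{X}=\tfrac{1}{||v||^{2}}\ipa{\pi(X)v}{v}$ by (\ref{momentmap}); uniqueness then forces $\mm_{\hg}=\Proy_{\hg}\circ\mm_{\ggo}$.

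For item (2), write $v=v_{1}+\cdots+v_{s}$ with $0\neq v_{i}\in V_{\alpha_{i}}$ and the $\alpha_{i}\in\mathfrak{R}(v)$ pairwise distinct, so that the $v_{i}$ are linearly independent. Testing (\ref{momentmap}) against an arbitrary $X\in\ag\subseteq\pg$, using $\pi(X)v_{i}=\ipda{X}{\alpha_{i}}v_{i}$ and the orthogonality in (\ref{decV}), I get $\ipda{\beta}{X}=\ipda{X}{\sum_{i}c_{i}\alpha_{i}}$ with $c_{i}:=||v_{i}||^{2}/||v||^{2}>0$ and $\sum_{i}c_{i}=1$ (the latter by orthogonality of the weight spaces); since $\beta$ and $\sum_{i}c_{i}\alpha_{i}$ both lie in $\ag$, this forces $\beta=\sum_{i}c_{i}\alpha_{i}$, a convex combination of $\mathfrak{R}(v)$ with all coefficients strictly positive, which is exactly statement (a). For (b), Theorem \ref{stratification}(\ref{teocrit}) says $v$ is critical iff $\pi(\beta)v=\lambda v$ for some $\lambda\in\RR$, and since the $v_{i}$ lie in distinct weight spaces this amounts to $\ipda{\beta}{\alpha_{i}}=\lambda$ for every $i$. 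If this holds, then since $\beta=\sum_{i}c_{i}\alpha_{i}$, multiplying by $c_{i}$ and summing gives $||\beta||^{2}=\lambda$, so $\ipda{\beta}{\alpha_{i}}=||\beta||^{2}$ for all $i$; consequently every $\gamma\in\CaC(\mathfrak{R}(v))$ satisfies $\ipda{\gamma}{\beta}=||\beta||^{2}$, whence $||\gamma-\beta||^{2}=||\gamma||^{2}-||\beta||^{2}\geq0$ and $\beta=\mcc(\mathfrak{R}(v))$. Conversely, if $\beta=\mcc(\mathfrak{R}(v))$, the first-order optimality condition for the nearest point to $0$ in $\CaC(\mathfrak{R}(v))$ yields $\ipda{\beta}{\alpha_{i}}\geq||\beta||^{2}$ for all $i$, and combining this with $||\beta||^{2}=\sum_{i}c_{i}\ipda{\beta}{\alpha_{i}}$ and $c_{i}>0$ forces $\ipda{\beta}{\alpha_{i}}=||\beta||^{2}$ for every $i$; hence $\pi(\beta)v=||\beta||^{2}v$ and $v$ is critical by Theorem \ref{stratification}(\ref{teocrit}).

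For item (3), $\mathscr{B}\subseteq\overline{\ag^{+}}\subseteq\ag$ holds by definition, so only finiteness requires an argument. Given $\beta\in\mathscr{B}$, choose a critical point $v$ of $\F_{\ggo}$ with $\mm_{\ggo}(v)\in\Ad(\GrpK)\beta$; since $\mm_{\ggo}$ is $\GrpK$-equivariant, $\F_{\ggo}$ is $\GrpK$-invariant and its critical set is $\GrpK$-stable, so after replacing $v$ by $k\cdot v$ for a suitable $k\in\GrpK$ we may assume $\mm_{\ggo}(v)=\beta\in\ag$ with $v$ still critical. Item (2)(b) then gives $\beta=\mcc(\mathfrak{R}(v))=\beta_{v}$, so $\mathscr{B}\subseteq\{\beta_{w}:w\in V\smallsetminus\{0\}\}$, which is finite because $\mathfrak{R}(w)$ ranges only over the subsets of the finite set $\Delta(V)$. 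This proves (3).

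Items (1) and (3) are essentially bookkeeping with the defining property (\ref{momentmap}) of the moment map and with $\GrpK$-equivariance; the step needing genuine care is the equivalence in (2)(b). There one must invoke the variational (projection) characterization of the minimal convex combination, and it is essential to have shown in (2)(a) that \emph{every} weight of $\mathfrak{R}(v)$ occurs with strictly positive coefficient in $\beta$, so that the support of $\mcc(\mathfrak{R}(v))$ is all of $\mathfrak{R}(v)$ and the equality $\ipda{\beta}{\alpha_{i}}=||\beta||^{2}$ is forced for each $i$ and not merely for some of them.
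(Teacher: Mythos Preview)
The paper does not supply a proof of Proposition~\ref{propfinalCAP1}; it is listed among the preliminaries as a collection of elementary facts and left unproved. Your argument is correct and proceeds along the standard lines one would expect: item~(1) follows from the implicit definition (\ref{momentmap}) together with the orthogonality $\kg\perp\pg$; item~(2)(a) by testing (\ref{momentmap}) against $X\in\ag$ to realize $\beta$ as a strictly positive convex combination of $\mathfrak{R}(v)$; item~(2)(b) by combining Theorem~\ref{stratification}(\ref{teocrit}) with the variational (KKT) characterization of the nearest point $\mcc(\mathfrak{R}(v))$ and the positivity of the coefficients from~(a); and item~(3) by $\GrpK$-equivariance and~(2)(b). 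Since there is no proof in the paper to compare against, there is nothing further to add.
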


\section{Generalization of Nikolayevsky's nice basis criterium}

In this section, we formulate and prove a generalization of Nikolayevsky's nice basis criterium (\cite[Theorem 3.]{NIKOLAYEVSKY2}). We begin with an elementary proof of the convexity of $\mm_{\ag}(\GrpA \cdot v)$ where $\GrpA$ is a connected abelian Lie group without compact factor acting linearly by symmetric operators on a real vector space $V$ (with respect some inner product $\ip$ on $V$).

\begin{proposition}\label{mmAorbit}
For every $v\in V\smallsetminus\{ 0\}$, $v=v_1+\ldots+v_s$ (as in Decomposition (\ref{decV}))
\begin{equation*}
    \mm_{\ag}(\GrpA \cdot v) = \inte(\CH(\mathfrak{R}(v))).
\end{equation*}
\end{proposition}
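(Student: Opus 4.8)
The plan is to reduce the statement to a concrete computation with a log-sum-exp convex function on $\ag$. Since $\GrpA$ is connected and abelian, $\exp\colon\ag\to\GrpA$ is surjective, so $\GrpA\cdot v=\{\exp(X)\cdot v:X\in\ag\}$; and as $\{\pi(X):X\in\ag\}$ is a commuting family of symmetric operators, we have a weight decomposition $V=\bigoplus^{\perp}_{\alpha}V_{\alpha}$ on which $\exp(X)$ acts by the scalar $e^{\ipda{X}{\alpha}}$ on $V_{\alpha}$. Writing $v=v_1+\dots+v_s$ with $0\neq v_i\in V_{\alpha_i}$ and substituting $w=\exp(X)\cdot v$ into the defining identity $\ipda{\mm_{\ag}(w)}{Y}=\|w\|^{-2}\ipa{\pi(Y)w}{w}$, one obtains
$$
\mm_{\ag}(\exp(X)\cdot v)=\sum_{i=1}^{s}t_i(X)\,\alpha_i,\qquad t_i(X)=\frac{e^{2\ipda{X}{\alpha_i}}\|v_i\|^2}{\sum_{j}e^{2\ipda{X}{\alpha_j}}\|v_j\|^2}>0,\quad\sum_i t_i(X)=1.
$$
Thus every point of $\mm_{\ag}(\GrpA\cdot v)$ is a strictly positive convex combination of the elements of $\mathfrak{R}(v)$, hence belongs to $\inte(\CH(\mathfrak{R}(v)))$ by the standard description of the relative interior of the convex hull of a finite set. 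This proves the inclusion ``$\subseteq$''.

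For ``$\supseteq$'', fix $\beta\in\inte(\CH(\mathfrak{R}(v)))$ and write $\beta=\sum_i s_i\alpha_i$ with $s_i>0$ and $\sum_i s_i=1$. Consider the smooth convex function
$$
h\colon\ag\longrightarrow\RR,\qquad h(X)=\log\!\Big(\sum_{i}e^{2\ipda{X}{\alpha_i}}\|v_i\|^2\Big)-2\ipda{X}{\beta},
$$
whose gradient, by the formula above, is $\grad h(X)=2\bigl(\mm_{\ag}(\exp(X)\cdot v)-\beta\bigr)$; thus any critical point $X_0$ of $h$ satisfies $\mm_{\ag}(\exp(X_0)\cdot v)=\beta$, which is exactly what we want. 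Since $\mm_{\ag}(\exp(X)\cdot v)$ and $\beta$ both lie in $\CH(\mathfrak{R}(v))\subseteq\aff(\mathfrak{R}(v))$, the vector $\grad h(X)$ always lies in the linear subspace $W:=\spanv\{\alpha_i-\alpha_j\}$, the direction space of $\aff(\mathfrak{R}(v))$. Hence the gradient of the restriction $h|_{W}$ coincides with $\grad h$ at every point of $W$, and it suffices to produce a minimum of $h|_{W}$.

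That minimum exists because $h|_{W}$ is coercive. For a unit vector $u\in W$ and $r\ge 0$, bounding the inner sum below by its largest term gives $h(ru)\ge 2r\,c(u)+\min_i\log\|v_i\|^2$, where $c(u):=\max_i\ipda{u}{\alpha_i}-\ipda{u}{\beta}$; and $c(u)>0$, since $\ipda{u}{\beta}=\sum_i s_i\ipda{u}{\alpha_i}$ is a convex combination with strictly positive weights of the numbers $\ipda{u}{\alpha_i}$, so it equals their maximum only when all of them coincide, i.e. only when $u\perp W$, which with $u\in W$ forces $u=0$. By compactness of the unit sphere of $W$ we get $c(u)\ge c_0>0$ there, so $h(X)\to+\infty$ as $\|X\|\to\infty$ within $W$. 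Being smooth and coercive, $h|_{W}$ attains a minimum at some $X_0\in W$, where $0=\grad(h|_{W})(X_0)=\grad h(X_0)$, so $\mm_{\ag}(\exp(X_0)\cdot v)=\beta$; as $\exp(X_0)\in\GrpA$, this yields $\beta\in\mm_{\ag}(\GrpA\cdot v)$ and completes the proof.

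The step I expect to cost the most care is the reduction to $W$. One cannot minimize $h$ over all of $\ag$: it is constant along $\spanv(\mathfrak{R}(v))^{\perp}$, and, when $0\notin\aff(\mathfrak{R}(v))$, constant along an additional line inside $\spanv(\mathfrak{R}(v))$ as well, so $h$ has no minimizer in general. The gradient lands in $W$ merely because $\beta\in\CH(\mathfrak{R}(v))$, but the coercivity of $h|_{W}$ — the crux — is precisely the place where one uses that $\beta$ lies in the \emph{relative interior} of $\CH(\mathfrak{R}(v))$ and not just in $\CH(\mathfrak{R}(v))$: for $\beta$ on the relative boundary, $c(u)$ would vanish for some $u\neq 0$, and indeed such a $\beta$ lies only in the closure of $\mm_{\ag}(\GrpA\cdot v)$.
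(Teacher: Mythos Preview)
Your argument is correct and follows a genuinely different route from the paper's. Both proofs begin identically, deriving the explicit formula $\mm_{\ag}(\exp(X)v)=\sum_i t_i(X)\alpha_i$ with strictly positive barycentric weights; and both eventually restrict attention to the same subspace, since the paper's $\widetilde{\bg}=\dg^{\perp}$ (with $\dg=\{Y:\pi(Y)v=\lambda v\}$) coincides with your $W=\spanv\{\alpha_i-\alpha_j\}$. The divergence is in the ``$\supseteq$'' direction. The paper argues globally: it introduces $\rho_{\bg}(X)=\ln\|\exp(X)v\|^2$, verifies that its Hessian is positive definite on $\widetilde{\bg}$, and invokes Fenchel's Convexity Theorem to conclude that the image of $\dif\rho_{\widetilde{\bg}}=2\mm_{\widetilde{\bg}}$ is open and convex; it then shows that every exposed point of $\CH(\mathfrak{R}(v))$ is a limit of the image, so the closure equals the full polytope and hence the image is its relative interior. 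Your approach is instead pointwise: for each target $\beta$ you subtract the linear term $2\ipda{X}{\beta}$, show the resulting $h|_{W}$ is coercive precisely because $\beta$ is in the relative interior, and obtain the preimage as a minimizer. Your route is more self-contained---needing only elementary calculus and compactness rather than Fenchel duality and the exposed-point machinery---while the paper's route yields, for free, the additional structural information that $X\mapsto\mm_{\ag}(\exp(X)v)$ is a diffeomorphism from $\widetilde{\bg}$ onto $\inte(\CH(\mathfrak{R}(v)))$, and ties the result explicitly to the convexity-theorem literature it is meant to parallel.
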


\begin{proof}
Fix $X\in \ag$. Let us first find an expression for $\mm_{\ag}(\exp(X)\cdot v)$ in terms of weights. Let $Y\in \ag$
\begin{eqnarray*}
  \ipda{\mm_{\ag}(\exp(X)\cdot v)}{Y} &=& \frac{1}{||\exp(X)\cdot v||^2}\ipa{\pi(Y)\exp(X)\cdot v}{\exp(X)\cdot v} \\
    &=& \frac{1}{||\exp(X)\cdot v||^2} \ipa{\sum \e^{\ipda{X}{\alpha_i}}\pi(Y)v_i}{\sum \e^{\ipda{X}{\alpha_j}}v_j}\\
    &=& \frac{1}{||\exp(X)\cdot v||^2} \sum \e^{2\ipda{X}{\alpha_i}}||v_i||^2\ipda{Y}{\alpha_i}\\
    &=& \ipda{\frac{1}{||\exp(X)\cdot v||^2}\sum \e^{2\ipda{X}{\alpha_i}}||v_i||^2 \alpha_i}{Y}.
\end{eqnarray*}
This gives
\begin{equation}\label{mmAform}
\mm_{\ag}(\exp(X)v) = \frac{1}{\sum \e^{2\ipda{X}{\alpha_i}}||v_i||^2}\sum\e^{2\ipda{X}{\alpha_i}}||v_i||^2 \alpha_i
\end{equation}
and so $\mm_{\ag}(\GrpA \cdot v) \subseteq \inte(\CH(\mathfrak{R}(v)))$.

We next prove that $\mm_{\ag}( \GrpA \cdot v)$ is a open convex set in $\aff(\mathfrak{R}(v))$. Let $\bg \subseteq \ag$ be a subspace of $\ag$ and let $\rho_{\bg}$ be given by
\begin{equation*}
    \fna{\rho_{\bg}}{\bg}{\RR}{X}{\ln||\exp(X)\cdot v||^2}.
\end{equation*}
By using that $\bg$ is abelian, a straightforward computation gives
$$
\mm_{\bg}(\exp(Y)\cdot v) = \frac{1}{2}(\dif \rho_{\bg})_{Y}.
$$
Therefore the image of $\dif \rho_{\bg}$ equals $2 \mm_{\bg}(\mathrm{B} \cdot v)$ ($\mathrm{B}=\exp(\bg)$). The basic idea of the proof is to apply the Fenchel's Convexity Theorem (see Appendix) to certain $\rho_{\widetilde{\bg}}$ and then to relate $\mm_{\ag}(\GrpA \cdot v)$ with $\dif \rho_{\widetilde{\bg}}$. We need to calculate $\dif^2 \rho_{\bg}$ for any $\bg \subseteq \ag$.  Let $X$, $Y \in \bg$ and $\widetilde{v}=\exp(X)\cdot v$
\begin{eqnarray*}
  \ipda{\dif^2 \rho_{\bg}(X) Y}{Y} &=& 2 \left.\ddt\right|_{0}\ipda{\mm_{\bg}(\exp(X+tY)\cdot v)}{Y} \\
   &=& 2 \left.\ddt\right|_{0} \frac{1}{||\exp(tY)\cdot \widetilde{v}||^2}\ipa{\pi(Y)\exp(tY)\cdot \widetilde{v}}{\exp(tY)\cdot\widetilde{v}} \\
   &=& \frac{2}{||\widetilde{v}||^4}\left[ (\ipa{\pi(Y)\pi(Y)\widetilde{v}}{\widetilde{v}}+\ipa{\pi(Y)\widetilde{v}}{\pi(Y)\widetilde{v}})||\widetilde{v}||^2 \right.\\
   & & \left. \hspace{1.2cm}-2\ipa{\pi(Y)\widetilde{v}}{\widetilde{v}}\ipa{\pi(Y)\widetilde{v}}{\widetilde{v}} \right] \\
   &=& \frac{4}{||\widetilde{v}||^4}[||\pi(Y)\widetilde{v}||^2||\widetilde{v}||^{2}-{\ipa{\pi(Y)\widetilde{v}}{\widetilde{v}}}^{2}].
\end{eqnarray*}
By Cauchy-Schwartz Inequality, $\ipda{\dif^2 \rho_{\bg}(X) Y}{Y} \geq 0$ and the equality holds if and only if $\pi(y)\widetilde{v}=\lambda \widetilde{v}$ for some $\lambda \in \RR$, which is equivalent to $\pi(y)v=\lambda v$ for some $\lambda \in \RR$. Thus $\dif^2 \rho_{\bg}(X)$ is positive definite for any $X\in\bg\smallsetminus\{ 0\}$ if and only if $\bg \cap \dg=\{0\}$ where
\begin{eqnarray*}
\dg:=\dg_{v}(\ag) &=& \{Y\in \ag : \pi(Y)v=\lambda v \mbox{ for some } \lambda\in\RR \}.
\end{eqnarray*}
Thus, let $\widetilde{\bg}:=\dg^{\perp}$ where $\dg^{\perp}$ is the orthogonal complement of $\dg$ in $\ag$.

We consider
\begin{eqnarray*}
\ag_v &=& \{Y \in \ag : \pi(Y)v=0\}.
\end{eqnarray*}
It is easy to see that $\ag_v$ is an ideal of $\dg$ of codimension $1$ or $0$; we have two cases
\begin{eqnarray}
\label{case1} \dg &=& \ag_v, \\
\nonumber  &or& \\
\label{case2} \dg &=& \ag_v \, \ortsum \, \RR Z .
\end{eqnarray}
with $Z\in \ag$ such that $\pi(Z)v=v$.

Now, let $X\in \ag$. We note that $\mm_{\ag}(\exp(X)v) = \mm_{\ag}(\exp(X')v)$ where $X'$ is the component of $X$ in $\widetilde{\bg}$. In fact, say $X = X'+X^{\prime \prime}$ with $X' \in \widetilde{\bg}$ and $X^{\prime \prime} \in \dg$,
\begin{eqnarray}
\nonumber   \mm_{\ag}(\exp(X)v) &=& \mm_{\ag}(\exp(X^{\prime})\exp(X^{\prime \prime})v ) \\
\nonumber    &=&  \mm_{\ag}(t\exp(X^{\prime})v) \mbox{ for some } t\in\RR\\
\label{reducmma} &=&  \mm_{\ag}(\exp(X^{\prime})v).
\end{eqnarray}
Here, we have used again that $\ag$ is abelian and $\mm_{\ag}$ is  invariant under a rescaling. Also, it is easy to see that for any $X \in \ag$
\begin{eqnarray}\label{ortomm}
    \mm_{\ag}(\exp(X)v) \in \ag \, \ortres \, \ag_{v}.
\end{eqnarray}

We are in a position to describe $\mm_{\ag}(\GrpA \cdot v)$ in terms of $\dif \rho_{\widetilde{\bg}}$. By the Equation (\ref{ortomm}), $\mm_{\ag}$ could have component in $\RR Z$ if the Case (\ref{case2}) holds. In this case, such component is always $\frac{1}{||Z||^2}Z$:
\begin{eqnarray*}
   \ipda{\mm_{\ag}(\exp(X)v)}{Z} &=& \frac{1}{||\exp(X)v||^2} \ipa{\pi(Z)\exp(X)v}{\exp(X)v} \\
   &=&
   \frac{1}{||\exp(X)v||^2}\ipa{\exp(X)\pi(Z)v}{\exp(X)v} \\
   &=&
   \frac{1}{||\exp(X)v||^2}\ipa{\exp(X)v}{\exp(X)v} \\
   &=& 1.
\end{eqnarray*}
We recall that $\mm_{\widetilde{\bg}}$ is the projection of $\mm_{\ag}$ on $\widetilde{\bg}$ (Proposition \ref{propfinalCAP1}, item \ref{proyMM}), from this, if $X^{'}=\Proj_{\widetilde{\bg}} X$ for $X\in \ag$ we have
\begin{eqnarray}
\nonumber   \mm_{\ag}(\exp(X)v) &=& \mm_{\ag}(\exp(X^{\prime})v) \mbox{ (by Equation (\ref{reducmma}) )}\\
\nonumber   &=& \Proy_{\widetilde{\bg}}\mm_{\ag}(\exp(X^{\prime})v) + \Proy_{\dg}\mm_{\ag}(\exp(X^{\prime})v)\\
\nonumber   &=& \mm_{\widetilde{\bg}}(\exp(X^{\prime})v) + \Proy_{\dg}\mm_{\ag}(\exp(X^{\prime})v)\\
\label{casosmmabel}   &=& \left\{ \begin{array}{l} \mm_{\widetilde{\bg}}(\exp(X^{\prime})\cdot v), \mbox{ if } \dg=\ag_{v}\\
        \mm_{\widetilde{\bg}}(\exp(X^{'})\cdot v) + \frac{1}{||Z||^2}Z, \mbox{ if } \dg = \ag_{v} \, \ortsum \, \RR Z \end{array}\right.
\end{eqnarray}
Now (\ref{casosmmabel}) becomes $\mm_{\ag}(\GrpA \cdot v)=\mm_{\widetilde{\bg}}(\widetilde{\mathrm{B}}\cdot v)$ or $\mm_{\ag}(\GrpA \cdot v)=\mm_{\widetilde{\bg}}(\widetilde{\mathrm{B}}\cdot v)+\frac{1}{||Z||^2}Z$ (here, $\widetilde{\mathrm{B}} = \exp(\widetilde{\bg})$), and since  $\mm_{\widetilde{\bg}}(\widetilde{\mathrm{B}}\cdot v)$ is a convex set  ($\rho_{\widetilde{\bg}}$ satisfies the hypotheses of the Fenchel's Convexity Theorem), whatever the case, we conclude that $\mm_{\ag}(\GrpA \cdot v)$ is a convex set.

To finish the proof, it is sufficient to prove that $\overline{\mm_{\ag}(\GrpA \cdot v)} = \CaC(\mathfrak{R}(v))$ (by \cite[Theorem 2.1.7]{GRUNBAUM1}, we have for any convex subset $\Omega$ of $\RR^n$, $\inte(\overline{\Omega})=\inte(\Omega)$). By Theorem \ref{expTh} item \ref{politope} we could prove that each exposed point of $\CaC(\mathfrak{R}(v))$ belongs to $\overline{\mm_{\ag}(\GrpA \cdot v)}$ . Let $\alpha_i$ be an exposed point of $\CaC(\mathfrak{R}(v))$. By definition, there exist $H_{i} \in \ag$ and $h_{i} \in \RR$ such that for each $X \in \CaC(\mathfrak{R}(v))$, $\ipda{X}{H_i}\geq h_i$ and the equality holds if and only if $X=\alpha_i$. By using this inequality in the Equation (\ref{mmAform}) it follows that
\begin{eqnarray*}
    \ds \lim_{t \to{-}\infty} \mm_{\ag}(\exp(t H_i)\cdot v) = \alpha_i.
\end{eqnarray*}
On account of the above, we have $\mm_{\ag}(\GrpA \cdot v)$ is a convex set such that $\overline{\mm_{\ag}(\GrpA \cdot v)}=\CaC(\mathfrak{R}(v))$, and, in consequence, $\mm_{\ag}(\GrpA \cdot v)=\inte(\CaC(\mathfrak{R}(v)))$.
\end{proof}

\begin{remark}
The above proposition is related with well-known results on convexity properties of the moment map (see, for instance \cite[Proposition 3]{HEINZNER2}). The argument used in the proof shows that it is independent of complex case (which is a difference with analogous results). In fact, by a similar argument, we can prove the respective result in the complex setting, this is, if $\GrpT$ is a torus and $\GrpT^{\CC}$ its universal complexification which acts linearly on complex vector space $\widehat{V}$ then $\mm_{\tg^{\CC}} (\GrpT^{\CC}\cdot v) = \inte(\CaC(\mathfrak{R}(v)))$ for any $v \in \widehat{V}\setminus \{0\}$ ($\mathfrak{R}(v)$ is defined analogously as above). Convexity properties of moment maps is currently a well-developed theory and has a rich history which includes results due to Schur, Horn, Kostant, Mumford, Atiyah, Guillemin and Sternberg, Kirwan, etc. (see \cite{HEINZNER4}).
\end{remark}

\begin{definition}\label{defnicespace}
Let ${{\tau}}:{\GrpG} \longrightarrow \mathrm{GL}({{V}})$ be a real reductive representation and let $W$ be a vector subspace of $V$. $W$ is called a \textit{nice space} if it is $\GrpA$-invariant and $\mm_{\ggo}(w)\in \ag$ for any $w \in W \setminus \{0\}$ (here, $\GrpA =\exp(\ag)$). Elements of $W$ are called \textit{nice elements}.
\end{definition}

\begin{remark}
Given that $\GrpA$ is compatible with the Cartan decomposition of $\GrpG$, if $W$ is a nice space, then $\mm_{\ag}(w)=\mm_{\ggo}(w)$ for any $w \in W \setminus \{0\}$.
\end{remark}

\begin{remark}
One could define a nice element $w\in V$ as that which satisfies $\mm_{\ggo}(\GrpA \cdot w) \subseteq \ag$. In the Proposition \ref{niceelement}, we prove that this notion coincides with that given in Definition \ref{defnicespace}. \end{remark}

\begin{remark}
In \cite{JABLONSKI2} there is not a name to the notion of nice space but certainly this should be \textit{$\GrpA$-detectable}. Since our motivation comes from the  Nikolayevsky's nice basis criterium, we prefer to use name \comillas{nice} in this definition.
\end{remark}

Now, we are ready to give a first step in the proof of our generalization of Nikolayevsky's nice basis criterium.

\begin{proposition}\label{onedirection}
 Let $W$ be a nice space and let $w\in W \setminus \{0\}$. If $\mcc(\mathfrak{R}(w)) \in \inte(\CaC(\mathfrak{R}(w)))$, then the $\GrpG$-orbit of $w$ is distinguished. Furthermore, there exists $X \in \ag$ such that $\exp(X)w$ is a critical point of $\F_{\ggo}$.
\end{proposition}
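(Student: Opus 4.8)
The plan is to combine two earlier results: the convexity statement of Proposition \ref{mmAorbit}, which identifies $\mm_{\ag}(\GrpA\cdot w)$ exactly with $\inte(\CH(\mathfrak{R}(w)))$, and the critical-point criterion of Proposition \ref{propfinalCAP1}, item \ref{criticocap}, which says that an element $v$ with $\mm_{\ggo}(v)\in\ag$ is critical for $\F_{\ggo}$ precisely when $\mm_{\ggo}(v)=\mcc(\mathfrak{R}(v))$. The hypothesis $\mcc(\mathfrak{R}(w))\in\inte(\CH(\mathfrak{R}(w)))$ is exactly what is needed to guarantee that the desired value of the moment map is \emph{attained} on the $\GrpA$-orbit of $w$, rather than merely approached in the limit.

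Concretely, I would proceed in the following steps. First, since $W$ is $\GrpA$-invariant, for every $X\in\ag$ the element $\exp(X)w$ again lies in $W\setminus\{0\}$, so by the remark following Definition \ref{defnicespace} we have $\mm_{\ggo}(\exp(X)w)=\mm_{\ag}(\exp(X)w)\in\ag$. Second, because $\mcc(\mathfrak{R}(w))\in\inte(\CH(\mathfrak{R}(w)))=\mm_{\ag}(\GrpA\cdot w)$ by Proposition \ref{mmAorbit}, there is some $X\in\ag$ with $\mm_{\ag}(\exp(X)w)=\mcc(\mathfrak{R}(w))$; together with the previous step this gives $\mm_{\ggo}(\exp(X)w)=\mcc(\mathfrak{R}(w))\in\ag$. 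Third, I would verify that the set of related weights does not change along the $\GrpA$-orbit: writing $w=w_1+\dots+w_s$ with $0\neq w_i\in V_{\alpha_i}$, we have $\exp(X)w=\sum_i\e^{\ipda{X}{\alpha_i}}w_i$ with every coefficient nonzero, so $\mathfrak{R}(\exp(X)w)=\mathfrak{R}(w)$ and hence $\mcc(\mathfrak{R}(\exp(X)w))=\mcc(\mathfrak{R}(w))$. Finally, setting $v:=\exp(X)w$, we have $\mm_{\ggo}(v)\in\ag$ and $\mm_{\ggo}(v)=\mcc(\mathfrak{R}(v))$, so Proposition \ref{propfinalCAP1}, item \ref{criticocap}(b), yields that $v$ is a critical point of $\F_{\ggo}$. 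Since $\exp(X)\in\GrpA\subseteq\GrpG$, the orbit $\GrpG\cdot w$ contains the critical point $v=\exp(X)w$, which proves simultaneously that $\GrpG\cdot w$ is $\GrpG$-distinguished and the ``furthermore'' assertion.

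I do not expect a serious obstacle here: the proof is essentially a careful assembly of the ingredients. The two points that require attention are that niceness of $W$ propagates along the $\GrpA$-orbit of $w$ (so that $\mm_{\ggo}$ and $\mm_{\ag}$ agree there, allowing the critical-point criterion to be invoked in $\ggo$ and not merely in $\ag$), and that $\mathfrak{R}(\cdot)$, hence $\mcc(\mathfrak{R}(\cdot))$, is constant along $\GrpA\cdot w$. All the genuine content sits in Proposition \ref{mmAorbit}: it is precisely the relative-openness of $\mm_{\ag}(\GrpA\cdot w)$ inside $\CH(\mathfrak{R}(w))$ that converts the interior-point hypothesis into the existence of an honest element $X\in\ag$ realizing the minimal convex combination.
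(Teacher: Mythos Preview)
Your proof is correct and follows essentially the same strategy as the paper: use Proposition~\ref{mmAorbit} to find $X\in\ag$ with $\mm_{\ag}(\exp(X)w)=\mcc(\mathfrak{R}(w))$, then invoke niceness of $W$ to upgrade $\mm_{\ag}$ to $\mm_{\ggo}$ and conclude criticality. The only difference is the final step: the paper argues that $\widetilde{w}=\exp(X)w$ minimizes $\F_{\ag}$ on $\GrpA\cdot w$, applies Theorem~\ref{stratification}(\ref{teocrit}) to obtain $\pi(\mm_{\ag}(\widetilde{w}))\widetilde{w}=\lambda\widetilde{w}$, and then replaces $\mm_{\ag}$ by $\mm_{\ggo}$ via niceness; you instead verify $\mathfrak{R}(\exp(X)w)=\mathfrak{R}(w)$ directly and invoke Proposition~\ref{propfinalCAP1}(\ref{criticocap})(b). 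Both routes are valid and of comparable length; your explicit check that $\mathfrak{R}(\cdot)$ is constant along the $\GrpA$-orbit is a detail the paper's route circumvents but which makes your argument slightly more self-contained.
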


\begin{proof}
We suppose that $\mcc(\mathfrak{R}(w)) \in \inte(\CH(\mathfrak{R}(w)))$. By Proposition \ref{mmAorbit} there exists $X\in \ag$ such that $\mm_{\ag}(\exp(X)\cdot w)=\mcc(\mathfrak{R}(w))$ and moreover, $\F_{\ag}|_{\GrpA \cdot w}$ has a minimum value at $\widetilde{w}=\exp(X)\cdot w$. By Theorem \ref{stratification} item \ref{teocrit}, $\widetilde{w}$ is a critical point of $\F_{\ag}$ and so $\pi(\mm_{\ag}(\widetilde{w}))\widetilde{w}=\lambda \widetilde{w}$ for some $\lambda \in \RR$. Since $w$ is nice and so is $\widetilde{w}$, $\mm_{\ag}(\widetilde{w})=\mm_{\ggo}(\widetilde{w})$. From this and, again, from Theorem \ref{stratification} item \ref{teocrit}, $\widetilde{w}$ is a critical point of $\F_{\ggo}$, and so $\GrpG \cdot w$ is a distinguished orbit.
\end{proof}

\begin{notation}\label{Gram}
Let $w$ be a nice element and consider the ordered set $\mathfrak{R}(w)$. We denote by $\gramU_{w}$ the Gram matrix of $(\mathfrak{R}(w),\ipd)$, i.e.
\begin{equation}\label{Gramform}
   \gramU_{w}(p,q)=\ipda{\mathfrak{R}(w)_{p}}{\mathfrak{R}(w)_{q}}
\end{equation}
with $1\leq p,q \leq \sharp\mathfrak{R}(w) $
\end{notation}

We omit the proof of the following corollaries, which follows easily from the method of Lagrange multipliers.

\begin{corollary}\label{coro1}
  Let $w$ be a nice element. If the equation
\begin{equation}\label{matrixU1}
    \gramU_{w}[x_i]=\lambda[1]
\end{equation}
has a solution $[x_i]$ for some $\lambda \in \RR$ such that each $x_i$ is positive (\textit{positive solution}) then $\GrpG \cdot w$ is a distinguished orbit.
\end{corollary}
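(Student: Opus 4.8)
The plan is to reduce the corollary to Proposition \ref{onedirection}: I will show that a positive solution of \eqref{matrixU1} forces $\mcc(\mathfrak{R}(w))$ to lie in $\inte(\CaC(\mathfrak{R}(w)))$, which is precisely the hypothesis of that proposition. Throughout I would label the weights so that $\mathfrak{R}(w)=\{\alpha_1,\dots,\alpha_s\}$ with $s=\sharp\mathfrak{R}(w)$, so that $\gramU_w(p,q)=\ipda{\alpha_p}{\alpha_q}$.

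First I would normalize the solution. From a positive solution $[x_i]$ of $\gramU_w[x_i]=\lambda[1]$, set $t=\sum_j x_j>0$, $c_i=x_i/t$, and $\beta=\sum_i c_i\alpha_i$. Dividing the $p$-th equation of the system by $t$ gives $\ipda{\alpha_p}{\beta}=\lambda/t$ for every $p$; multiplying by $c_p$ and summing over $p$ yields $\|\beta\|^2=\sum_p c_p\ipda{\alpha_p}{\beta}=\lambda/t$, whence $\ipda{\alpha_p}{\beta}=\|\beta\|^2$ for all $p=1,\dots,s$. (Incidentally this already shows $\lambda\ge 0$, but that is not needed.)

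Next I would identify $\beta$ with $\mcc(\mathfrak{R}(w))$. By definition $\mcc(\mathfrak{R}(w))$ is the point of $\CaC(\mathfrak{R}(w))$ closest to the origin, i.e.\ the minimizer over the standard simplex $\{c_i\ge 0,\ \sum_i c_i=1\}$ of the convex function $f(c)=\|\sum_i c_i\alpha_i\|^2$, whose gradient at $c$ is $\bigl(2\ipda{\alpha_p}{\beta}\bigr)_p$. The identity of the previous step says exactly that this gradient is proportional to $(1,\dots,1)$ at the point $(c_i)$; since all $c_i>0$ the sign constraints are inactive, so this is the Lagrange (KKT) stationarity condition for the equality-constrained problem, and $f$ being convex it certifies a global minimum. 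Hence $\beta=\mcc(\mathfrak{R}(w))$, and, being a convex combination of the $\alpha_i$ with \emph{all} coefficients strictly positive, $\beta$ lies in $\inte(\CaC(\mathfrak{R}(w)))$ (the relative interior of a polytope being the set of strictly positive convex combinations of the $\alpha_i$). Applying Proposition \ref{onedirection} then gives that $\GrpG\cdot w$ is distinguished, together with the refinement that $\exp(X)w$ is a critical point of $\F_\ggo$ for some $X\in\ag$.

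I do not expect a genuine obstacle here; the one place demanding a little care is the last step, where the convexity of $f$ is what turns a single Lagrange-stationary point into the global minimizer, and where one invokes the standard fact identifying the relative interior of a polytope with the strictly positive convex combinations of its defining points.
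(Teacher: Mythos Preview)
Your proof is correct and follows essentially the same route as the paper: the paper omits the argument, remarking only that the corollary follows from Proposition~\ref{onedirection} via the method of Lagrange multipliers, and that is precisely what you carry out. Your normalization and the KKT/convexity step make explicit what the paper leaves to the reader.
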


\begin{corollary}\label{coro2}
  Let $w$ be a nice element and suppose that $0 \nin \CH(\mathfrak{R}(w))$. If the equation
\begin{equation}\label{matrixU2}
    \gramU_{w}[x_i]=[1]
\end{equation}
has a positive solution $[x_i]$ then $\GrpG \cdot w$ is a distinguished orbit.
\end{corollary}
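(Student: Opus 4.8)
The plan is to deduce Corollary \ref{coro2} from Proposition \ref{onedirection} by showing that a positive solution of $\gramU_w[x_i]=[1]$ forces $\mcc(\mathfrak{R}(w))$ to lie in $\inte(\CH(\mathfrak{R}(w)))$. Write $\mathfrak{R}(w)=\{\alpha_1,\dots,\alpha_s\}$, let $[x_i]$ be such a positive solution, and set $\beta:=\sum_{i=1}^{s} x_i\alpha_i$. The $p$-th scalar equation of the system is exactly $\ipda{\alpha_p}{\beta}=1$, and pairing $\beta$ with itself gives $\|\beta\|^2=\sum_i x_i>0$; in particular $\beta\neq 0$.

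The first step is to renormalize. Put $\gamma:=\beta/\|\beta\|^2=\sum_i \bigl(x_i/\|\beta\|^2\bigr)\alpha_i$. Its coefficients are strictly positive and add up to $\sum_i x_i/\|\beta\|^2=1$, so $\gamma$ is a convex combination of $\mathfrak{R}(w)$ with all barycentric coordinates positive; hence $\gamma\in\inte(\CH(\mathfrak{R}(w)))$ (relative interior inside $\aff(\mathfrak{R}(w))$). Moreover $\ipda{\gamma}{\alpha_p}=1/\|\beta\|^2$ is independent of $p$.

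The second step is to identify $\gamma$ with $\mcc(\mathfrak{R}(w))$. For any $\delta\in\CH(\mathfrak{R}(w))$, expanding $\delta$ as a convex combination of the $\alpha_p$ and using $\ipda{\gamma}{\alpha_p}=1/\|\beta\|^2$ gives $\ipda{\gamma}{\delta}=1/\|\beta\|^2=\|\gamma\|^2$, so $\ipda{\gamma}{\delta-\gamma}=0$ and $\|\delta\|^2=\|\gamma\|^2+\|\delta-\gamma\|^2\geq\|\gamma\|^2$, with equality only when $\delta=\gamma$. Thus $\gamma$ is the unique point of $\CH(\mathfrak{R}(w))$ closest to the origin, i.e. $\gamma=\mcc(\mathfrak{R}(w))$. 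Combining the two steps, $\mcc(\mathfrak{R}(w))\in\inte(\CH(\mathfrak{R}(w)))$, and Proposition \ref{onedirection} gives that $\GrpG\cdot w$ is distinguished, with a critical point of $\F_{\ggo}$ of the form $\exp(X)w$, $X\in\ag$. Equivalently, $[x_i/\sum_j x_j]$ is a positive solution of $\gramU_w[\,\cdot\,]=\lambda[1]$ with $\lambda=1/\sum_j x_j>0$, so Corollary \ref{coro1} applies verbatim; this is the Lagrange-multipliers route hinted at before the statement.

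I do not expect a genuine obstacle: the argument is just the elementary convexity computation above, namely the critical-point equation of $c\mapsto\|\sum_i c_i\alpha_i\|^2$ on the simplex. The only points needing the usual care are that a convex combination with strictly positive coefficients lies in the relative interior of the convex hull, and the orthogonality identity characterizing $\mcc(\mathfrak{R}(w))$. It is also worth noting that the hypothesis $0\notin\CH(\mathfrak{R}(w))$ is automatically fulfilled once a positive solution exists, since then $\mcc(\mathfrak{R}(w))=\gamma\neq 0$; it is included because it is precisely what makes the converse implication (the one actually needed in the main theorem) work, guaranteeing that the multiplier $\lambda=\|\mcc(\mathfrak{R}(w))\|^2$ is nonzero.
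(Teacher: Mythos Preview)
Your proof is correct. The paper omits the proof entirely, remarking only that it ``follows easily from the method of Lagrange multipliers''; your argument is precisely that computation written out---identifying the critical point of $c\mapsto\|\sum_i c_i\alpha_i\|^2$ on the simplex via the system $\gramU_w[x_i]=[1]$ and then invoking Proposition~\ref{onedirection}---so it is the same approach, just made explicit. Your observation that the hypothesis $0\notin\CH(\mathfrak{R}(w))$ is in fact automatic once a positive solution exists is a nice bonus not mentioned in the paper.
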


To obtain the proof of the converse of Proposition \ref{onedirection} we need to restrict ourselves to the algebraic case. The idea is to apply Jablonski's results given in \cite{JABLONSKI2}. Let us recall some notions and results of \cite{JABLONSKI2} which we need.

\begin{definition}\cite[Definition 2.9]{JABLONSKI2}\label{jabdetecting}
Let $\GrpG$ be a (real or complex) linear reductive algebraic group acting linearly and rationally on a finite dimensional vector space $V$. Let $\GrpH$ be a compatible subgroup of $\GrpG$ and $W$ be an $\GrpH$-stable smooth subvariety of $V$. We say that the $\GrpG$-action is \textit{$\GrpH$-detectable along $W$} if $\mm_{\ggo}(w) \in \hg$ for any $w\in W$.
\end{definition}

\begin{theorem}\cite[Corollary 3.4]{JABLONSKI2}\label{detectingth}
Let $\GrpG$, $\GrpH$, $V$ and $W$ be as in the above definition and assume that $W$ is a closed subset of $V$. If the action is $\GrpH$-detectable along $W$, then for any $w\in W$, $\GrpG\cdot w$ is $\GrpG$-distinguished if and only if $\GrpH\cdot w$ is $\GrpH$-distinguished.
\end{theorem}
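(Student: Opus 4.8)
The plan is to prove the two implications separately, with a common starting remark. Because the action is $\GrpH$-detectable along $W$, we have $\mm_{\ggo}(w')\in\hg$ for every $w'\in W$, so Proposition~\ref{propfinalCAP1}, item~\ref{proyMM} (which gives $\mm_{\hg}=\Proy_{\hg}\circ\mm_{\ggo}$) yields $\mm_{\hg}(w')=\mm_{\ggo}(w')$ for all $w'\in W$. Consequently, along $W$ the functions $\F_{\ggo}$ and $\F_{\hg}$ coincide, the eigenvalue equation ``$\pi(\mm_{\ggo}(w'))w'=\lambda w'$'' of Theorem~\ref{stratification}, item~\ref{teocrit}(b) reads identically whether interpreted for $\GrpG$ or for $\GrpH$, and the gradient fields of $\F_{\ggo}$ and of $\F_{\hg}$ coincide along $W$ (each of these vector fields depends only on the restriction of the corresponding moment map). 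The implication ``$\GrpH\cdot w$ is $\GrpH$-distinguished $\Rightarrow$ $\GrpG\cdot w$ is $\GrpG$-distinguished'' is then immediate: a critical point $w'$ of $\F_{\hg}$ inside $\GrpH\cdot w$ lies in $W$, whence $\pi(\mm_{\ggo}(w'))w'=\pi(\mm_{\hg}(w'))w'=\lambda w'$, and Theorem~\ref{stratification}, item~\ref{teocrit}(b) applied to $\GrpG$ makes $w'$ a critical point of $\F_{\ggo}$; thus $\GrpG\cdot w=\GrpG\cdot w'$ is $\GrpG$-distinguished.

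For the converse, suppose $\GrpG\cdot w$ is $\GrpG$-distinguished, and let $s_{w}(t)$ be the integral curve of the gradient field of $\F_{\hg}$ through $w$. Since that gradient field is tangent to the $\GrpH$-orbits, $s_{w}(t)$ stays inside $\GrpH\cdot w\subseteq W$; and since the two gradient fields agree on $W$, by uniqueness of solutions $s_{w}(t)$ is simultaneously the integral curve of the gradient field of $\F_{\ggo}$ through $w$. By Theorem~\ref{stratification}, item~\ref{defestratos}, $v_{\infty}:=\lim_{t\to-\infty}s_{w}(t)$ exists and is a critical point of $\F_{\ggo}$ belonging to $\mathscr{C}(\beta)$, where $\mathscr{S}_{\beta}$ is the $\GrpG$-stratum of $w$; moreover $v_{\infty}\in\overline{\GrpH\cdot w}\subseteq\overline{W}=W$, so the opening remark also makes $v_{\infty}$ a critical point of $\F_{\hg}$. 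Hence $\GrpG\cdot v_{\infty}$ is a $\GrpG$-distinguished orbit, it lies in $\overline{\GrpG\cdot w}$, and — as $\mathscr{S}_{\beta}$ is $\GrpG$-stable — also in $\mathscr{S}_{\beta}$. Since $\GrpG\cdot w$ is likewise a $\GrpG$-distinguished orbit in $\overline{\GrpG\cdot w}\cap\mathscr{S}_{\beta}$, part~(3) of Theorem~\ref{stratification} (uniqueness of the distinguished orbit in such a set) forces $\GrpG\cdot v_{\infty}=\GrpG\cdot w$; in particular $v_{\infty}=g_{0}\cdot w$ for some $g_{0}\in\GrpG$.

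What remains — and what I expect to be the main obstacle — is to upgrade $v_{\infty}\in\overline{\GrpH\cdot w}\cap\GrpG\cdot w$ to $v_{\infty}\in\GrpH\cdot w$; granting this, $\GrpH\cdot w=\GrpH\cdot v_{\infty}$ contains the critical point $v_{\infty}$ of $\F_{\hg}$ and is therefore $\GrpH$-distinguished, completing the proof. It is exactly here that the algebraicity of $\GrpG$ and the rationality of $\tau$ are essential: for a general Lie group $v_{\infty}$ may genuinely sit on the frontier of $\GrpH\cdot w$. In the algebraic case $\overline{\GrpH\cdot w}\smallsetminus\GrpH\cdot w$ is a union of $\GrpH$-orbits of strictly smaller dimension, so one must prove $\dim\GrpH\cdot v_{\infty}=\dim\GrpH\cdot w$, which does not follow from a dimension count and is where real geometric invariant theory enters. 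Following \cite{JABLONSKI2}, the argument first reduces the notion of a distinguished orbit to that of a closed orbit via the Kirwan-Ness shift — replacing the stratum $\mathscr{S}_{\beta}$ by a genuine null cone for the reductive centralizer of $\beta$, in which the distinguished orbits are precisely the closed ones — and then invokes the Luna / Richardson-Slodowy slice theorem for compatible subgroups (cf.\ \cite{RICHARDSON1}): along a closed $\GrpH$-stable subvariety on which the action is $\GrpH$-detectable, $\GrpG\cdot w$ is closed if and only if $\GrpH\cdot w$ is closed. That last statement is in turn proved by a Kempf-Ness minimal-vector argument exactly parallel to the previous paragraph, the closedness of $W$ being what forces the limit of the flow into $\GrpH\cdot w$.
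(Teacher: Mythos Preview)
The paper does not prove this theorem at all: it is quoted verbatim as \cite[Corollary 3.4]{JABLONSKI2} and used as a black box in the proof of Proposition~\ref{onedirection2}. There is therefore no ``paper's own proof'' to compare your attempt against.

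On the substance of your attempt: the implication ``$\GrpH$-distinguished $\Rightarrow$ $\GrpG$-distinguished'' is correctly argued, and your reduction of the converse via the gradient flow to the statement ``$v_{\infty}\in\overline{\GrpH\cdot w}\cap\GrpG\cdot w$ implies $v_{\infty}\in\GrpH\cdot w$'' is sound. However, you explicitly leave this last step as a sketch, and that step \emph{is} the content of Jablonski's result; everything preceding it is soft. Your outline of the fix (pass from distinguished to closed orbits via the $\beta$-shift, then invoke a Richardson--Slodowy/Kempf--Ness argument for compatible subgroups) is the right shape, but as written it is not a proof: you have not explained why the reduction to closed orbits preserves the $\GrpH$-detectability hypothesis along the relevant subvariety, nor why the minimal-vector argument for closed orbits actually lands the limit in $\GrpH\cdot w$ rather than on its Zariski boundary. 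In short, your write-up is a correct reduction of the theorem to its essential difficulty, together with a plausible roadmap, but it stops exactly where the real work in \cite{JABLONSKI2} begins.
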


\begin{proposition}\label{onedirection2}
Let $\GrpG$ and $V$ as in the above definition and $W$ a nice space, and let $w\in W \setminus \{0\}$ such that $\GrpG \cdot w$ is a distinguished orbit. Then
$$
\mcc(\mathfrak{R}(w)) \in \inte(\CaC(\mathfrak{R}(w))).
$$
\end{proposition}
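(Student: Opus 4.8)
The plan is to reduce the statement to the $\GrpA$-action via Jablonski's detectability theorem (Theorem~\ref{detectingth}) and then extract the convex-geometry conclusion from the convexity of $\mm_{\ag}(\GrpA\cdot w)$ established in Proposition~\ref{mmAorbit}. First I would observe that since $W$ is a nice space, the $\GrpG$-action is $\GrpA$-detectable along (the Zariski closure of) $W$ in the sense of Definition~\ref{jabdetecting}: indeed $\mm_{\ggo}(w)\in\ag$ for all nonzero $w\in W$ by the very definition of nice space, and $W$ is $\GrpA$-invariant; one needs to check that the relevant subvariety can be taken closed so that Theorem~\ref{detectingth} applies, but $\mm_\ggo$ is scale-invariant and the condition $\mm_\ggo(w)\in\ag$ is a closed condition, so passing to the closure causes no trouble. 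Applying Theorem~\ref{detectingth}, from $\GrpG\cdot w$ being $\GrpG$-distinguished we conclude that $\GrpA\cdot w$ is $\GrpA$-distinguished; that is, there is a critical point of $\F_{\ag}$ in $\GrpA\cdot w$, say $\widetilde w=\exp(X)\cdot w$ with $X\in\ag$.

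Next I would translate ``$\widetilde w$ is a critical point of $\F_{\ag}$'' into the desired statement about $\mcc(\mathfrak{R}(w))$. By Theorem~\ref{stratification} item~\ref{teocrit} applied to the abelian group $\GrpA$, $\widetilde w$ is a global minimum of $\F_{\ag}|_{\GrpA\cdot w}=\F_{\ag}|_{\GrpA\cdot\widetilde w}$; hence $\mm_{\ag}(\widetilde w)$ is the point of $\mm_{\ag}(\GrpA\cdot w)$ closest to the origin. By Proposition~\ref{mmAorbit}, $\mm_{\ag}(\GrpA\cdot w)=\inte(\CH(\mathfrak{R}(w)))$, whose closure is $\CH(\mathfrak{R}(w))$ (using \cite[Theorem 2.1.7]{GRUNBAUM1} as in the proof of Proposition~\ref{mmAorbit}); therefore the closest-to-origin point of $\CH(\mathfrak{R}(w))$, which is exactly $\mcc(\mathfrak{R}(w))$ by definition, coincides with $\mm_{\ag}(\widetilde w)\in\inte(\CH(\mathfrak{R}(w)))$. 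Since $\mathfrak{R}(w)\subseteq\ag$ and the convex hull, its interior relative to $\aff(\mathfrak{R}(w))$, and the minimal convex combination are all computed inside $\ag$, this is precisely the assertion $\mcc(\mathfrak{R}(w))\in\inte(\CH(\mathfrak{R}(w)))$. (Here I note that $\mathfrak R(w)=\mathfrak R(\widetilde w)$, since $\exp(X)$ with $X\in\ag$ scales each weight component $w_i$ by the nonzero factor $\e^{\ipda{X}{\alpha_i}}$ and so does not change which components are nonzero.)

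The main obstacle I anticipate is the bookkeeping needed to invoke Theorem~\ref{detectingth} cleanly: that theorem is stated for an $\GrpH$-stable closed smooth subvariety $W$, whereas here $W$ is merely a vector subspace that is a priori only $\GrpA$-invariant as a subspace, and one must make sure the hypotheses ``$\GrpA$ compatible subgroup'', ``$W$ $\GrpA$-stable subvariety'', ``$\GrpA$-detectable along $W$'' are all literally met in the algebraic category — in particular that $\GrpA=\exp(\ag)$ really is a compatible algebraic subgroup of the algebraic group $\GrpG$ and that the nice space $W$, being a linear subspace, is automatically a closed smooth subvariety. Once that identification is in place, the remainder is the short convexity argument above, whose only subtlety is correctly matching ``critical point of $\F_\ag$ on the orbit'' with ``minimal convex combination of the weights'', which is exactly item~\ref{teocrit} of Theorem~\ref{stratification} together with Proposition~\ref{mmAorbit}. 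It is also worth double-checking that the conclusion of Proposition~\ref{onedirection} is a genuine converse to what we prove here, i.e. that $\inte(\CH(\mathfrak R(w)))$ in one statement and $\inte(\CaC(\mathfrak R(w)))$ in the other denote the same set, which they do since $\CaC$ is the notation for the convex hull used earlier in the excerpt.
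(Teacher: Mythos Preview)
Your proposal is correct and follows essentially the same route as the paper: apply Jablonski's detectability result (Theorem~\ref{detectingth}) to pass from $\GrpG$-distinguished to $\GrpA$-distinguished, then use Theorem~\ref{stratification}(\ref{teocrit}) together with Proposition~\ref{mmAorbit} to identify the resulting critical value of $\mm_{\ag}$ with $\mcc(\mathfrak{R}(w))$ lying in $\inte(\CH(\mathfrak{R}(w)))$. Your discussion of the hypotheses of Theorem~\ref{detectingth} (that $W$ is a linear subspace, hence a closed smooth subvariety, and that $\GrpA$ is compatible) and your remark that $\mathfrak{R}(w)=\mathfrak{R}(\widetilde{w})$ are more explicit than the paper, which simply invokes the theorem and a one-line continuity argument, but the logical skeleton is identical.
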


\begin{proof}
Since $\GrpG \cdot w$ is a distinguished, $\GrpA \cdot w$ is a distinguished orbit of the $\GrpA$-action (by Theorem \ref{detectingth}). Thus, there exist $\widetilde{w} \in \GrpA \cdot w$ which  is a critical point of $\F_{\ag}$. By Theorem \ref{stratification} item \ref{teocrit} we have $||\mm_{\ag}(\widetilde{w})||^2 \leq ||\mm_{\ag}(w^{\prime})||^2$ for any $w^{\prime} \in \GrpA \cdot w$. It follows from Proposition \ref{mmAorbit} that $\mm_{\ag}(\widetilde{w})$ is a vector of minimal norm in the convex set $\inte(\CaC( \mathfrak{R}(w)))$ and by a continuity argument, $\mm_{\ag}(\widetilde{w})$ is also a vector of minimum norm in the set $\CaC( \mathfrak{R}(w))$; hence $\mcc(\mathfrak{R}(w)) \in \inte(\CaC(\mathfrak{R}(w)))$.
\end{proof}

We have from Propositions \ref{onedirection} and \ref{onedirection2} our main result and equivalences in the Corollaries \ref{coro1} and \ref{coro2} to the algebraic case.

\begin{theorem}
 Let $\GrpG$ be a real linear reductive algebraic group and $\tau:\GrpG \longrightarrow \Glv$ be a rational representation of $\GrpG$, and let $w$ be a nice element. The orbit $\GrpG \cdot w$ is distinguished if and only if
$$
\mcc(\mathfrak{R}(w)) \in \inte(\CaC(\mathfrak{R}(w))).
$$
 Moreover, in such case, there exists $X \in \ag$ such that $\exp(X)w$ is a critical point of $\F_{\ggo}$.
\end{theorem}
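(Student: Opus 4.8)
The plan is to deduce the theorem by assembling the two one-directional statements already in place, Proposition~\ref{onedirection} and Proposition~\ref{onedirection2}, and checking that their hypotheses are met in the present algebraic–rational setting. Since $w$ is a nice element it lies in some nice space $W$ (Definition~\ref{defnicespace}). Being a linear subspace of $V$, $W$ is in particular a closed smooth $\GrpA$-stable subvariety, and the defining property $\mm_{\ggo}(W\setminus\{0\})\subseteq\ag=\Lie(\GrpA)$ is precisely the statement that the $\GrpG$-action is $\GrpA$-detectable along $W$ in the sense of Definition~\ref{jabdetecting}. This is the only place the algebraic and rational hypotheses are used, but it is exactly what unlocks Jablonski's machinery.

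For the \emph{if} direction, assume $\mcc(\mathfrak{R}(w))\in\inte(\CaC(\mathfrak{R}(w)))$. This is verbatim the hypothesis of Proposition~\ref{onedirection}, which produces $X\in\ag$ with $\exp(X)w$ a critical point of $\F_{\ggo}$; hence $\GrpG\cdot w$ is distinguished and the ``moreover'' clause is delivered at the same time. Internally, the argument runs the convexity identity $\mm_{\ag}(\GrpA\cdot w)=\inte(\CaC(\mathfrak{R}(w)))$ of Proposition~\ref{mmAorbit} to find $X$ realizing the minimal-norm point $\mcc(\mathfrak{R}(w))$ as $\mm_{\ag}(\exp(X)w)$, invokes Theorem~\ref{stratification} item~\ref{teocrit} to upgrade ``minimizer of $\F_{\ag}$ on $\GrpA\cdot w$'' to ``critical point of $\F_{\ag}$'', and then passes from $\F_{\ag}$-critical to $\F_{\ggo}$-critical by using niceness in the form $\mm_{\ag}(\exp(X)w)=\mm_{\ggo}(\exp(X)w)$ together with Theorem~\ref{stratification} item~\ref{teocrit} again. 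No algebraicity is needed for this half.

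For the \emph{only if} direction, assume $\GrpG\cdot w$ is distinguished. By the detectability theorem of Jablonski (Theorem~\ref{detectingth}), which applies because $W$ is closed and the action is $\GrpA$-detectable along $W$, the orbit $\GrpA\cdot w$ is $\GrpA$-distinguished; choose $\widetilde w\in\GrpA\cdot w$ a critical point of $\F_{\ag}$. By Theorem~\ref{stratification} item~\ref{teocrit}, $\mm_{\ag}(\widetilde w)$ has minimal norm among $\mm_{\ag}(w')$ for $w'\in\GrpA\cdot w$, and by Proposition~\ref{mmAorbit} this set of values is the relative interior of the polytope $\CaC(\mathfrak{R}(w))$; a continuity/compactness argument identifies $\mm_{\ag}(\widetilde w)$ with the unique minimal-norm point of the closed polytope, i.e.\ with $\mcc(\mathfrak{R}(w))$, and since it was already found inside the relative interior we conclude $\mcc(\mathfrak{R}(w))\in\inte(\CaC(\mathfrak{R}(w)))$. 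This is exactly Proposition~\ref{onedirection2}, so the final assembly is just bookkeeping.

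I expect the main obstacle not to be in this concluding step — which merely chains the two propositions — but to be the one already absorbed into Proposition~\ref{onedirection2}: descending from a distinguished $\GrpG$-orbit to a distinguished $\GrpA$-orbit genuinely requires Theorem~\ref{detectingth}, whose proof relies on the algebraic structure (closedness properties of orbits and of the null-cone stratification for real reductive algebraic groups acting rationally). This is the reason the hypotheses that $\GrpG$ is algebraic and $\tau$ is rational cannot be dropped in the characterization, even though the sufficiency half (Proposition~\ref{onedirection}) holds for arbitrary real reductive representations.
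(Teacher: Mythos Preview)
Your proposal is correct and follows exactly the paper's approach: the paper simply states that the theorem follows from Propositions~\ref{onedirection} and~\ref{onedirection2}, without giving any further argument. Your write-up is in fact more detailed than the paper's, since you spell out why the nice space $W$ satisfies the hypotheses of Theorem~\ref{detectingth} and how the ``moreover'' clause is supplied by Proposition~\ref{onedirection}.
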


\begin{remark}
It is easy to see that the above results are valid in the complex case.
\end{remark}

\section{How to know if your $\GrpA$-space is nice}\label{seccionsabernice}

In this section, we consider an $\GrpA$-invariant subspace $W$ of $V$ and its decomposition in weight spaces
\begin{equation}\label{Wdec}
    W = W_{\alpha_{1}} \, \ortsum  \, \ldots \, \ortsum W_{\alpha_{r}}
\end{equation}
with weight set $\Delta({W})=\{\alpha_1, \ldots, \alpha_r\}$. We follow ideas go back at least as far as \cite[Lemma 7.1]{SJAMAAR1} and results presented in \cite{LAURETW2}.

\begin{remark}
It is easily seen that the representation $\fnn{\pi|_{\ag}}{\ag}{\glg(V)}$ is a completely reducible representation of $\ag$. Hence, $\Delta({W})$ is a subset of $\Delta(V)$ and each $W_{\alpha_{i}}$ is a subspace of some $V_{\alpha_{j}}$. For this reason, there is no problem in considering $\mathfrak{R}(w)$ with respect to the decomposition (\ref{Wdec}) for all $w\in W$.
\end{remark}

\begin{lemma}\label{lema1}
Let $\gamma \in \Delta(\ggo)$, $Y \in \ggo_{\gamma}$, $\alpha_i \in \Delta(W)$ and $w_i \in W_{\alpha_{i}}$ such that $\pi(Y)w_{i} \neq 0$, then $\pi(Y)w_{i}$ is a simultaneous eigenvector for the family $\{\pi(X) : X \in \ag \}$. Moreover, if $\Proj_{W} \pi(Y)w_i \neq 0$ then $\Proj_{W} \pi(Y)w_i$ is a weight vector with corresponding weight $\gamma + \alpha_i$, and in consequence, $\gamma + \alpha_i = \alpha_j$ for some $\alpha_j \in \Delta(W)$ with $\alpha_i \neq \alpha_j$.
\end{lemma}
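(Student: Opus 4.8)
The plan is to reduce everything to item \ref{sumaraizpesoR} of Proposition \ref{resumen}, together with the orthogonality of the two weight decompositions. First I would invoke the remark preceding the statement: $\Delta(W)\subseteq\Delta(V)$ and $W_{\alpha_k}\subseteq V_{\alpha_k}$ for every $k$, so in particular $w_i\in V_{\alpha_i}$. Since $Y\in\ggo_{\gamma}$, item \ref{sumaraizpesoR} of Proposition \ref{resumen} gives $\pi(Y)w_i\in\pi(\ggo_{\gamma})V_{\alpha_i}\subseteq V_{\gamma+\alpha_i}$; as $\pi(Y)w_i\neq0$ by hypothesis, we get $\gamma+\alpha_i\in\Delta(V)$ and $\pi(X)\big(\pi(Y)w_i\big)=\ipda{X}{\gamma+\alpha_i}\,\pi(Y)w_i$ for every $X\in\ag$, which is exactly the assertion that $\pi(Y)w_i$ is a simultaneous eigenvector for $\{\pi(X):X\in\ag\}$. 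This disposes of the first claim.

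For the second claim I would first record how $\Proj_{W}$ acts on weight vectors. Because the sum in (\ref{decV}) is orthogonal, the sum in (\ref{Wdec}) is orthogonal, and $W_{\alpha_k}\subseteq V_{\alpha_k}$, any $v\in V_{\beta}$ with $\beta\in\Delta(V)$ is orthogonal to every $W_{\alpha_k}$ with $\alpha_k\neq\beta$; hence $\Proj_{W}v=0$ if $\beta\notin\Delta(W)$, while $\Proj_{W}v=\Proj_{W_{\beta}}v\in W_{\beta}$ if $\beta\in\Delta(W)$. Applying this with $v=\pi(Y)w_i\in V_{\gamma+\alpha_i}$: if $\Proj_{W}\pi(Y)w_i\neq0$ then necessarily $\gamma+\alpha_i\in\Delta(W)$, say $\gamma+\alpha_i=\alpha_j$, and then $\Proj_{W}\pi(Y)w_i\in W_{\alpha_j}$, so it is a weight vector of weight $\alpha_j=\gamma+\alpha_i$. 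Finally $\alpha_i\neq\alpha_j$ because $\gamma\in\Delta(\ggo)$ forces $\gamma\neq0$ (recall $0\notin\Delta(\ggo)$).

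The lemma is essentially a bookkeeping exercise, so I do not expect a serious obstacle; the only point that needs a little care is the description of $\Proj_{W}$ on weight vectors, where one must genuinely use both that distinct $\ag$-weight spaces of $V$ are mutually orthogonal and that the weight decomposition of the $\GrpA$-invariant subspace $W$ is subordinate to that of $V$ (each $W_{\alpha_k}\subseteq V_{\alpha_k}$), so that $\Proj_{W}$ preserves weights and annihilates weight vectors whose weight does not occur in $\Delta(W)$. Granting that, the first claim is immediate from item \ref{sumaraizpesoR} of Proposition \ref{resumen}, and the second follows by tracking where the single weight vector $\pi(Y)w_i$ lands under the projection.
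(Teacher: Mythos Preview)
Your proof is correct and follows essentially the same line as the paper's own argument. The only cosmetic difference is that you invoke Proposition~\ref{resumen}(\ref{sumaraizpesoR}) directly for the first claim, whereas the paper redoes that commutator computation in place, and for the second claim you argue via the orthogonality of the ambient weight spaces $V_{\alpha}$ while the paper phrases the same fact as ``$W^{\perp}$ is $\GrpA$-invariant''; both routes amount to the same observation.
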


\begin{proof}

Since $Y \in \ggo_{\gamma}$, $[X,Y]=\ipda{\gamma}{X}Y$ for all $X\in \ag$ and thus
\begin{equation*}
    \ipda{\gamma}{X}\pi(Y) = \pi([X,Y]) = \pi(X)\pi(Y)-\pi(Y)\pi(X)
\end{equation*}

and finally
\begin{eqnarray*}
  \pi(X)(\pi(Y)w_i) & = & \ipda{\gamma}{X}\pi(Y)w_i+\pi(Y)\pi(X)w_i \\
                    & = & \ipda{\gamma}{X}\pi(Y)w_i + \ipda{\alpha_i}{X} \pi(Y)w_i \\
                    & = & \ipda{\gamma + \alpha_i}{X}\pi(Y)w_i .
\end{eqnarray*}

To see the second part, let $\pi(Y)w_i=w+w^{'}$ with $w\in W$ ($w=\Proj_{W}\pi(Y)w_i$) and $w^{'} \in W^{\perp}$.
\begin{eqnarray*}
  \pi(X)w + \pi(X)w^{'} & = & \pi(X)\pi(Y)w_i \\
                        & = & \ipda{\gamma + \alpha_i}{X}\pi(Y)w_i \\
                        & = & \ipda{\gamma + \alpha_i}{X}w + \ipda{\gamma + \alpha_i}{X}w^{'} .
\end{eqnarray*}

By using that $W^{\perp}$ is also $\GrpA$-invariant as $W$, we have $\pi(X)w^{'} \in W^{\perp}$, we thus get $\pi(X)w = \ipda{\gamma + \alpha_i}{X}w$.
\end{proof}

\begin{corollary}\label{critpesos}
  Fix a weight space $W_{\alpha_{i}}$ of the decomposition (\ref{Wdec}). $W_{\alpha_{i}}$ is a nice space where all its points are critical points of $\F_{\ggo}$ with $\mm_{\ggo}(w_i)=\alpha_i$ for all $w_i \in W_{\alpha_{i}}$.
\end{corollary}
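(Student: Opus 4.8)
The plan is to verify the three assertions in order: $W_{\alpha_i}$ is $\GrpA$-invariant, $\mm_{\ggo}(w_i)=\alpha_i\in\ag$ for every nonzero $w_i\in W_{\alpha_i}$, and finally that each such $w_i$ is a critical point of $\F_{\ggo}$. The first point is immediate: for $X\in\ag$ one has $\pi(X)|_{W_{\alpha_i}}=\ipda{X}{\alpha_i}\Id$, so $\exp(X)$ acts on $W_{\alpha_i}$ by the scalar $\e^{\ipda{X}{\alpha_i}}$, and since $\GrpA=\exp(\ag)$ is connected this shows $W_{\alpha_i}$ is $\GrpA$-stable.

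The heart of the argument is the second point. Fix $w_i\in W_{\alpha_i}\setminus\{0\}$. Since the image of $\mm_{\ggo}$ lies in $\pg$, it suffices to evaluate $\ipda{\mm_{\ggo}(w_i)}{X}=\tfrac{1}{||w_i||^2}\ipa{\pi(X)w_i}{w_i}$ for $X$ ranging over an orthogonal decomposition $\pg=\ag\,\ortsum\,(\pg\cap\ag^{\perp})$. For $X\in\ag$ we get $\ipa{\pi(X)w_i}{w_i}=\ipda{X}{\alpha_i}||w_i||^2$, which already gives $\ipda{\mm_{\ggo}(w_i)}{X}=\ipda{\alpha_i}{X}$. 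For $X\in\pg\cap\ag^{\perp}$ I claim $\ipa{\pi(X)w_i}{w_i}=0$. Decompose $X$ along the restricted-root spaces of $\ggo$: the $\ggo_0$-component of any element of $\pg$ lies in $\pg\cap\ggo_0=\ag$ (by maximality of $\ag$ in $\pg$), hence is zero here since $X\perp\ag$; thus $X=\sum_{\gamma\in\Delta(\ggo)^{+}}\big(Y_{\gamma}-\theta(Y_{\gamma})\big)$ with $Y_{\gamma}\in\ggo_{\gamma}$ (using $\theta(X)=-X$ and $\theta(\ggo_{\gamma})=\ggo_{-\gamma}$). For each $\gamma\neq 0$, Lemma \ref{lema1} (equivalently Proposition \ref{resumen} item \ref{sumaraizpesoR}) gives $\pi(Y_{\gamma})w_i\in V_{\gamma+\alpha_i}$ and $\pi(\theta(Y_{\gamma}))w_i\in V_{-\gamma+\alpha_i}$, and since $\gamma+\alpha_i\neq\alpha_i\neq-\gamma+\alpha_i$, the orthogonality of the weight decomposition (\ref{decV}) forces these vectors to be orthogonal to $w_i\in V_{\alpha_i}$. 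Summing over $\gamma$ yields $\ipa{\pi(X)w_i}{w_i}=0$, so $\mm_{\ggo}(w_i)\in\ag$ and in fact $\mm_{\ggo}(w_i)=\alpha_i$. In particular, by Definition \ref{defnicespace}, $W_{\alpha_i}$ is a nice space.

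Finally, for the critical-point claim: since $\mm_{\ggo}(w_i)=\alpha_i\in\ag$, we have $\pi(\mm_{\ggo}(w_i))w_i=\pi(\alpha_i)w_i=\ipda{\alpha_i}{\alpha_i}w_i=||\alpha_i||^2w_i$, which is a scalar multiple of $w_i$. By Theorem \ref{stratification} item \ref{teocrit}, condition (b), this is precisely the criterion for $w_i$ to be a critical point of $\F_{\ggo}$. This completes all three assertions.

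The only delicate step is the vanishing $\ipa{\pi(X)w_i}{w_i}=0$ for $X\in\pg\cap\ag^{\perp}$: one must be careful that the $\ggo_0$-part of such an $X$ is zero (this is where maximality of $\ag$ in $\pg$ enters) and then use that root translation moves the weight $\alpha_i$ off itself, so that orthogonality of the weight spaces does the job. Everything else is routine bookkeeping with the decompositions already set up in the Preliminaries.
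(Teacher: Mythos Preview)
Your proof is correct and follows essentially the same approach as the paper's: the key step in both is that for $Y\in\ggo_{\gamma}$ with $\gamma\neq 0$ one has $\pi(Y)w_i\in V_{\gamma+\alpha_i}\perp V_{\alpha_i}$, forcing $\ipa{\pi(Y)w_i}{w_i}=0$ and hence $\mm_{\ggo}(w_i)\in\ggo_0\cap\pg=\ag$. The only presentational difference is that the paper tests $\mm_{\ggo}(w_i)$ directly against each $\ggo_{\gamma}$ (and invokes Lemma \ref{lema1} via the projection onto $W$), whereas you first restrict to $\pg$ and then decompose $X\in\pg\cap\ag^{\perp}$ as $\sum_{\gamma>0}(Y_{\gamma}-\theta(Y_{\gamma}))$; both routes use the maximality of $\ag$ in $\pg$ to identify $\ggo_0\cap\pg$ with $\ag$, you just make this explicit.
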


\begin{proof}
We must prove that $\mm_{\ggo}(w_i) \in \ag$. By Equation (\ref{momentmap}) and the Decomposition (\ref{decGGO}), this is equivalent to show $\ipa{\pi(Y)w_i}{w_i}=0$ for all $\gamma \in \Delta(\ggo)$ and $Y \in \ggo_{\gamma}$.

Let $w=\Proj_{W} \pi(Y)w_i$, so $\ipa{\pi(Y)w_i}{w_i}=\ipa{w}{w_i}$. If $w=0$,  there is nothing to prove. In other case, by the previous lemma, $w$ is a weight vector of weight $\gamma + \alpha_i$. Since $\gamma \neq 0$, $\gamma + \alpha_i \neq \alpha_i$ which gives $w \perp w_i$.

From Expression (\ref{mmAform}), we have $\mm_{\ggo}(w_i)=\alpha_i$ and so $\pi(\mm_{\ggo}(w_i))w_i=\pi(\alpha_i)w_i=\ipda{\alpha_i}{\alpha_i}w_i$; $w_i$ is a critical point of $\F_{\ggo}$
\end{proof}

\begin{lemma}\label{lema2}
  Suppose that for all $\alpha_i, \, \alpha_j \in \Delta(W)$ such that $\alpha_j - \alpha_i \in \Delta(\ggo)$ one has that $\pi(\ggo_{\gamma}) W_{\alpha_{i}} \perp W$, where $\gamma=\alpha_j - \alpha_i$. Then,
\begin{equation*}
    \pi(\ggo_{\gamma}) W_{\alpha_{i}} \perp W,
\end{equation*}
for any $\gamma \in \Delta(\ggo)$, and $\alpha_i \in \Delta(W)$.
\end{lemma}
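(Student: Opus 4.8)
The plan is to reduce an arbitrary root $\gamma \in \Delta(\ggo)$ to the case already handled by the hypothesis, relying only on the weight bookkeeping of Proposition \ref{resumen} and on the orthogonality of distinct weight spaces in Decomposition (\ref{decV}). So I would fix $\gamma \in \Delta(\ggo)$ and $\alpha_i \in \Delta(W)$ and aim to show $\pi(\ggo_{\gamma}) W_{\alpha_i} \perp W$.

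First I would note, by item \ref{sumaraizpesoR} of Proposition \ref{resumen}, that $\pi(\ggo_{\gamma}) V_{\alpha_i} \subseteq V_{\gamma + \alpha_i}$, hence $\pi(\ggo_{\gamma}) W_{\alpha_i} \subseteq V_{\gamma + \alpha_i}$ because $W_{\alpha_i} \subseteq V_{\alpha_i}$. Since $W = W_{\alpha_1} \ortsum \cdots \ortsum W_{\alpha_r}$ with each $W_{\alpha_k} \subseteq V_{\alpha_k}$ and the spaces $V_{\beta}$ ($\beta \in \Delta(V)$) are pairwise orthogonal, everything then comes down to whether $\gamma + \alpha_i$ is a weight of $W$. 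If $\gamma + \alpha_i \notin \Delta(W)$, then $\gamma + \alpha_i \neq \alpha_k$ for every $k$, so $V_{\gamma + \alpha_i}$ is orthogonal to each $V_{\alpha_k}$ — and in particular to $W$ — when $\gamma + \alpha_i \in \Delta(V)$, while if $\gamma + \alpha_i \notin \Delta(V)$ one has $V_{\gamma + \alpha_i} = \{0\}$ and hence $\pi(\ggo_{\gamma}) W_{\alpha_i} = \{0\}$; either way $\pi(\ggo_{\gamma}) W_{\alpha_i} \perp W$. If instead $\gamma + \alpha_i \in \Delta(W)$, say $\gamma + \alpha_i = \alpha_j$, then $\gamma = \alpha_j - \alpha_i$ lies in $\Delta(\ggo)$, so the hypothesis applies verbatim to the pair $\alpha_i, \alpha_j$ and gives $\pi(\ggo_{\gamma}) W_{\alpha_i} \perp W$. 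These two cases are exhaustive, which finishes the argument.

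I do not expect a genuine obstacle here: both cases are immediate. The only point deserving care is to keep in mind that $W$ is contained in the sum of precisely those weight spaces $V_{\alpha_k}$ with $\alpha_k \in \Delta(W)$, so that the $W$-component of any vector sitting inside a weight space $V_{\gamma + \alpha_i}$ with $\gamma + \alpha_i \notin \Delta(W)$ automatically vanishes. If one prefers, the second case can be rephrased through Lemma \ref{lema1}: were $\Proj_W \pi(\ggo_{\gamma}) W_{\alpha_i} \neq \{0\}$, that projection would be a weight vector of weight $\gamma + \alpha_i$, forcing $\gamma + \alpha_i = \alpha_j$ for some $\alpha_j \in \Delta(W)$ and hence $\gamma = \alpha_j - \alpha_i \in \Delta(\ggo)$, so the hypothesis would apply and contradict the assumed non-vanishing.
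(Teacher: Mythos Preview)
Your proof is correct and follows essentially the same approach as the paper: the paper argues by contradiction, invoking Lemma~\ref{lema1} to conclude that a nonzero $\Proj_W \pi(Y)w_i$ would force $\gamma+\alpha_i=\alpha_j\in\Delta(W)$ and hence a contradiction with the hypothesis, which is exactly your second case (and precisely the rephrasing you offer at the end). Your direct case split using Proposition~\ref{resumen}\,(\ref{sumaraizpesoR}) and the orthogonality of weight spaces is just the contrapositive unpacked.
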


\begin{proof}
Suppose the assertion of the lemma is false. From Lemma \ref{lema1} we could find $\gamma \in \Delta(\ggo)$, $Y \in \ggo_{\gamma}$, $\alpha_i, \, \alpha_j \in \Delta(W)$ and $w_i \in W_{\alpha_{i}}$ such that $\Proj_{W} \pi(Y)w_i$ is a weight vector of weight $\gamma + \alpha_i = \alpha_j$. This gives $\alpha_j - \alpha_i \in \Delta(\ggo)$ and the hypothesis now becomes $\Proj_{W} \pi(Y)w_i = 0$; this is a contradiction.
\end{proof}

\begin{theorem}\label{nicebasis}
An $\GrpA$-invariant subspace of $W$ of $V$ is nice if and only if for all $\alpha_i, \, \alpha_j \in \Delta(W)$ such that $\alpha_j - \alpha_i \in \Delta(\ggo)$,
$$
\pi(\ggo_{\gamma}) W_{\alpha_{i}} \perp W
$$
where $\gamma=\alpha_j - \alpha_i$
\end{theorem}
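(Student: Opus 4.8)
The plan is to prove the two implications separately; in both directions the engine is the pair of Lemmas \ref{lema1} and \ref{lema2}, together with the observation already used in the proof of Corollary \ref{critpesos}: by the restricted-root space decomposition (\ref{decGGO}), a vector $w \in W \smallsetminus \{0\}$ satisfies $\mm_{\ggo}(w) \in \ag$ if and only if $\ipa{\pi(Y)w}{w} = 0$ for every $\gamma \in \Delta(\ggo)$ and every $Y \in \ggo_{\gamma}$.

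For the ``if'' direction, assume the displayed orthogonality holds for every pair $\alpha_i, \alpha_j \in \Delta(W)$ with $\alpha_j - \alpha_i \in \Delta(\ggo)$. The first step is to invoke Lemma \ref{lema2}, which upgrades this to $\pi(\ggo_{\gamma}) W_{\alpha_i} \perp W$ for \emph{all} $\gamma \in \Delta(\ggo)$ and \emph{all} $\alpha_i \in \Delta(W)$. Then, writing an arbitrary $w \in W \smallsetminus \{0\}$ as $w = w_1 + \cdots + w_r$ with $w_k \in W_{\alpha_k}$ according to (\ref{Wdec}), I would expand $\ipa{\pi(Y)w}{w} = \sum_{k,l} \ipa{\pi(Y)w_k}{w_l}$ and observe that every summand vanishes, since $\pi(Y)w_k \in W^{\perp}$ while $w_l \in W$. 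By the observation above, $\mm_{\ggo}(w) \in \ag$; as $W$ is $\GrpA$-invariant by hypothesis, $W$ is nice.

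For the ``only if'' direction, assume $W$ is nice, fix $\alpha_i, \alpha_j \in \Delta(W)$ with $\gamma := \alpha_j - \alpha_i \in \Delta(\ggo)$ (so $\gamma \neq 0$ and $\alpha_i \neq \alpha_j$), and fix $Y \in \ggo_{\gamma}$ and $w_i \in W_{\alpha_i}$; the aim is to show that $w := \Proj_W \pi(Y)w_i$ vanishes, which is exactly $\pi(Y)w_i \perp W$. By Lemma \ref{lema1}, if $w \neq 0$ then $w$ is an $\ag$-weight vector of weight $\gamma + \alpha_i = \alpha_j$. Arguing by contradiction, suppose $w \neq 0$ and test niceness on $u := w_i + w \in W$: since $\mm_{\ggo}(u) \in \ag$, we must have $\ipa{\pi(Y)u}{u} = 0$. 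The role of this particular $u$ is that when $\ipa{\pi(Y)u}{u}$ is expanded into four terms, Proposition \ref{resumen}, item \ref{sumaraizpesoR}, together with the orthogonality of distinct $\ag$-weight spaces of $V$, kills three of them (using $\gamma \neq 0$, hence $\alpha_j \neq \alpha_i$ and $\gamma + \alpha_j \notin \{\alpha_i, \alpha_j\}$), leaving only $\ipa{\pi(Y)w_i}{w} = ||w||^2$, which comes from $\pi(Y)w_i = w + w'$ with $w' \in W^{\perp}$. Hence $||w||^2 = 0$, a contradiction, so $w = 0$; since $Y \in \ggo_{\gamma}$ and $w_i \in W_{\alpha_i}$ were arbitrary, $\pi(\ggo_{\gamma})W_{\alpha_i} \perp W$.

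The computations are all routine weight bookkeeping, so I do not anticipate a genuine obstacle. The one step that requires a moment's thought is the choice of test vector $u = w_i + w$ in the ``only if'' direction: it is engineered so that the single cross term $\ipa{\pi(Y)w_i}{w}$ --- precisely the term that detects a nonzero $W$-component of $\pi(Y)w_i$ --- is the unique part of $\ipa{\pi(Y)u}{u}$ not forced to vanish by weight considerations, which is what turns niceness into the desired orthogonality.
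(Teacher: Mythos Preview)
Your proof is correct and follows essentially the same strategy as the paper: the ``if'' direction is identical (upgrade via Lemma~\ref{lema2}, then expand $\ipa{\pi(Y)w}{w}$ in weight components), and the ``only if'' direction uses the same four-term expansion of $\ipa{\pi(Y)u}{u}$ with three terms killed by weight orthogonality. The only difference is the choice of test vector: the paper takes $u=w_i+w_j$ for an \emph{arbitrary} $w_j\in W_{\alpha_j}$ and concludes $\ipa{\pi(Y)w_i}{w_j}=0$ for all such $w_j$, whereas you take $u=w_i+\Proj_W\pi(Y)w_i$ and read off $\|\Proj_W\pi(Y)w_i\|^2=0$ directly---a slightly more economical variant of the same idea.
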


\begin{proof}
 We suppose that $W$ is nice and let $\alpha_i$, $\alpha_j$ $\in \Delta(W)$ be such that $\alpha_j - \alpha_i \in \Delta(\ggo)$, say $\alpha_j - \alpha_i = \gamma$ with $\gamma\in \Delta(\ggo)$. We must prove that $\Proj_{W} \pi(Y)w_i=0$ with $Y \in \ggo_{\gamma}$ and $w_i\in W_{\alpha_{i}}$. Since $\Proj_{W} \pi(Y)w_i \in W_j$ (from Lemma \ref{lema1}), it will thus be sufficient to prove that
 \begin{equation*}
    \ipa{\Proj_{W} \pi(Y)w_i}{w_j}=0
 \end{equation*}
for any $w_j \in W_j$.
Consider a $w_j \in W_j$ and set $w=w_i+w_j$. Since $W$ is nice, $\mm_{\ggo}(w)\in \ag$ and using the Decomposition (\ref{decGGO}) we have
\begin{eqnarray}
\nonumber 0 &=& \ipa{\mm_{\ggo}(w)}{Y} \\
\nonumber &=& \ipa{\pi(Y)(w_i+w_j)}{w_i+w_j}\\
\nonumber &=& \ipa{\pi(Y)w_i}{w_i}+\ipa{\pi(Y)w_i}{w_j}\\
\nonumber & & +\ipa{\pi(Y)w_j}{w_i}+\ipa{\pi(Y)w_j}{w_j}\\
          &=& \ipa{\pi(Y)w_i}{w_j}+\ipa{\pi(Y)w_j}{w_i}
\end{eqnarray}

where we have used that $\ipa{\pi(Y)w_i}{w_i} = \ipa{\pi(Y)w_j}{w_j} = 0$ from Corollary \ref{critpesos}.

Note that $\ipa{\pi(Y)w_j}{w_i}=0$. Indeed, suppose, contrary to our claim, that $\ipa{\pi(Y)w_j}{w_i}\neq 0$. As in the proof of the Lemma \ref{lema2}, we have $\Proj_{W}\pi(Y)w_j$ is a weight vector of weight $\gamma + \alpha_j=\alpha_i$. But, $\gamma + \alpha_i = \alpha_j$ since $\alpha_j - \alpha_i = \gamma$, we now have a contradiction to that $\gamma \neq 0$.

According to the above, we have $0=\ipa{\pi(Y)w_i}{w_j}=\ipa{\Proj_{W}\pi(Y)w_i}{w_j}$

We now proceed to show necessary part. Let $w\in W$, say $w=\sum w_i$ and let $Y\in \ggo_{\gamma}$ with $\gamma \in \Delta(\ggo)$. We must see that $\ipa{\mm_{\ggo}(w)}{Y}=0$
\begin{eqnarray*}
  \ipa{\mm_{\ggo}(w)}{Y} &=& \frac{1}{||w||^2} \ipa{\pi(Y)w}{w}\\
                            &=& \frac{1}{||w||^2} \sum_{i,j} \ipa{\pi(Y)w_i}{w_j}.
\end{eqnarray*}
From Lemma \ref{lema2}, we have the proof.
\end{proof}

By the $\theta$-invariance of $\ipd$, the fact that $\theta(\ggo_{\gamma})=\ggo_{-\gamma}$ for each $\gamma \in \Delta(\ggo)$  and recalling that the image of the moment map is contained in $\pg$, we have:

\begin{corollary}\label{coro3}
 An $\GrpA$-invariant subspace $W$ of $V$ is nice if and only if for all $\gamma \in \Delta(\ggo)^{+}$
$$
\pi(\ggo_{\gamma}) W \perp W
$$
\end{corollary}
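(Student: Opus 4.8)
The plan is to obtain Corollary~\ref{coro3} by first repackaging what Theorem~\ref{nicebasis} and Lemma~\ref{lema2} already give, and then cutting the full root system down to the positive roots by means of the Cartan involution $\theta$.

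First I would record the reformulation. By Theorem~\ref{nicebasis} the subspace $W$ is nice if and only if $\pi(\ggo_{\gamma})W_{\alpha_i}\perp W$ holds for every pair $\alpha_i,\alpha_j\in\Delta(W)$ with $\gamma:=\alpha_j-\alpha_i\in\Delta(\ggo)$; Lemma~\ref{lema2} upgrades this to $\pi(\ggo_{\gamma})W_{\alpha_i}\perp W$ for \emph{every} $\gamma\in\Delta(\ggo)$ and every $\alpha_i\in\Delta(W)$ (the reverse implication between these two statements being trivial). Since $W=W_{\alpha_1}\ortsum\cdots\ortsum W_{\alpha_r}$ and $\pi(\ggo_{\gamma})W=\sum_i\pi(\ggo_{\gamma})W_{\alpha_i}$, this is in turn equivalent to
$$
\pi(\ggo_{\gamma})W\perp W\quad\text{for every }\gamma\in\Delta(\ggo).
$$
So it only remains to show that the conditions for $\gamma\in\Delta(\ggo)^{+}$ already force those for all of $\Delta(\ggo)$. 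One direction is immediate because $\Delta(\ggo)^{+}\subseteq\Delta(\ggo)$. For the other, assume $\pi(\ggo_{\delta})W\perp W$ for all $\delta\in\Delta(\ggo)^{+}$ and let $\gamma\in\Delta(\ggo)^{-}$, so that $-\gamma\in\Delta(\ggo)^{+}$. Given $Y\in\ggo_{\gamma}$ we have $\theta(Y)\in\ggo_{\theta(\gamma)}=\ggo_{-\gamma}$ by Proposition~\ref{resumen}(4), and $\pi(Y)^{\trans}=-\pi(\theta(Y))$ by Proposition~\ref{resumen}(1); hence for all $w,w'\in W$,
$$
\ipa{\pi(Y)w}{w'}=\ipa{w}{\pi(Y)^{\trans}w'}=-\ipa{w}{\pi(\theta(Y))w'}=0 ,
$$
because $\pi(\theta(Y))w'\in\pi(\ggo_{-\gamma})W$, which is orthogonal to $W\ni w$ by hypothesis. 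Thus $\pi(\ggo_{\gamma})W\perp W$ for every $\gamma\in\Delta(\ggo)^{-}$ as well, and since $\Delta(\ggo)=\Delta(\ggo)^{+}\cupdot\Delta(\ggo)^{-}$ the equivalence is established; this is exactly where the $\theta$-invariance of $\ipd$ enters, namely through Proposition~\ref{resumen}(1).

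There is no serious obstacle here: the argument is essentially bookkeeping with $\theta$. The only points to watch are to invoke Lemma~\ref{lema2} in the correct direction, and to note that the "image of $\mm_{\ggo}$ lies in $\pg$" remark in fact gives a second, more direct route to the positivity reduction: since $\pg=\ag\ortsum\bigoplus_{\gamma\in\Delta(\ggo)^{+}}\{\,Z-\theta(Z):Z\in\ggo_{\gamma}\,\}$ and $\mm_{\ggo}(w)\in\pg$, one has $\mm_{\ggo}(w)\in\ag$ iff $\ipa{\mm_{\ggo}(w)}{Z-\theta(Z)}=0$ for all $\gamma\in\Delta(\ggo)^{+}$ and $Z\in\ggo_{\gamma}$, and by Proposition~\ref{resumen}(1) this pairing equals $\tfrac{2}{\|w\|^2}\ipa{\pi(Z)w}{w}$; so $W$ is nice iff $\ipa{\pi(Z)w}{w}=0$ for all $w\in W$ and all such $Z$. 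Replacing $w$ by $\exp(X)w$ with $X\in\ag$ (legitimate since $W$ is $\GrpA$-invariant) and using $\pi(\ggo_{\gamma})W_{\alpha_i}\subseteq V_{\gamma+\alpha_i}$ together with linear independence of the exponentials $e^{\ipda{X}{2\alpha_i+\gamma}}$, each cross term $\ipa{\pi(Z)w_i}{w_j}$ must vanish separately, which is precisely $\pi(\ggo_{\gamma})W\perp W$. Either way, combining the reformulation with the positivity reduction yields the stated equivalence.
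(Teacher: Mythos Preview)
Your proposal is correct and follows essentially the same approach the paper sketches in the sentence preceding the corollary: combine Theorem~\ref{nicebasis} with Lemma~\ref{lema2} to rephrase niceness as $\pi(\ggo_{\gamma})W\perp W$ for all $\gamma\in\Delta(\ggo)$, and then use $\theta(\ggo_{\gamma})=\ggo_{-\gamma}$ together with $\pi(Y)^{\trans}=-\pi(\theta(Y))$ to reduce to positive roots. Your first route is exactly this; the alternative second route (via $\mm_{\ggo}(w)\in\pg$ and the decomposition $\pg=\ag\ortsum\bigoplus_{\gamma\in\Delta(\ggo)^{+}}\{Z-\theta(Z):Z\in\ggo_{\gamma}\}$) is what the paper's phrase ``recalling that the image of the moment map is contained in $\pg$'' points to, and you have unpacked it correctly, though the exponential-independence step there essentially reproves the content of Theorem~\ref{nicebasis} and is not needed once you have the first route. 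One small typo: in the second route the pairing $\ipa{\mm_{\ggo}(w)}{Z-\theta(Z)}$ should use $\ipd$ on $\ggo$, i.e.\ $\ipda{\mm_{\ggo}(w)}{Z-\theta(Z)}$.
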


An obvious but very useful corollary is the following

\begin{corollary}\label{niceresta}
Let $W$ be a $\GrpA$-invariant subspace of $V$. If for all $\alpha_i$ and $\alpha_j$ in $\Delta(W)$, $\alpha_i-\alpha_j \notin \Delta(\ggo)$, then $W$ is nice.
\end{corollary}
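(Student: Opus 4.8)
The plan is to read this off directly from Theorem \ref{nicebasis}. That theorem says that an $\GrpA$-invariant subspace $W$ is nice precisely when, for every pair $\alpha_i,\alpha_j\in\Delta(W)$ \emph{with $\alpha_j-\alpha_i\in\Delta(\ggo)$}, one has $\pi(\ggo_{\gamma})W_{\alpha_i}\perp W$ for $\gamma=\alpha_j-\alpha_i$. Under the standing hypothesis that $\alpha_i-\alpha_j\notin\Delta(\ggo)$ for all $\alpha_i,\alpha_j\in\Delta(W)$ --- equivalently $\alpha_j-\alpha_i\notin\Delta(\ggo)$ as well, since $\Delta(\ggo)=-\Delta(\ggo)$ by Proposition \ref{resumen} --- there are no such pairs at all, so the condition of Theorem \ref{nicebasis} holds vacuously and $W$ is nice. (The same one-line remark applies to the reformulation in Corollary \ref{coro3}.)

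An equally short alternative I would offer is a direct computation from the defining equation of the moment map. Write $w=\sum_i w_i\in W$ with $w_i\in W_{\alpha_i}$, fix $\gamma\in\Delta(\ggo)$ and $Y\in\ggo_{\gamma}$; by Proposition \ref{resumen}, item \ref{sumaraizpesoR}, each $\pi(Y)w_i$ lies in $V_{\gamma+\alpha_i}$, and $\gamma+\alpha_i$ differs from every $\alpha_j$ (the case $j=i$ because $\gamma\neq 0$, and $\gamma+\alpha_i=\alpha_j$ would force $\alpha_j-\alpha_i=\gamma\in\Delta(\ggo)$, excluded by hypothesis). Distinct weight spaces being $\ip$-orthogonal, $\ipa{\pi(Y)w_i}{w_j}=0$ for all $i,j$, hence $\ipa{\pi(Y)w}{w}=0$; since this holds for all $Y\in\ggo_{\gamma}$ and all $\gamma\in\Delta(\ggo)$, the defining equation (\ref{momentmap}) together with Decomposition (\ref{decGGO}) yields $\mm_{\ggo}(w)\in\ag$, i.e. $W$ is nice.

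I do not expect any real obstacle: the whole point of the corollary is that the hypothesis empties the index set over which the niceness condition of Theorem \ref{nicebasis} must be verified. The only thing to be mildly careful about is the harmless bookkeeping that $\gamma+\alpha_i$ never lands back in $\Delta(W)$, which uses nothing beyond $0\notin\Delta(\ggo)$ and the symmetry $\Delta(\ggo)=-\Delta(\ggo)$ from Proposition \ref{resumen}.
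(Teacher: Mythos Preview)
Your proposal is correct and matches the paper's own treatment: the paper calls this ``an obvious but very useful corollary'' of Theorem~\ref{nicebasis} and omits the proof, which is exactly the vacuous-condition argument you give. Your alternative direct computation is also fine and is essentially the necessary-part argument inside the proof of Theorem~\ref{nicebasis}, short-circuited by the hypothesis.
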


We can now prove the equivalence between the two possible notions of nice element.

\begin{proposition}\label{niceelement}
Let $w = w_{i_{1}} + \ldots + w_{i_{s}}$ in $V$ with each $w_{i_{j}}$ a non-null vector in $V_{\alpha_{i_{j}}}$ (by Equation (\ref{decV})) and such that $\mm_{\ggo}(\GrpA \cdot w) \subset \ag$. Then $W = \RR w_{{i_{1}}}\, \ortsum \, \ldots \, \ortsum \, \RR w_{{i_{s}}}$ is a nice space.
\end{proposition}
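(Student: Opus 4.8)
The plan is to verify that $W$ satisfies the weight-theoretic criterion of Corollary~\ref{coro3}, i.e.\ that $\pi(\ggo_{\gamma})W \perp W$ for every $\gamma \in \Delta(\ggo)^{+}$. That $W$ is $\GrpA$-invariant is clear, since each $w_{i_{j}}$ is a weight vector and so $\exp(X)\cdot w_{i_{j}} = \e^{\ipda{X}{\alpha_{i_{j}}}}w_{i_{j}} \in \RR w_{i_{j}}$ for all $X \in \ag$; moreover the $\alpha_{i_{j}}$ are distinct (they are the weights occurring in $w$), so $W_{\alpha_{i_{j}}} = \RR w_{i_{j}}$ and $\Delta(W) = \{\alpha_{i_{1}},\ldots,\alpha_{i_{s}}\}$. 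The first substantive step is to recast the hypothesis $\mm_{\ggo}(\GrpA\cdot w) \subset \ag$ as a family of scalar identities. Writing $\ggo = \ggo_{0}\oplus\bigoplus_{\gamma\in\Delta(\ggo)}\ggo_{\gamma}$ (an orthogonal decomposition) and using that $\mm_{\ggo}(u)\in\pg$ always, that $\pg\cap\ggo_{0} = \ag$ while $\ggo_{0}\ominus\ag\subseteq\kg$ (so the $\ggo_{0}$-component of $\mm_{\ggo}(u)$ automatically lands in $\ag$), and that $\theta(\ggo_{\gamma})=\ggo_{-\gamma}$ together with $\theta(\mm_{\ggo}(u))=-\mm_{\ggo}(u)$ (so the $\ggo_{\gamma}$- and $\ggo_{-\gamma}$-components of $\mm_\ggo(u)$ vanish together), one sees that $\mm_{\ggo}(u)\in\ag$ if and only if $\ipda{\mm_{\ggo}(u)}{Y}=0$ for all $Y\in\ggo_{\gamma}$, $\gamma\in\Delta(\ggo)^{+}$. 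Decomposing $Y = Y_{\kg}+Y_{\pg}$ and using the transpose identity $\pi(X)^{\trans}=-\pi(\theta X)$ from Proposition~\ref{resumen}, a short computation gives $\ipda{\mm_{\ggo}(u)}{Y} = \tfrac{1}{||u||^{2}}\ipa{\pi(Y)u}{u}$ for $Y\in\ggo_{\gamma}$. Hence the hypothesis becomes: $\ipa{\pi(Y)(\exp(X)\cdot w)}{\exp(X)\cdot w}=0$ for all $X\in\ag$, $\gamma\in\Delta(\ggo)^{+}$ and $Y\in\ggo_{\gamma}$.

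Next I would expand $\exp(X)\cdot w = \sum_{j}\e^{\ipda{X}{\alpha_{i_{j}}}}w_{i_{j}}$, so that
\[
\ipa{\pi(Y)(\exp(X)\cdot w)}{\exp(X)\cdot w} = \sum_{p,q}\e^{\ipda{X}{\alpha_{i_{p}}+\alpha_{i_{q}}}}\ipa{\pi(Y)w_{i_{p}}}{w_{i_{q}}}.
\]
By Proposition~\ref{resumen}(\ref{sumaraizpesoR}), $\pi(Y)w_{i_{p}}\in V_{\gamma+\alpha_{i_{p}}}$, so the $(p,q)$-term is $0$ unless $\alpha_{i_{q}}=\gamma+\alpha_{i_{p}}$; since $\gamma\neq 0$ and the $\alpha_{i_{j}}$ are distinct, for each such $p$ there is exactly one such $q$, call it $q(p)$, and the identity collapses to $\sum_{p}\e^{\ipda{X}{2\alpha_{i_{p}}+\gamma}}\ipa{\pi(Y)w_{i_{p}}}{w_{i_{q(p)}}}=0$ for all $X\in\ag$, the sum being over those $p$ with $\alpha_{i_{p}}+\gamma\in\Delta(W)$. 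The decisive observation is that the vectors $2\alpha_{i_{p}}+\gamma\in\ag$ appearing here are pairwise distinct (because the $\alpha_{i_{p}}$ are), hence, as $\ipd$ is nondegenerate, induce pairwise distinct linear functionals on $\ag$; since the functions $X\mapsto\e^{\ipda{X}{\mu}}$ attached to distinct $\mu$ are linearly independent, every coefficient $\ipa{\pi(Y)w_{i_{p}}}{w_{i_{q(p)}}}$ must vanish. Together with the automatic orthogonality $\ipa{\pi(Y)w_{i_{p}}}{w_{i_{r}}}=0$ whenever $\alpha_{i_{r}}\neq\gamma+\alpha_{i_{p}}$, this yields $\pi(\ggo_{\gamma})W\perp W$ for every $\gamma\in\Delta(\ggo)^{+}$, and Corollary~\ref{coro3} concludes that $W$ is nice.

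The bookkeeping — the root-space decomposition of $\ggo$, the identification of which components of $\mm_{\ggo}(u)$ can be nonzero, and the bilinear expansion — is routine. The one genuinely useful point, and the step I would be most careful about, is the passage from the single scalar identity supplied by the hypothesis to the vanishing of each individual $\ipa{\pi(Y)w_{i_{p}}}{w_{i_{q(p)}}}$: this works because restricting to the one-parameter subgroups $t\mapsto\exp(tX)$ turns that identity into an exponential polynomial whose frequencies $\ipda{X}{2\alpha_{i_{p}}+\gamma}$ are distinct for generic $X$, so linear independence of exponentials forces each coefficient to be zero. This is exactly what upgrades the information that $\mm_{\ggo}$ takes values in $\ag$ merely along $\GrpA\cdot w$ to the statement that it does so on all of $W$.
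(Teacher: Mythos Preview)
Your argument is correct and follows the same route as the paper: reduce via Corollary~\ref{coro3} to showing $\ipa{\pi(Y)w_{i_p}}{w_{i_q}}=0$, expand the hypothesis into the identity $\sum_{p}\e^{\ipda{X}{2\alpha_{i_p}+\gamma}}\ipa{\pi(Y)w_{i_p}}{w_{i_{q(p)}}}=0$ for all $X\in\ag$, and then extract the vanishing of each coefficient. The only cosmetic difference is in that last extraction: where you invoke linear independence of the exponentials $X\mapsto\e^{\ipda{X}{\mu}}$ directly, the paper argues by contradiction, picks an exposed point $\alpha_{i_m}$ of the set of surviving weights, and lets $t\to-\infty$ along a supporting direction to kill all but the $\alpha_{i_m}$-term --- which is, of course, just one proof of that linear independence.
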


\begin{proof}
From Corollary \ref{coro3}, we need to prove that $\ipa{\pi(\ggo_{\lambda})w_{i_{j}}}{w_{i_{k}}}=0$, for all $\lambda \in \Delta(\ggo)$ and $1 \leq j \leq k \leq s$. Suppose the proposition was false. Then we could find $\lambda \in \Delta(\ggo)$, $Y \in \ggo_{\lambda}$ and $j$ and $k$ such that
$$
\ipa{\pi(Y)w_{i_{j}}}{w_{i_{k}}}\neq 0.
$$
Reasoning as in the above results, we have $\lambda+\alpha_{i_{j}}=\alpha_{i_{k}}$, and so, the set
$$
\Omega=\{\alpha_{i_{j}} : \lambda + \alpha_{i_{j}} = \alpha_{i_{k}} \in \Delta(W) \mbox{ and } \ipa{\pi(Y)w_{i_{j}}}{w_{i_{k}}}\neq 0\}
$$
is nonempty. As $\ipda{\mm_{\ggo}(\GrpA \cdot w)}{Y}=0$, we have for all $X \in \ag$
\begin{eqnarray}
\nonumber  0 &=& \ipa{\pi(Y)\exp(X)w}{\exp(X)w} \\
\nonumber    &=&\sum_{j,k}\euler^{\ipda{X}{\alpha_{i_{j}}+\alpha_{i_{k}}}}\ipa{\pi(Y)w_{i_{j}}}{w_{i_{k}}}\\
\nonumber    &=&\sum_{\alpha_{i_{j}}\in \Omega}\euler^{\ipda{X}{\lambda+2\alpha_{i_{j}}}}\ipa{\pi(Y)w_{i_{j}}}{w_{i_{k}}}\\
\label{equation0}  &=& \sum_{\alpha_{i_{j}}\in \Omega}\euler^{\ipda{X}{2\alpha_{i_{j}}}}\ipa{\pi(Y)w_{i_{j}}}{w_{i_{k}}}
\end{eqnarray}

We consider the convex hull of $\Omega$ and let $\alpha_{i_{m}}$ be an exposed point of such convex set. So, there exist $H \in \ag$ and $h\in \RR$ such that $\ipda{X}{H}\geq h$ for all $X\in \CH(\Omega)$ and the equality holds if and only if $X = \alpha_{i_{m}}$.

From Equation \ref{equation0} follows that
\begin{eqnarray*}
  0& = & \frac{1}{\euler^{\ipda{tH}{2\alpha_{i_{m}}}}}\sum_{\alpha_{i_{j}}\in \Omega}\euler^{\ipda{tH}{2\alpha_{i_{j}}}}\ipa{\pi(Y)w_{i_{j}}}{w_{i_{k}}},
\end{eqnarray*}
and by letting $t \to -\infty$, we obtain
\begin{eqnarray*}
  0 &=&  \ipa{\pi(Y)w_{i_{m}}}{w_{i_{n}}}
\end{eqnarray*}
which is contrary to $\alpha_{i_{m}} \in \Omega$.
\end{proof}

\begin{corollary}\label{ultimocor}
 Let $\alpha_i$ and $\alpha_j$ be such that $W=W_{\alpha_{i}}\ortsum W_{\alpha_{j}}$ is a nice space. Then $\beta:=\mcc(\{\alpha_i,\alpha_j\})$ defines a stratum, i.e. $\mathscr{S}_{\beta} \neq \phi$.
\end{corollary}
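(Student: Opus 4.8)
The plan is to exhibit, in every case, a nonzero $v$ that is a critical point of $\F_{\ggo}$ with $\mm_{\ggo}(v)=\beta$; since the set of critical points with $\mm_{\ggo}$-value in $\Ad(\GrpK)\beta$ is exactly $\mathscr{C}(\beta)$ (Theorem \ref{stratification}), this produces a point of $\mathscr{C}(\beta)$ and hence shows $\mathscr{S}_{\beta}\neq\phi$ (replacing $\beta$ by its unique $\Ad(\GrpK)$-conjugate in $\overline{\ag^{+}}$ if one wants to stay inside the chosen indexing set; this changes nothing, as neither $\mcc$ nor the niceness hypothesis is affected). First I would fix nonzero vectors $w_i\in W_{\alpha_i}$ and $w_j\in W_{\alpha_j}$ and set $w=w_i+w_j\in W\smallsetminus\{0\}$. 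Since $W$ is nice, $w$ is a nice element with $\mathfrak{R}(w)=\{\alpha_i,\alpha_j\}$, so $\CH(\mathfrak{R}(w))$ is the segment joining $\alpha_i$ and $\alpha_j$; as $\alpha_i\neq\alpha_j$, its relative boundary is exactly the pair of exposed points $\{\alpha_i,\alpha_j\}$. Hence $\beta=\mcc(\mathfrak{R}(w))$ either lies in $\inte(\CH(\mathfrak{R}(w)))$ or coincides with one of $\alpha_i$, $\alpha_j$, and I would split into these two cases.

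If $\beta\in\inte(\CH(\mathfrak{R}(w)))$, then Proposition \ref{mmAorbit} gives an $X\in\ag$ with $\mm_{\ag}(\exp(X)\cdot w)=\beta$; since $\beta$ is the point of $\CH(\mathfrak{R}(w))=\overline{\mm_{\ag}(\GrpA\cdot w)}$ closest to the origin, $v:=\exp(X)\cdot w$ realizes the minimum of $\F_{\ag}|_{\GrpA\cdot w}$ and is therefore a critical point of $\F_{\ag}$ by Theorem \ref{stratification}(\ref{teocrit}) — this is precisely the argument in the proof of Proposition \ref{onedirection}. Because $v\in W$ is nice, $\mm_{\ggo}(v)=\mm_{\ag}(v)=\beta$, and a second application of Theorem \ref{stratification}(\ref{teocrit}) (the condition $\pi(\mm_{\ag}(v))v=\lambda v$ is the same condition for $\mm_{\ggo}(v)$) shows that $v$ is a critical point of $\F_{\ggo}$. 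If instead $\beta=\alpha_i$ (the case $\beta=\alpha_j$ being identical), I take $v=w_i\in W_{\alpha_i}\smallsetminus\{0\}$: by Corollary \ref{critpesos} this $v$ is already a critical point of $\F_{\ggo}$ with $\mm_{\ggo}(v)=\alpha_i=\beta$, so nothing further is needed.

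In both cases $v$ is a critical point of $\F_{\ggo}$ with $\mm_{\ggo}(v)=\beta\in\Ad(\GrpK)\beta$, hence $v\in\mathscr{C}(\beta)$ and consequently $\mathscr{S}_{\beta}\neq\phi$, i.e.\ $\beta$ defines a stratum. I do not anticipate a substantive obstacle: the proof is essentially an assembly of Propositions \ref{mmAorbit}, \ref{onedirection} and Corollary \ref{critpesos}. The only points that require a little care are the passage, in the interior case, from a critical point of $\F_{\ag}$ to a critical point of $\F_{\ggo}$ (which is where niceness of $v$ is used, via $\mm_{\ag}(v)=\mm_{\ggo}(v)$), and the harmless bookkeeping of conjugating $\beta$ into $\overline{\ag^{+}}$ so that $\mathscr{S}_{\beta}$ is literally one of the strata in Theorem \ref{stratification}(\ref{defestratos}).
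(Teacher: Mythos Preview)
Your proof is correct and follows essentially the same approach as the paper: split into the case where $\beta$ is one of the endpoints $\alpha_i,\alpha_j$ (handled by Corollary~\ref{critpesos}) and the case where $\beta$ lies in the relative interior of the segment (handled by Proposition~\ref{onedirection}, whose argument you unpack, together with Theorem~\ref{stratification}). The paper's proof is the same two-case argument stated in two sentences; your version is simply more explicit, and your remark about conjugating $\beta$ into $\overline{\ag^{+}}$ is a harmless bookkeeping point the paper leaves implicit.
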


\begin{proof}
  If $\mcc(\{\alpha_i,\alpha_j\})$ is $\alpha_i$ or $\alpha_j$, we have from Corollary \ref{critpesos} the conclusion for this case. In the other case, $\mcc(\{\alpha_i,\alpha_j\})$ is in the interior of $\CH\{\alpha_i,\alpha_j\}$ and so, Proposition \ref{onedirection} and Theorem \ref{stratification} complete the proof.
\end{proof}

\section{Applications}\label{seccionaplicaciones}

\subsection{Ternary forms}

In this part, we want to discuss some applications of the above results to \textit{classical invariant theory}. In classical invariant theory one studies polynomials and their intrinsic and geometrical properties; we mean those properties which are unaffected by a change of variables and are purely geometric (for instance, multiplicities of roots). Such theory was a focal and major topic in the \nth{19} century and in the beginning of the \nth{20} century, and was strongly impacted by Hilbert's contributions (which were used by David Mumford to develop the modern geometric invariant theory).

We prove a result that describes the stratifying set for the natural action of $\mathrm{GL}_{3}(\RR)$ on ternary forms in a very simple way (see Theorem \ref{straternary}) and later we give a classification of distinguished orbits in the null cone of real ternary quartics (the results can easily be extended to the complex case).

Along this section, let $\GrpG := \Glr$, $\ggo := \Lie(G)= \glg_{n}(\RR)$ and we consider the Cartan decomposition of $\ggo$ given by $\sog(n) \ortsum \sym(n)$ ($\kg := \sog(n)$ and $\pg := \sym(n)$). Let us denote by $\ipd$ the usual inner product on $\ggo$
$$
\ipda{X}{Y}=\tr(X Y^{\trans}),\, \forall X, Y \in \ggo.
$$
Let $\ag=\{(a_1 \ldots a_n):=\diag (a_1 \ldots a_n) : a_i \in \RR\}$ and let $\ag^{+}$ denote the usual Weyl chamber of $\mathfrak{gl}_{n}(\RR)$
$$
\ag^{+}=\{(a_1,\ldots,a_n  ) \in \ag : a_1 < \ldots < a_n \}.
$$
Thus, a restricted-root space decomposition for $\ggo$ is given for
\begin{equation*}
    \ggo = \ag \ortsum \bigoplus^{\perp}_{i \neq j} \RR E_{ij}
\end{equation*}
with $\Delta(\ggo) = \{\gamma_{ij}:=E_{ii}-E_{jj} : i \neq j\}$ ($\{E_{ij}\}_{1\leq i,j \leq n}$ is the canonical basis of $\glg_{n}(\RR)$).

Let $V:=\RR[x_1, \ldots , x_n]_{d}$, the vector space of all homogeneous polynomials of degree $d$ on $n$ variables (\textit{$n$-ary forms of degree $d$}) and we set $\ip$ the inner product such that monomials of $V$ are orthogonal and $||x_{1}^{d_1}... \, x_{n}^{d_n}||^2=d_{1}!\ldots d_{n}!$ ($\sum d_i =d)$. $\ip$ is easily seen to have the required conditions.

The action of $\GrpG$ on $V$ is given by linear change of variables,
\begin{equation*}
g \cdot p ( x_1, \ldots , x_n) = p \left( g^{-1} \left[\begin{array}{c} x_1 \\ \vdots \\ x_n \end{array}\right] \right),
\end{equation*}
for all $g \in \GrpG$ and $p \in V$. It follows easily that
\begin{equation*}
  \pi(E_{ij})p = \ddt|_{0} p( \e^{-tE_{ij}}\bullet) = -x_{j} \tfrac{\partial p}{\partial x_i},
\end{equation*}
for all $p \in V$. Hence, we obtain that the moment map to the action of $\mathrm{GL}_n(\RR)$ on $\RR[x_1,\ldots,x_n]_{d}$ is given by
$$
\mm_{\mathfrak{gl}_{n}(\RR)}(p)=\frac{-1}{||p||^2}\left(\ipa{x_j \frac{\partial p}{\partial x_i}}{p}\right), \, \forall p \in \RR[x_1,\ldots,x_n]_d \setminus \{0\}.
$$

The basis of weight vectors of $V$ are monomials and we have
\begin{equation*}
  \pi(Y)x_{1}^{d_1} \ldots x_{n}^{d_n} = - \left(\sum_{i=1}^{n } y_i d_i\right) x_{1}^{d_1} \ldots x_{n}^{d_n}
\end{equation*}
for all $Y=(y_1, \ldots, y_n)\in \ag$. Consequently, $\Delta(V) =\{ \alpha=-(d_1,...,d_n) \in \ag : \sum d_i = d\}$. In the sequel, given $\alpha \in \Delta(V)$, say $\alpha=-(d_1,\ldots,d_n)$ with $\sum d_i =d$, we will denote by $\textbf{x}^{\alpha}$ the monomial $x_1^{d_1}, \ldots , x_n^{d_n}$.

\begin{remark}
Since the identity matrix, $\Id$, which is in $\ag$, acts as $\pi(\Id)p=-p$ for any $p \in V$, from Theorem \ref{stratification} (\ref{teocrit}), it follows that critical points of $\F_{\mathfrak{gl}(n,\RR)}$ and $F_{\mathfrak{sl}(n,\RR)}$ are the same. Moreover, the respective stratifications coincide up to translation in $\mathscr{B}$ sets.
\end{remark}

We now consider the case of ternary forms (i.e. $n=3$). Binary forms (i.e. $n=2$) were studied by Ness in \cite[Lemma 10.4]{NESS1}, where she gives a description of non-minimal critical points of $\F_{\ggo}$.

First observe that all weights are in the equilateral triangle with vertices $-(d,0,0)$, $-(0,d,0)$ and $-(0,0,d)$. It follows that if $\Phi \subseteq  \Delta(V)$, the $\CH(\Phi)$ is contained in such triangle. In the sequel, $\beta_{0}$ will denote the barycenter of triangle; $\beta_{0}:=-(\frac{d}{3},\frac{d}{3},\frac{d}{3})$. Thus, we have that $\beta=\mcc(\Phi)$ if and only if $\beta$ is the unique vector in $\CH(\Phi)$ closest to $\beta_{0}$. Accordingly, $\mcc(\Phi)$ is either equal to $\beta_{0}$, or there are $\alpha_i,\, \alpha_j \in \Phi$ such that $\mcc(\Phi)=\mcc(\{\alpha_i,\alpha_j\})$.  We can also realize the barycenter as the minimal convex combination of two weights as follows:
\begin{equation*}
    \beta_{0} = \mcc(\{\alpha_i , \alpha_j\}) \mbox{ with } \left\{ \begin{array}{l} \alpha_i=-(k,k,0) \mbox{ and }\alpha_j=-(0,0,2k) \mbox{ if } d=2k, \\ \alpha_i=-(k,k,1) \mbox{ and }\alpha_j=-(0,0,2k+1) \mbox{ if } d=2k+1, \end{array}\right.
\end{equation*}
with the property that $W=\RR \textbf{x}^{\alpha_i} \, \ortsum \, \RR \textbf{x}^{\alpha_j}$ is a nice space. We will see that this holds for every $\beta$ such that $\mathcal{S}_{\beta}\neq \phi$.

\begin{lemma}
  Let $\alpha_i$, $\alpha_j$ in $\Delta(V)$ and $W=\spanv_{\RR}\{\textbf{\textsc{x}}^{\alpha_i},\textbf{\textsc{x}}^{\alpha_j}\}$. Then $W$ is a nice space if and only if $\alpha_j - \alpha_i \notin \Delta(\ggo)$.
\end{lemma}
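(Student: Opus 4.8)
The plan is to derive both implications from the weight-theoretic characterisations of nice spaces in Section~\ref{seccionsabernice}, using the explicit data of the $\mathrm{GL}_{3}(\RR)$-action on $\RR[x_1,x_2,x_3]_d$: namely $\Delta(\ggo)=\{\gamma_{kl}=E_{kk}-E_{ll}:k\neq l\}$, so that $\ggo_{\gamma_{kl}}=\RR E_{kl}$, and $\pi(E_{kl})p=-x_l\,\partial p/\partial x_k$, while the weight vectors of $V$ are the monomials.

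For the forward implication I would simply invoke Corollary~\ref{niceresta}. Assuming $\alpha_j-\alpha_i\notin\Delta(\ggo)$, the symmetry $\Delta(\ggo)=-\Delta(\ggo)$ from Proposition~\ref{resumen}(4) gives $\alpha_i-\alpha_j\notin\Delta(\ggo)$ as well; together with $0\notin\Delta(\ggo)$ this says that no difference of two elements of $\Delta(W)$ lies in $\Delta(\ggo)$, so Corollary~\ref{niceresta} applies and $W$ is nice.

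For the reverse implication I would argue by contraposition and use Theorem~\ref{nicebasis}. Suppose $\gamma:=\alpha_j-\alpha_i=\gamma_{kl}\in\Delta(\ggo)$. Writing $\alpha_i=-(d_1,d_2,d_3)$ so that $\textbf{x}^{\alpha_i}=x_1^{d_1}x_2^{d_2}x_3^{d_3}$, the weight $\alpha_j=\alpha_i+\gamma_{kl}$ corresponds to the monomial $\textbf{x}^{\alpha_j}$ whose exponent vector is $(d_1,d_2,d_3)$ with the $k$-th entry lowered by $1$ and the $l$-th raised by $1$; since $\alpha_j\in\Delta(V)$ this vector is nonnegative, which forces $d_k\geq 1$. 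A one-line computation then gives
\[
\pi(E_{kl})\textbf{x}^{\alpha_i}=-\,x_l\,\frac{\partial}{\partial x_k}\bigl(x_1^{d_1}x_2^{d_2}x_3^{d_3}\bigr)=-\,d_k\,\textbf{x}^{\alpha_j}\neq 0,
\]
so $\pi(\ggo_\gamma)W_{\alpha_i}=W_{\alpha_j}$ is a nonzero subspace of $W$ and in particular is not orthogonal to $W$; Theorem~\ref{nicebasis} then shows that $W$ is not nice.

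There is no genuine difficulty here; the one thing to keep an eye on is that it is exactly the membership $\alpha_j\in\Delta(V)$ that prevents $d_k=0$, i.e.\ that keeps $\pi(E_{kl})\textbf{x}^{\alpha_i}$ from vanishing — this is the mechanism by which the root difference $\alpha_j-\alpha_i\in\Delta(\ggo)$ becomes visible to the moment map. The rest is bookkeeping: matching the root $\gamma_{kl}$ with the operator $-x_l\,\partial/\partial x_k$.
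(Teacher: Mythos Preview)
Your argument is correct and follows essentially the same route as the paper: one direction is Corollary~\ref{niceresta}, and the other is the explicit computation $\pi(E_{kl})\textbf{x}^{\alpha_i}=-d_k\,\textbf{x}^{\alpha_j}\neq 0$ combined with Theorem~\ref{nicebasis}. The paper simply fixes $\gamma=\gamma_{12}$ without loss of generality rather than keeping general indices $k,l$, and your explicit remark that $\alpha_j\in\Delta(V)$ forces $d_k\geq 1$ makes transparent the nonvanishing that the paper leaves implicit.
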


\begin{proof}
  From Corollary \ref{niceresta}, it remains to prove the if part. Let $W$ be a nice space and contrary to our claim, that $\alpha_j - \alpha_i \in \Delta(\ggo)$. Let $\alpha_i=-(m_1,m_2,m_3)$ and $\alpha_j=-(n_1,n_2,n_3)$ and without loss of generality we can assume $\alpha_j - \alpha_i = \gamma_{12}= E_{11}-E_{22}=(1,-1,0) $. So, $m_1=1+n_1$, $m_2=n_2 -1$ and $m_3 = n_3$. Let $Y = E_{12}$, $Y\in \ggo_{\gamma_{12}}$
\begin{eqnarray*}
  \pi(Y)\textbf{x}^{\alpha_i} &=& \pi(E_{12})x^{1+n_1}y^{n_2-1}z^{n_3} \\
       &=&  (1+n_1)x^{n_1}y^{n_2}z^{n_3}\\
       &=&  (1+n_1)\textbf{x}^{\alpha_j}
\end{eqnarray*}
Therefore, $\Proj_{W} \pi(Y)\textbf{x}^{\alpha_j} \neq 0 $, which contradicts the Theorem \ref{nicebasis}.
\end{proof}

\begin{lemma}\label{ternlemma}
  Let $\beta \in \ag$ such that $\mathscr{S}_{\beta} \neq \phi$. Then there exist $\alpha_i$ and $\alpha_j$ in $\Delta(V)$ such that $W=\spanv_{\RR}\{\textbf{\textsc{x}}^{\alpha_i},\textbf{\textsc{x}}^{\alpha_j}\}$ is a nice space and $\beta=\mcc(\{\alpha_i,\alpha_j\})$. In particular, there exists $q\in W$ such that $q$ is a critical point of $\F_{\ggo}$ and $q\in \mathscr{S}_{\beta} $
\end{lemma}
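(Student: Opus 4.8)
The plan is to show that criticality forces the set $\mathfrak{R}(v)$ of related weights of a critical point to lie on a very small affine subspace, and then to feed this into the description of $\mcc$ recalled just before the statement.

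First I would use $\mathscr{S}_{\beta}\neq\phi$ together with the $\GrpK$-equivariance of $\mm_{\ggo}$ to produce a critical point $v$ of $\F_{\ggo}$ with $\mm_{\ggo}(v)=\beta\in\ag$. By Theorem \ref{stratification}, item \ref{teocrit}, $\pi(\beta)v=\lambda v$ for some $\lambda\in\RR$; decomposing $v$ into weight components and using $\pi(\beta)v_{\mu}=\ipda{\beta}{\mu}v_{\mu}$ gives $\ipda{\beta}{\mu}=\lambda$ for every $\mu\in\mathfrak{R}(v)$, and since $\beta=\mcc(\mathfrak{R}(v))$ is a convex combination of those weights (Proposition \ref{propfinalCAP1}, item \ref{criticocap}) we get $\lambda=\|\beta\|^{2}$. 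Hence $\mathfrak{R}(v)$ lies in the hyperplane $P:=\{X\in\ag:\ipda{\beta}{X}=\|\beta\|^{2}\}$, whose point nearest the origin is $\beta$ itself; as $\beta$ lies in $\aff(\mathfrak{R}(v))\subseteq P$, it is the orthogonal projection of the origin onto $\aff(\mathfrak{R}(v))$. On the other hand $\mathfrak{R}(v)\subseteq H:=\{X:\tr X=-d\}$ (all weights of $V$ lie there), and $\tr\beta=\ipda{\beta}{\Id}=-d$ since $\pi(\Id)$ acts on $V$ as multiplication by $-d$ (Euler's identity).

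Next I would split on $\dim\aff(\mathfrak{R}(v))\in\{0,1,2\}$. If it equals $2$, then $P\cap H$ is $2$-dimensional, so $P=H$, hence $\beta$ is parallel to $\Id$ and, having trace $-d$, equals $\beta_{0}$; in this case the nice pair $\{\alpha_{i},\alpha_{j}\}$ already exhibited before the statement has $\beta_{0}$ in the relative interior of $[\alpha_{i},\alpha_{j}]$, so applying Proposition \ref{onedirection} to $w:=\textbf{x}^{\alpha_{i}}+\textbf{x}^{\alpha_{j}}$ yields a critical point $q=\exp(X)w$ in $W:=\spanv_{\RR}\{\textbf{x}^{\alpha_{i}},\textbf{x}^{\alpha_{j}}\}$, with $\mathfrak{R}(q)=\{\alpha_{i},\alpha_{j}\}$ (exponentials do not annihilate weight components), hence $\mm_{\ggo}(q)=\mcc(\{\alpha_{i},\alpha_{j}\})=\beta$. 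Otherwise $\aff(\mathfrak{R}(v))$ is a line (or a point if $\sharp\mathfrak{R}(v)=1$); let $\alpha^{-},\alpha^{+}$ be the extreme weights of $\mathfrak{R}(v)$ on it, so $\CH(\mathfrak{R}(v))=[\alpha^{-},\alpha^{+}]$ and $\beta$, being the projection of the origin onto this line, coincides with $\mcc(\{\alpha^{-},\alpha^{+}\})$. If $\beta\in\Delta(V)$, take $W:=W_{\beta}$ (nice by Corollary \ref{critpesos}) and $q:=\textbf{x}^{\beta}$, a critical point with $\mm_{\ggo}(q)=\beta$. If $\beta\notin\Delta(V)$, then $\alpha^{-}\neq\alpha^{+}$, $\beta$ is interior to $[\alpha^{-},\alpha^{+}]$, and either $\alpha^{+}-\alpha^{-}\notin\Delta(\ggo)$ — in which case $W:=\spanv_{\RR}\{\textbf{x}^{\alpha^{-}},\textbf{x}^{\alpha^{+}}\}$ is nice by the previous Lemma and Proposition \ref{onedirection} again produces the critical $q\in W$ with $\mm_{\ggo}(q)=\beta$ — or $\alpha^{+}-\alpha^{-}=\gamma\in\Delta(\ggo)$.

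This last alternative is the main obstacle, and the plan is to prove it cannot occur (it is precisely the reason the naive reduction of $\beta$ to a pair of weights is not automatic, and why criticality is needed). Since $\gamma=\gamma_{pq}$ is primitive in the weight lattice, $\alpha^{-}$ and $\alpha^{+}$ are consecutive weights of $V$ on their line, so $\mathfrak{R}(v)=\{\alpha^{-},\alpha^{+}\}$ and $v=c_{-}\textbf{x}^{\alpha^{-}}+c_{+}\textbf{x}^{\alpha^{+}}$ with $c_{\pm}\neq 0$. I would then compute $\ipa{\mm_{\ggo}(v)}{E_{pq}}$ directly: $\pi(E_{pq})$ sends $\textbf{x}^{\alpha^{-}}$ to a nonzero multiple of $\textbf{x}^{\alpha^{+}}$ (the coefficient is the $x_{p}$-degree of $\textbf{x}^{\alpha^{-}}$, which is positive because $\textbf{x}^{\alpha^{+}}$ is a genuine monomial) and sends $\textbf{x}^{\alpha^{+}}$ into the weight space $V_{\alpha^{+}+\gamma}$, which is orthogonal to both $V_{\alpha^{-}}$ and $V_{\alpha^{+}}$; hence $\ipa{\mm_{\ggo}(v)}{E_{pq}}\neq 0$, contradicting $\mm_{\ggo}(v)=\beta\in\ag$. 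Thus only the earlier cases arise, each of which supplies a nice $W=\spanv_{\RR}\{\textbf{x}^{\alpha_{i}},\textbf{x}^{\alpha_{j}}\}$ with $\mcc(\{\alpha_{i},\alpha_{j}\})=\beta$ together with a critical point $q\in W$; since $q$ is a nice element, $\mm_{\ggo}(q)=\mcc(\mathfrak{R}(q))=\beta$, so $q\in\mathscr{C}(\beta)\subseteq\mathscr{S}_{\beta}$, which completes the proof.
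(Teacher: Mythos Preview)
Your proof is correct, but it takes a genuinely different route from the paper's.

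The paper argues purely combinatorially: it first invokes the observation preceding the lemma that any $\beta$ with $\mathscr{S}_\beta\neq\phi$ can be written as $\mcc(\{\widetilde{\alpha_i},\widetilde{\alpha_j}\})$ for some pair of weights; if $\widetilde{\alpha_j}-\widetilde{\alpha_i}\in\Delta(\ggo)$ (so the pair is not nice), it replaces them by the ``outermost'' weights $\alpha_i=-(0,n_1+n_2,n_3)$ and $\alpha_j=-(n_1+n_2,0,n_3)$ on the same line, which differ by a non-root unless $n_1+n_2=1$. That exceptional case, $\beta=-(\tfrac12,\tfrac12,d-1)$ up to permutation, is then ruled out by the \emph{next} lemma in the paper (used as a forward reference). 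So the paper never looks at an actual critical point; it only manipulates weights and appeals to the separate fact that the half-integer $\beta$'s give empty strata.

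Your argument instead exploits the structure of a specific critical point $v$ with $\mm_\ggo(v)=\beta$. You show that $\mathfrak{R}(v)$ lies on the affine hyperplane $\{\ipda{\beta}{\cdot}=\|\beta\|^2\}\cap H$, reduce by dimension to $\beta=\beta_0$ or to a 1-dimensional $\aff(\mathfrak{R}(v))$, and in the latter case take the extreme weights $\alpha^\pm$ of $\mathfrak{R}(v)$ itself. The key difference is how you dispose of the bad case $\alpha^+-\alpha^-=\gamma_{pq}$: you observe that then $\mathfrak{R}(v)=\{\alpha^-,\alpha^+\}$ (consecutive lattice points) and compute directly that $\ipda{\mm_\ggo(v)}{E_{pq}}\neq 0$, contradicting $\mm_\ggo(v)\in\ag$. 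This is self-contained and does not require the subsequent lemma about $\beta=-(\tfrac12,\tfrac12,d-1)$; it also explains more transparently why criticality is essential. The paper's approach, on the other hand, is shorter and more explicit about which replacement pair to take, and it does not need the case analysis on $\dim\aff(\mathfrak{R}(v))$.
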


\begin{proof}
  According to the above remark, there exist $\widetilde{\alpha_i}$ and $\widetilde{\alpha_j}$ in $\Delta(V)$ such that $\beta = \mcc(\{\widetilde{\alpha_i}, \widetilde{\alpha_j}\})$. If $\widetilde{\alpha_i}$ and $\widetilde{\alpha_j}$ satisfy the conclusion of the lemma, then we are done. If they do not, then from the previous lemma, $\widetilde{\alpha_j} - \widetilde{\alpha_i} \in \Delta(\ggo)$, say $\widetilde{\alpha_j} - \widetilde{\alpha_i} = (1,-1,0)$; this involves no loss of generality. As in the previous lemma, $\widetilde{\alpha_i}=-(1+n_1,n_2-1,n_3)$ and $\widetilde{\alpha_j}=-(n_1,n_2,n_3)$. It is easy to check that
 $\alpha_i=-(0,n_1+n_2,n_3)$ and $\alpha_j=-(n_1+n_2,0,n_3)$ are in $\aff(\{\widetilde{\alpha_i}, \widetilde{\alpha_j}\})$, and $\beta=\mcc(\{\alpha_i,\alpha_j\})$. In this case, the pair $\alpha_i$ and $\alpha_j$ proves the existence part, because we must have $n_1+n_2 > 1$; on the contrary, $\beta=-(\frac{1}{2},\frac{1}{2},n_3)$ and $\mathscr{S}_{\beta} = \phi$.
\end{proof}

\begin{lemma}
   If $\beta \in \{-(\frac{1}{2},\frac{1}{2},d-1), -(\frac{1}{2},d-1,\frac{1}{2}), -(d-1,\frac{1}{2},\frac{1}{2})\}$, then $\mathscr{S}_{\beta}$ is not a stratum.
\end{lemma}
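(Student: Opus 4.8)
The plan is to argue by contradiction, using Lemma~\ref{ternlemma} as the key input, and then extract a numerical impossibility from the weight coordinates. First I would assume $\mathscr{S}_{\beta}\neq\phi$ for one of the three listed points; since permuting the three variables preserves $\Delta(\ggo)$, $\Delta(V)$, the niceness condition of Theorem~\ref{nicebasis}, and the operation $\mcc$, it is enough to handle $\beta=-(\tfrac12,\tfrac12,d-1)$. By Lemma~\ref{ternlemma} there are weights $\alpha_i,\alpha_j\in\Delta(V)$ with $W=\spanv_{\RR}\{\mathbf{x}^{\alpha_i},\mathbf{x}^{\alpha_j}\}$ a nice space and $\beta=\mcc(\{\alpha_i,\alpha_j\})$. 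Every weight is an integer vector but $\beta$ is not, so $\beta\notin\Delta(V)$; hence $\alpha_i\neq\alpha_j$ and $\beta$ is not an endpoint of the segment $[\alpha_i,\alpha_j]$. Because $\beta$, being $\mcc(\{\alpha_i,\alpha_j\})$, is the point of that segment at minimal distance from the origin---equivalently from the barycenter $\beta_{0}$, since all weights lie in the affine plane $x+y+z=-d$ of which $\beta_{0}$ is the foot of the perpendicular from $0$---and since $\beta$ is interior to the segment, we obtain $\beta-\beta_{0}\perp\alpha_j-\alpha_i$.

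Next I would pin down the direction $\alpha_j-\alpha_i$. A direct computation gives $\beta-\beta_{0}=\tfrac{2d-3}{6}(1,1,-2)$, a nonzero multiple of $(1,1,-2)$. Hence $\alpha_j-\alpha_i$ is a nonzero integer vector lying in the plane $\{x+y+z=0\}$ and orthogonal to $(1,1,-2)$; that intersection is precisely the line $\RR(1,-1,0)$, so $\alpha_j-\alpha_i=a(1,-1,0)$ for some $a\in\ZZ\smallsetminus\{0\}$, and after swapping $\alpha_i$ and $\alpha_j$ if necessary we may take $a\geq 1$. Since $a(1,-1,0)=a\gamma_{12}$ belongs to $\Delta(\ggo)$ exactly when $a=1$, the characterization of nice two-monomial spans proved just before Lemma~\ref{ternlemma} forces $a\geq 2$.

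Finally, because $\alpha_i,\alpha_j\in\Delta(V)$ differ by $a(1,-1,0)$ I can write $\alpha_i=-(p+a,q,e)$ and $\alpha_j=-(p,q+a,e)$ with $p,q,e$ nonnegative integers and $(p+a)+q+e=d$. Both weights have third coordinate $-e$, hence so does every point of $[\alpha_i,\alpha_j]$; comparing with $\beta=-(\tfrac12,\tfrac12,d-1)$ gives $e=d-1$ and therefore $p+q+a=1$, which is impossible since $p,q\geq 0$ and $a\geq 2$. This contradiction shows $\mathscr{S}_{\beta}=\phi$, as desired. The one step that takes a little care is the passage from $\beta=\mcc(\{\alpha_i,\alpha_j\})$ to the orthogonality relation $\beta-\beta_{0}\perp\alpha_j-\alpha_i$, and then combining it with niceness to force $a\geq 2$; once the direction $(1,-1,0)$ and the bound $a\geq 2$ are in hand, the rest is routine bookkeeping with weight coordinates.
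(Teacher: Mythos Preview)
Your argument is clean and the convex-geometry computation is correct, but there is a genuine circularity problem with the paper's logical structure. You invoke Lemma~\ref{ternlemma} to obtain a \emph{nice} two-monomial span with $\beta=\mcc(\{\alpha_i,\alpha_j\})$, and then use niceness to force $a\ge 2$ and reach the contradiction $p+q+a=1$. However, look at the last sentence of the proof of Lemma~\ref{ternlemma}: the case $n_1+n_2\le 1$ is disposed of there precisely by asserting that then $\beta=-(\tfrac12,\tfrac12,n_3)$ and $\mathscr{S}_\beta=\phi$ --- which is exactly the present lemma. So Lemma~\ref{ternlemma}, as proved in the paper, already presupposes this result; citing it here closes a logical circle. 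What your computation actually establishes (and correctly so) is that for $\beta=-(\tfrac12,\tfrac12,d-1)$ there is no \emph{nice} pair $\alpha_i,\alpha_j\in\Delta(V)$ with $\beta=\mcc(\{\alpha_i,\alpha_j\})$; but lifting this to $\mathscr{S}_\beta=\phi$ via Lemma~\ref{ternlemma} is not available to you.

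The paper's own proof avoids Lemma~\ref{ternlemma} entirely and argues directly from the critical-point condition. If $q$ were a critical point with $\mm_{\ggo}(q)=\beta$, then $\pi(\beta)q=\|\beta\|^2 q$ by Theorem~\ref{stratification}(\ref{teocrit}), so every weight $\alpha\in\mathfrak{R}(q)$ satisfies $\ipda{\alpha}{\beta}=\|\beta\|^2$. Intersecting this affine hyperplane with the weight triangle gives the same line you found (direction $(1,-1,0)$, third coordinate $-(d-1)$), on which the only weights are $-(1,0,d-1)$ and $-(0,1,d-1)$. Hence $q=axz^{d-1}+byz^{d-1}$, which lies in the $\GrpG$-orbit of a single monomial $cxz^{d-1}$ and therefore in $\mathscr{S}_{\beta'}$ with $\beta'=-(1,0,d-1)\ne\beta$, a contradiction. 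Your orthogonality and direction analysis is essentially this same intersection step; to make your proof stand on its own, replace the appeal to Lemma~\ref{ternlemma} by the eigenvector constraint $\pi(\beta)q=\|\beta\|^2 q$ (which pins down $\mathfrak{R}(q)$ directly, with no niceness needed) and finish with the orbit argument.
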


\begin{proof}
  It suffices for our purposes to prove that $\mathscr{S}_{\beta}$ has not critical points of $\F_{\ggo}$. From Theorem \ref{stratification} (\ref{teocrit}), we can prove  that there exists no $q \in V$ such that $\mm_{\ggo}(q)=\beta$ and $\pi(\beta)q = ||\beta||^2 q$. If $q = \sum_{\alpha \in \Delta(V)} a_{\alpha} \textbf{x}^{\alpha}$, $\pi(\beta)q=||\beta||^2 q$ if and only if $q \in \RR \textbf{x}^{\alpha_{i_{1}}} \, \ortsum \, \ldots\, \ortsum \, \RR \textbf{x}^{\alpha_{i_{s}}}$ with $\ipda{{\alpha_{i_{j}}}}{\beta}=||\beta||^2$; i.e. ${\alpha_{i_{j}}}'$s are in the affine space
\begin{equation*}
    \Omega = \{X \in \ag : \ipda{X}{\beta} = ||\beta||^2\}.
\end{equation*}

$\Omega$ meets the equilateral triangle (that contains all weights) at 1-dimensional affine space, which has only two weights of $\Delta(V)$; namely $\alpha_i=(1,0,d-1)$ and $\alpha_j=(0,1,d-1)$. Thus, one such $q$ is of the form $axz^{d-1}+byz^{d-1}$. It is easily seen that $q$ is in the orbit of $p=cxz^{d-1}$ and since $p \in \mathscr{S}_{\beta^{'}}$ with $\beta^{'}=-(1,0,d-1)$, it follows that $\mathscr{S}_{\beta}=\phi$ by definition of stratum.
\end{proof}

By Corollary \ref{ultimocor} and Lemma \ref{ternlemma} we deduce the main result of this section.

\begin{theorem}\label{straternary}
Let $\mathscr{B}$ be as in the Theorem \ref{stratification} for the natural action of $\mathrm{GL}_{3}(\RR)$ on $\RR[x,y,z]_{d}$. Then
\begin{equation}
\mathscr{B} = \{\beta=\mcc(\{\alpha_i,\alpha_j\})  : \alpha_i, \, \alpha_j \in \Delta(V) \mbox{ with } \alpha_j - \alpha_i \nin \Delta(\ggo)\} \cap \overline{\ag^{+}}
\end{equation}
\end{theorem}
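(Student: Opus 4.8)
The plan is to establish the two inclusions between $\mathscr{B}$ and the set on the right-hand side, which I will call $R$. Recall that by definition $\mathscr{B}=\{\beta\in\overline{\ag^{+}}:\mathscr{S}_{\beta}\neq\emptyset\}$, and that $R$ is, by construction, intersected with $\overline{\ag^{+}}$ as well, so it suffices to compare the two as subsets of $\overline{\ag^{+}}$; in particular no Weyl-chamber normalization is needed along the way.

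First I would prove $R\subseteq\mathscr{B}$. Take $\beta=\mcc(\{\alpha_i,\alpha_j\})\in\overline{\ag^{+}}$ with $\alpha_i,\alpha_j\in\Delta(V)$ and $\alpha_j-\alpha_i\notin\Delta(\ggo)$. By the lemma immediately preceding Lemma \ref{ternlemma} (the one characterizing when $\spanv_{\RR}\{\textbf{x}^{\alpha_i},\textbf{x}^{\alpha_j}\}$ is nice), the subspace $W=\spanv_{\RR}\{\textbf{x}^{\alpha_i},\textbf{x}^{\alpha_j}\}$ is a nice space. Corollary \ref{ultimocor} then yields $\mathscr{S}_{\beta}\neq\emptyset$, so $\beta\in\mathscr{B}$. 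The degenerate case $\alpha_i=\alpha_j$, where $W$ is one-dimensional and $\beta=\alpha_i$, is covered by Corollary \ref{critpesos}, which exhibits $\textbf{x}^{\alpha_i}$ itself as a critical point of $\F_{\ggo}$ with moment-map value $\alpha_i$.

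Next I would prove $\mathscr{B}\subseteq R$. Let $\beta\in\mathscr{B}$, so $\mathscr{S}_{\beta}\neq\emptyset$. Lemma \ref{ternlemma} furnishes $\alpha_i,\alpha_j\in\Delta(V)$ for which $W=\spanv_{\RR}\{\textbf{x}^{\alpha_i},\textbf{x}^{\alpha_j}\}$ is nice and $\beta=\mcc(\{\alpha_i,\alpha_j\})$; applying the ``only if'' direction of the niceness characterization lemma to this $W$ forces $\alpha_j-\alpha_i\notin\Delta(\ggo)$. Since $\beta\in\overline{\ag^{+}}$, we get $\beta\in R$, completing the proof.

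I do not expect a genuine obstacle at this level: the two substantive facts — that every nonempty stratum of the $\mathrm{GL}_3(\RR)$-action on $\RR[x,y,z]_d$ is detected by a two-weight nice space (Lemma \ref{ternlemma}, whose proof carries the real content: the equilateral-triangle geometry of $\Delta(V)$, the reduction $\mcc(\Phi)=\mcc(\{\alpha_i,\alpha_j\})$, and the exposed-point/affine-line argument), and that such a nice pair produces a nonempty stratum (Corollary \ref{ultimocor}, ultimately via Proposition \ref{onedirection}) — are already in hand. The only mild points of care are the $\overline{\ag^{+}}$ bookkeeping noted above and the trivial one-weight case handled by Corollary \ref{critpesos}; both are routine.
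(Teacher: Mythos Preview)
Your proposal is correct and follows essentially the same approach as the paper, which simply cites Corollary \ref{ultimocor} (for $R\subseteq\mathscr{B}$) and Lemma \ref{ternlemma} (for $\mathscr{B}\subseteq R$). The only addition is that you make explicit the use of the unnamed lemma preceding Lemma \ref{ternlemma} to pass between ``$W$ is nice'' and ``$\alpha_j-\alpha_i\notin\Delta(\ggo)$'', and you separately note the degenerate one-weight case---both of which are already absorbed into the proofs of those two results.
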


Although the Theorem \ref{straternary} makes very easy to find the set $\mathscr{B}$, the problem of finding the critical points of $\F_{\ggo}$ in each stratum is a very difficult task. For instance, the critical points of $\F_{\ggo}$ in $\mathscr{S}_{\beta_{0}}$ (recall that $\beta_{0} = -(\frac{d}{3},\frac{d}{3},\frac{d}{3})$) bring us to the classification of closed $\mathrm{SL}_3(\RR)$-orbits in $V$ (which is a very well-known open problem). Now, we explore such problem  in ternary quartics ($n=3$ and $d=4$, Figure \ref{triangle}).

\begin{figure}
 \includegraphics[width=9cm]{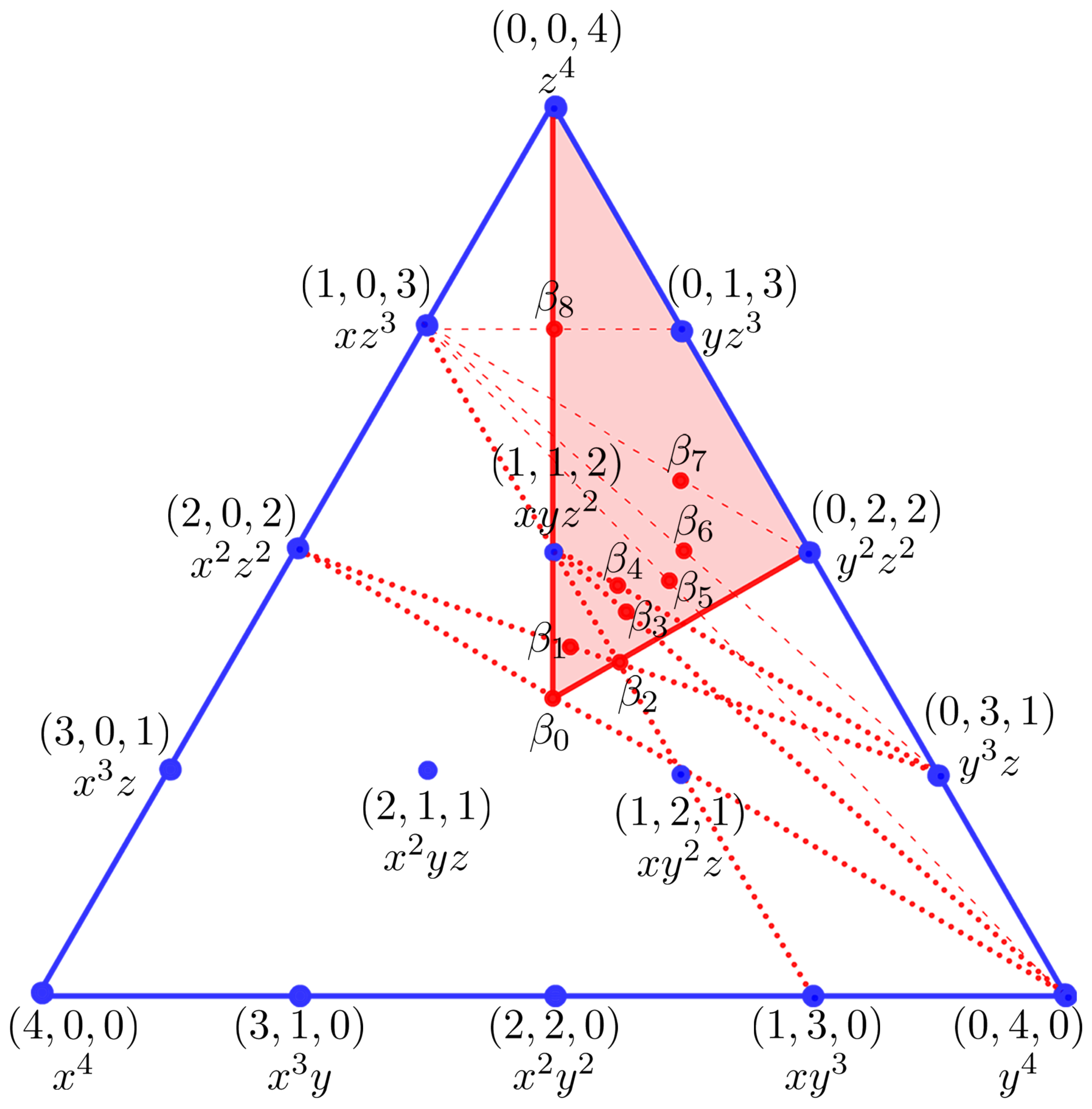}\\
  \caption{Equilateral triangle, $n=3$ and $d=4$}\label{triangle}
\end{figure}

By using the Theorem \ref{straternary}, we can find the set $\mathscr{B}$ in the following straightforward way. For each $\beta \in \mathscr{B}$, we calculate the vector subspace of $V$, $Z_{\beta}$ given by
\begin{equation*}
Z_{\beta}:= \left\{ p = \sum_{\alpha \in \Delta(V)} a_{\alpha} \textbf{x}^{\alpha} : \ipda{\alpha}{\beta}=||\beta||^2 \mbox{ with } a_{\alpha}\neq 0\right\}.
\end{equation*}
This subspace is associated to the affine space
\begin{equation*}
  \Omega(\beta) = \{ X \in \ag : \ipda{X}{\beta}=||\beta||^2\}
\end{equation*}
and we must find the weights in the intersection of $\Omega(\beta)$ with the equilateral triangle, which is a $k$-dimensional convex set with $0\leq k \leq 2$. Finally, we find
\begin{equation}\label{conjptocri}
  \{ p \in Z_{\beta} : \mm_{\ggo}(p)=\beta \mbox{ with }||p||=1\},
\end{equation}
which gives the set $\mathscr{C}(\beta)$ (as in Theorem \ref{stratification} (\ref{defestratos})).

The case $\beta_0$, as we said above, the critical points in $\mathscr{S}_{\beta_{0}}$ correspond to $\mathrm{SL}_3(\RR)$-closed orbits and are also called minimal (they are global minima of $\F_{\ggo}$).  In Table \ref{table}, we give the strata of ternary quartic forms. The third column in the table shows the form that a critical point should take according the $\beta$ given in the type column. For $\beta_2 = \left(1,\frac{3}{2},\frac{3}{2}\right)$, a critical point must be of the form $p=x(az^3+byz^2+cy^2z+dy^3)$. In this case, it is easy to see that $p$ is a critical point if and only if the $\mathrm{SL}_{2}(\RR)$-orbit of the binary form $\frac{p}{x}=az^3+byz^2+cy^2z+dy^3$ is closed (for instance, by comparing $\mm_{\mathfrak{sl}_{3}(\RR)}(p)$ with $\beta_2$). The same holds for critical points of type $(1,1,2)$ and $(0,2,2)$. In the first case, a critical point must be of the form $q=z^2(ax^2+bxy+cy^2)$ and so the binary form $\frac{q}{z^2}=ax^2+bxy+cy^2$ must has a $\mathrm{SL}_{2}(\RR)$-closed orbit. In the last case, we have a binary quartic form, as in the mentioned cases above, must has a $\mathrm{SL}_{2}(\RR)$-closed orbit. To study such cases, we use the well-known canonical forms for binary forms of low degree  (see, \cite[Chapter V: 22.4, 23.2, \S25-Exercises 13 and 14]{GUREVICH1} or \cite[Pag. 9, Pag. 28, Pag.30-Exercise 2.25]{OLVER1}); by using such classification, it is fairly easy to see those with $\mathrm{SL}_{2}(\RR)$-closed orbit. For the convenience of the reader, we recall in the appendix (Tables \ref{table4t}, \ref{table3n} and \ref{table2d}), the canonical forms for the real or complex binary forms of degree 4,3 y 2 respectively.

For the remaining cases, however, it is very simple to find all distinguished orbits by explicit calculation of the set in Equation (\ref{conjptocri}).

\begin{theorem}\label{3-4}
 The classification of distinguished orbits in the null cone of $\RR[x,y,z]_4$ for the natural action of $\mathrm{SL}_{3}(\RR)$ is given in Table \ref{table}.
\end{theorem}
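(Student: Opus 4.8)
The plan is to run, in order, the computational procedure sketched just before the statement: Theorem \ref{straternary} supplies the combinatorics of the stratifying set, and Theorem \ref{stratification}(\ref{teocrit}) converts ``critical point with a prescribed moment value'' into a linear-algebra problem. The first step is to pin down $\mathscr{B}$ for $V=\RR[x,y,z]_4$. Here $\Delta(V)$ consists of the $\binom{6}{2}=15$ weights $-(d_1,d_2,d_3)$ with $d_i\ge 0$ and $d_1+d_2+d_3=4$ filling the equilateral triangle of Figure \ref{triangle}, and by Theorem \ref{straternary}, $\mathscr{B}=\{\mcc(\{\alpha_i,\alpha_j\}):\alpha_i,\alpha_j\in\Delta(V),\ \alpha_j-\alpha_i\notin\Delta(\ggo)\}\cap\overline{\ag^{+}}$. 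Since (by the barycentric remark preceding the statement) the $\mcc$ of any finite subset of the triangle is either $\beta_0=-(\tfrac{4}{3},\tfrac{4}{3},\tfrac{4}{3})$ or the $\mcc$ of a pair of weights, running over the finitely many admissible pairs is a finite combinatorial task producing the list in the ``type'' column of Table \ref{table}. From it one removes $\beta_0$, which projects to $0$ in $\slg_3(\RR)$, so that $\mathscr{S}_{\beta_0}$ is the semistable locus and lies outside the null cone --- its description being the open problem of closed $\Sl_3(\RR)$-orbits; the weights of shape $-(\tfrac{1}{2},\tfrac{1}{2},3)$ and their permutations do not appear, in accordance with the lemma preceding Theorem \ref{straternary} showing they carry empty strata, while Corollary \ref{ultimocor} together with Lemma \ref{ternlemma} guarantees that every surviving $\beta$ labels a nonempty stratum containing at least one distinguished orbit.

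For each surviving $\beta$ I would then describe all distinguished orbits in $\mathscr{S}_\beta$, equivalently the set $\mathscr{C}(\beta)$ up to $\GrpK$ and scaling. If $q$ is critical with $\mm_{\ggo}(q)\in\Ad(\GrpK)\beta$ then, after a $\GrpK$-move, $\mm_{\ggo}(q)=\beta$, and Theorem \ref{stratification}(\ref{teocrit}) forces $\pi(\beta)q=||\beta||^2 q$; hence $q$ lies in the span $Z_\beta$ of the monomials $\textbf{x}^\alpha$ with $\ipda{\alpha}{\beta}=||\beta||^2$, obtained by intersecting the affine slice $\Omega(\beta)$ with the triangle, and one must solve $\mm_{\ggo}(q)=\beta$ for $q\in Z_\beta$, $||q||=1$. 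Two regimes occur. For three of the $\beta$'s (up to a permutation of $x,y,z$), namely those for which $Z_\beta$ is a full space $z^{k}\cdot\RR[x,y]_{4-k}$ of binary forms of degree $4-k\in\{2,3,4\}$ in two of the variables, comparing $\mm_{\slg_3(\RR)}(q)$ with $\beta$ shows that $q\in\mathscr{C}(\beta)$ if and only if the binary form $q/z^{k}$ has a closed $\Sl_2(\RR)$-orbit, and this is read off the lists of real and complex canonical forms for binary forms of degrees $2$, $3$ and $4$ in Tables \ref{table2d}, \ref{table3n} and \ref{table4t}. For every remaining $\beta$ the space $Z_\beta$ is small --- one- or two-dimensional, in the latter case spanned by a \emph{nice} pair of monomials --- so that Corollary \ref{critpesos}, Proposition \ref{onedirection} and a direct computation of $\mm_{\ggo}$ produce all critical points, hence all orbits, at once.

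The main obstacle is precisely the binary-form regime. Over $\RR$ one must (i) enumerate the closed $\Sl_2(\RR)$-orbits inside each relevant space of binary forms, taking care that a single closed $\Sl_2(\CC)$-orbit may split into several real ones --- already for quadratics, $xy$ and $x^2+y^2$ are distinct closed $\Sl_2(\RR)$-orbits, and likewise for cubics and quartics --- and (ii) for each resulting normal form verify that the associated ternary quartic really lands in the prescribed stratum $\mathscr{S}_\beta$, which by Theorem \ref{stratification}(\ref{defestratos}) reduces to computing $\mm_{\ggo}$ of that normal form and checking it is $\GrpK$-conjugate to $\beta$. Once this is carried out for every $\beta$ on the list, assembling Table \ref{table} is a finite, if lengthy, bookkeeping exercise on the $15$-point weight diagram, organised by Theorem \ref{straternary} and Corollary \ref{ultimocor}.
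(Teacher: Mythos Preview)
Your proposal is correct and follows essentially the same approach as the paper: enumerate $\mathscr{B}$ via Theorem \ref{straternary}, then for each $\beta$ compute $Z_\beta$ and solve $\mm_{\ggo}(q)=\beta$, reducing the three ``binary'' strata $\beta_2,\,\alpha_{xyz^2},\,\alpha_{y^2z^2}$ to closed $\Sl_2(\RR)$-orbits of binary forms of degrees $3$, $2$, $4$ via Tables \ref{table3n}, \ref{table2d}, \ref{table4t}, and handling the remaining one- or two-dimensional $Z_\beta$ by direct computation. The only cosmetic difference is that the paper keeps $\beta_8=-(\tfrac12,\tfrac12,3)$ in Table \ref{table} with the entry \texttt{empty} to record explicitly that the candidate stratum is void, whereas you discard it at the outset via the lemma preceding Theorem \ref{straternary}; this is a matter of presentation, not of argument.
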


\begin{table}[h]
  \centering
\begin{tabular}{|c|c|c|l|}
\hline
\multicolumn{1}{ | c         |  }  {\centering Nt.}          &
\multicolumn{1}{   c|           }  {\centering Type}         &
\multicolumn{2}{   c|           }  {\centering Critical point}\tabularnewline
\hline
\multirow{2}{*}{$\beta_0 $} & \multirow{2}{*}{$(\frac{4}{3},\frac{4}{3},\frac{4}{3})$}   & \multirow{1}{*}{ $\ds \sum_{\alpha \in \Delta(V)} a_{\alpha} \textbf{x}^{\alpha}$ } & \multirow{2}{*}{minimal}\\
 & & & \\
\hline
$\beta_1 $ & $(\frac{8}{7}, \frac{9}{7},\frac{11}{7})$ &  $ax^2z^2+by^3z$  & $a=\pm\sqrt{\frac{1}{7}}$, $b=\sqrt{\frac{1}{14}}$\\
\hline
\multirow{2}{*}{$\beta_2$}  & \multirow{2}{*}{$( 1, \frac{3}{2},\frac{3}{2})$} & \multirow{1}{*}{$x\left(az^3+bz^2y+\right.$ }  & $x[y^3 + y z^2]$ \\
\cline{4-4}
                            &                                                  & \multirow{1}{*}{$\left.czy^2+dy^3\right)$}   & $x[y^3 - y z^2]$ \\
\hline
$\beta_3$  & $(\frac{6}{7}, \frac{10}{7},\frac{12}{7})$ & $axyz^2+by^4$ & $a=\sqrt{\frac{3}{7}}$, $b=\pm \sqrt{\frac{1}{168}}$ \\
\hline
$\beta_4$  & $( \frac{5}{6}, \frac{8}{6},\frac{11}{6})$ & $axyz^2+by^3z$ & $a=\sqrt{\frac{5}{12}}$, $b=\sqrt{\frac{1}{36}}$\\
\hline
\multirow{3}{*}{$\alpha_{xyz^2}$ }    & \multirow{3}{*}{$( 1,1,2 )$}                                & \multirow{2}{*}{$z^2\left(ax^2+bxy+\right.$} &
 $z^2[x^2+y^2]$\\
\cline{4-4}
            &                                          &                                                                                &
 $-z^2[x^2+y^2]$\\
\cline{4-4}
            &                                                                             & \multirow{1}{*}{$\left.cy^2\right)$}                       &
$z^2[x^2-y^2]$\\
\hline
$\beta_5$  & $(\frac{8}{13} ,\frac{20}{13} , \frac{24}{13})$ & $axz^3+by^4$  & $a=\sqrt{\frac{4}{39}}$, $b=\pm \sqrt{\frac{5}{312}}$ \\
\hline
$\beta_6$  & $(\frac{1}{2} ,\frac{3}{2} , 2 )$               & $axz^3+by^3z$ & $a={\sqrt{\frac{1}{12}}}$, $b= {\sqrt{\frac{1}{12}}}$ \\
\hline
$\beta_7$  & $(\frac{1}{3} ,\frac{4}{3} , \frac{7}{3})$      & $axz^3+by^2z^2$ & $a=\sqrt{\frac{1}{18}}$, $b= \pm \sqrt{\frac{1}{6}}$\\
\hline
\multirow{3}{*}{$\alpha_{y^2z^2}$ }           & \multirow{3}{*}{$( 0 , 2 , 2)$}                                   & \multirow{3}{*}{$\sum_{i=0}^{4} a_i y^{4-i}z^{i}$} & $y^4+2ty^2z^2+z^4$ ($t \in \RR$) \\
\cline{4-4}
                             &                                  &
                             &  $-[y^4+2ty^2z^2+z^4]$ ($-1 \leq t$)\\
\cline{4-4}
                             &                                   &
                             &  $y^4+2ty^2z^2-z^4$ ($t \in \RR$) \\
\hline
$\beta_8$  & $ (\frac{1}{2} ,\frac{1}{2} , 3 )$              &  $axz^3+byz^3$                   &  \verb"empty"  \\
\hline
$\alpha_{yz^3}$           & $(0 , 1 , 3) $                                  & $ayz^3$                           & $a= \sqrt{\frac{1}{6}}$ \\
\hline
$\alpha_{z^4}$           & $(0 , 0 , 4 )$                                  & $az^4$                            & $a=\pm \sqrt{\frac{1}{24}}$\\
\hline
\end{tabular}
\caption{Classification of distinguished orbits in the null cone of $\RR[x,y,z]_4$ for the natural action of $\mathrm{GL}_{3}(\RR)$.}\label{table}
\end{table}

\begin{remark}
Although we did not classify closed $\mathrm{SL}_{3}(\RR)$-orbits, our approach provides different families of closed $\mathrm{SL}_{3}(\RR)$-orbits. For instance, we consider a maximal family $\Phi$ of weights which are not \textit{neighbors}, (by neighbors we mean those pairs of weights that have a rest in $\Delta(\mathfrak{gl}_n(\RR)$)) and such that $\beta_{0}\in\CH(\Phi)$. From Corollary \ref{niceresta} the generated space by $\Phi$ is nice and a polynomial $p$ such that $\mathfrak{R}(p)=\Phi$ must have closed $\mathrm{SL}_{3}(\RR)$-orbit (see Proposition \ref{onedirection}). One can prove that the family $p_{a,\ldots,f}(x,y,z)=ax^4+by^4+cz^4+dx^2y^2+ex^2z^2+fy^2z^2$ contains three-parametric families of closed $\mathrm{SL}_{3}(\RR)$-orbits.
\end{remark}

\subsection{Canonical compatible metrics for geometric structures on nilmanifolds}

In \cite{LAURET3}, Jorge Lauret noted that there is an intriguing relationship between the geometry of nilpotent Lie groups and the geometric invariant theory (GIT) applied to actions of reductive subgroups of $\mathrm{GL}_{m}(\RR)$ on $\Lambda^2(\RR^{m})^*\otimes\RR^{m}$. For instance, the recent advances in the study of \textit{Einstein solvmanifolds} and, more generally, \textit{Solvsolitons}, have come from using powerful tools that are given by GIT (see \cite{LAURET6, NIKOLAYEVSKY2}). By using this fact, it has been proposed in \cite{LAURET3} a way to study the problem of finding \comillas{the best metric} which is compatible with a fixed geometric structure on a simply connected nilpotent Lie group. In this approach, it is defined the notion of \textit{minimal compatible metric} and the properties that make a minimal metric \comillas{special} are proved; a minimal metric is unique (up to isometry and scaling) when it exists and it can be characterized as a \textit{soliton solution} of the \textit{invariant Ricci flow} (see \cite[Theorem 4.4]{LAURET3}).  A priori, such properties are far from obvious, however, such facts follow of the last-mentioned connection. We refer the interested reader to \cite{LAURET3} for a complete discussion on this approach.

It is our purpose in this section to study the natural action of $\Glr$ and its reductive subgroups on $V:=\Lambda^2(\RR^{n})^*\otimes\RR^{n}$ given by change of basis:
$$
g\cdot \mu(X,Y)=g \mu(g^{-1} X, g^{-1} Y), \, X,Y \in \RR^n, \, g\in\Glr, \, \mu \in V.
$$
The corresponding representation of $\g$ on $V$ is given by
$$
A\cdot\mu(X,Y)=A\mu(X,Y)-\mu(AX,Y)-\mu(X,AY), \, A\in\g \, \mu\in V,
$$
The inner product $\ipd$ on $\g$ and a Cartan decomposition of $\Glr$ is given in the above application and the inner product $\ip$ on $V$ is the induced inner product by the canonical inner product of $\RR^n$ (denoted also by $\ip$) as follows:
$$
\ipa{\mu}{\lambda}=\sum_{ijk}\ipa{\mu(e_i,e_j)}{e_k}\ipa{\lambda(e_i,e_j)}{e_k}, \, \forall \mu, \lambda \in V.
$$
An easy computation shows that the inner product $\ip$ on $V$ satisfies the required conditions.

Let $(\GrpN_{\mu},\gamma)$ be a \textit{class-$\gamma$ nilpotent Lie group}, i.e. $\GrpN_{\mu}$ is a simply connected nilpotent Lie group with Lie algebra $\ngo = (\RR^n,\mu)$ and $\gamma$ is an invariant geometric structure on $\GrpN$ satisfying \cite[Defintion 2.1]{LAURET3}. The main result of this section is a result of existence of minimal compatible metrics for a very wide family of class-$\gamma$ nilpotent Lie groups (see Theorem \ref{nicegeneralizado}). Let us recall basic definitions and results of \cite{LAURET3} and then to give the mentioned application.

\begin{definition}\cite[Definition 2.2]{LAURET3}
Let $\ipp$ be a \textit{compatible metric} with a class-$\gamma$ nilpotent Lie group $(\GrpN_{\mu},\gamma)$ (as in the \cite[Definition 2.1 (iii)]{LAURET3}). We consider the orthogonal projection $\Ric^{\gamma}_{\ipp}$ of the Ricci operator $\Ric_{\ipp}$ on $\ggo_{\gamma} = \Lie(\mathrm{G}_{\gamma})$ relative to the inner product $\ipd$. $\Ric^{\gamma}_{\ipp}$ is said to be \textit{invariant Ricci operator}, and the corresponding \textit{invariant Ricci tensor} is given by $\ricci^{\gamma}=\ipa{\Ric^{\gamma}\cdot}{\cdot}$.
\end{definition}

\begin{definition}[\textbf{Minimal compatible metric}]\cite[Definition 2.3]{LAURET3}
A left invariant metric $\ipp$ compatible with a class-$\gamma$ nilpotent Lie group $(\GrpN_{\mu},\gamma)$ is called \textit{minimal} if
$$
||\Ric^{\gamma}_{\ipp}||^2=\min\left\{||\Ric^{\gamma}_{\ippd}||^2:\mbox{\begin{tabular}{l} $\ippd$ is a compatible metric with  $(\GrpN_{\mu},\gamma)$ \\
and $\scalar(\ippd)=\scalar(\ipp)$
\end{tabular} } \right\}
$$
\end{definition}

Without loss of generality, from now on we can assume that the canonical inner product of $\RR^n$ (which we denoted also by $\ip$) is compatible with $\gamma$ and that $\mathrm{G}_{\gamma}$ is self-adjoint with respect to this one. Therefore $\mathrm{G}_{\gamma}$ is compatible with the usual Cartan decomposition of $\Glr$: $\mathrm{G}_{\gamma}=\mathrm{K}_{\gamma}\exp(\mathfrak{p}_{\gamma})$  where $\mathrm{K}_{\gamma} \subseteq \mathrm{O}(n)$ is a maximal compact subgroup of $\mathrm{G}_{\gamma}$ and $\mathfrak{p}_{\gamma} \subseteq \sym(n)$ is a vector subspace of symmetric matrices. It follows that $\mm_{\mathfrak{g}_{\gamma}}(v)$ is the orthogonal projection of $\mm_{\ggo_{\gamma}}(v)$ onto $\mathfrak{p}_{\gamma}$ for all $v \in V\smallsetminus\{ 0\}$.

\begin{proposition}\cite[Propositon 4.2]{LAURET3}
Let $(\mathrm{N}_{\mu},\gamma)$ be a class-$\gamma$ nilpotent Lie group. Then
\begin{eqnarray}
\mm_{\g}(\mu)&=&4\Ric_{\mu}, \\
\mm_{\mathfrak{g}_{\gamma}}(\mu) &=& 4 \Ric ^{\gamma}_{\mu},
\end{eqnarray}
where $\Ric_{\mu}$ is the Ricci operator of the Riemannian manifold $(\mathrm{N}_{\mu},\ip)$ and $\Ric^{\gamma}_{\mu}$ is the invariant Ricci operator of $(\mathrm{N}_{\mu},\gamma, \ip)$.
\end{proposition}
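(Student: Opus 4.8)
The plan is to prove both equalities by a direct computation in coordinates: the first by matching the explicit formula for the representation $\pi$ of $\g$ on $V=\Lambda^{2}(\RR^{n})^{*}\otimes\RR^{n}$ against the classical formula for the Ricci operator of a left-invariant metric on a nilpotent Lie group, and the second by a one-line reduction to the first. I would begin by reducing the first identity to a test against symmetric matrices: the image of $\mm_{\g}$ lies in $\pg=\sym(n)$ (as recalled in the Preliminaries) and the Ricci operator $\Ric_{\mu}$ of $(\mathrm{N}_{\mu},\ip)$ is $\ip$-symmetric, so both $\mm_{\g}(\mu)$ and $\Ric_{\mu}$ lie in $\sym(n)$, on which $\ipd$ is nondegenerate. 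Hence, by the defining property~\eqref{momentmap} of the moment map, the first identity is equivalent to the purely algebraic statement
$$
\ipa{\pi(A)\mu}{\mu}=4\,\ipda{\Ric_{\mu}}{A}\qquad\text{for all }A\in\sym(n),
$$
after which dividing by $\|\mu\|^{2}$ and using~\eqref{momentmap} gives $\mm_{\g}(\mu)=\Ri$.

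I would then compute the left-hand side: fix $A\in\sym(n)$, pick an orthonormal basis $\{e_i\}$ of $\RR^{n}$ with $Ae_i=a_ie_i$, and write $\mu_{ij}^{k}=\ipa{\mu(e_i,e_j)}{e_k}$ for the structure constants. From $\pi(A)\mu(X,Y)=A\mu(X,Y)-\mu(AX,Y)-\mu(X,AY)$ one gets $\pi(A)\mu(e_i,e_j)=A\mu(e_i,e_j)-(a_i+a_j)\mu(e_i,e_j)$, and since $A$ is symmetric $\ipa{A\mu(e_i,e_j)}{e_k}=a_k\mu_{ij}^{k}$; hence
$$
\ipa{\pi(A)\mu}{\mu}=\sum_{i,j,k}(a_k-a_i-a_j)\,(\mu_{ij}^{k})^{2}
$$
(the same expression arises as $\tfrac12\ddt|_{0}\|\exp(tA)\cdot\mu\|^{2}$ from $(\exp(tA)\cdot\mu)_{ij}^{k}=\euler^{t(a_k-a_i-a_j)}\mu_{ij}^{k}$, in the spirit of the function $\rho_{\bg}$ used in the proof of Proposition~\ref{mmAorbit}). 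For the right-hand side, $\ipda{\Ric_{\mu}}{A}=\sum_{l}a_{l}\,\ricci_{\mu}(e_l,e_l)$, and the classical formula for the Ricci operator of a left-invariant metric on a nilpotent Lie group (see, e.g., \cite{LAURET6}) gives $\ricci_{\mu}(e_l,e_l)=-\tfrac12\sum_{i,k}(\mu_{li}^{k})^{2}+\tfrac14\sum_{i,j}(\mu_{ij}^{l})^{2}$. Substituting, relabelling indices and using the antisymmetry $\mu_{ij}^{k}=-\mu_{ji}^{k}$ to symmetrize the first sum (so that $\sum_{i,j,k}a_i(\mu_{ij}^{k})^{2}=\tfrac12\sum_{i,j,k}(a_i+a_j)(\mu_{ij}^{k})^{2}$) yields $\ipda{\Ric_{\mu}}{A}=\tfrac14\sum_{i,j,k}(a_k-a_i-a_j)(\mu_{ij}^{k})^{2}$, exactly one quarter of the previous display. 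This proves the algebraic identity, and hence the first equality.

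The second equality is then immediate. After the normalization fixed just before the statement, $\mathrm{G}_{\gamma}$ is a compatible subgroup of $\Glr$, so Proposition~\ref{propfinalCAP1}, item~\ref{proyMM}, gives $\mm_{\ggo_{\gamma}}=\Proy_{\ggo_{\gamma}}\circ\mm_{\g}$, where $\Proy_{\ggo_{\gamma}}$ denotes the $\ipd$-orthogonal projection onto $\ggo_{\gamma}=\Lie(\mathrm{G}_{\gamma})$; applying $\Proy_{\ggo_{\gamma}}$ to the first equality and recalling that $\Ric^{\gamma}_{\mu}$ is, by definition, the $\ipd$-orthogonal projection of $\Ric_{\mu}$ onto $\ggo_{\gamma}$, one obtains $\mm_{\ggo_{\gamma}}(\mu)=\Proy_{\ggo_{\gamma}}\mm_{\g}(\mu)=\tfrac{4}{\|\mu\|^{2}}\Ric^{\gamma}_{\mu}$. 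The computation above is routine; the one point needing genuine care — and the only place where nilpotency of $\mu$, rather than merely $\mu\in V$, is used — is the identification of $\ricci_{\mu}(e_l,e_l)$ with the displayed quadratic expression: Besse's general formula for a left-invariant metric carries two additional terms, one built from the mean-curvature vector $H$ (given by $\ipda{H}{X}=\tr(\ad_{\mu}X)$) and one equal to $-\tfrac12\tr\big((\ad_{\mu}X)^{2}\big)$, and one must check that both vanish here — the first because a nilpotent Lie group is unimodular, the second because $\ad_{\mu}X$ is nilpotent. Getting the constant $4$ right hinges on this and on the antisymmetrization step; beyond that I expect no difficulty.
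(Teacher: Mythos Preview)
The paper does not prove this proposition; it simply quotes it from \cite[Proposition 4.2]{LAURET3} as background, so there is no proof in the paper to compare against. Your argument is the standard direct computation and is correct: testing against $A\in\sym(n)$, diagonalizing $A$ in an orthonormal basis, and matching $\ipa{\pi(A)\mu}{\mu}=\sum_{i,j,k}(a_k-a_i-a_j)(\mu_{ij}^k)^2$ with the nilpotent Ricci formula is exactly how this identity is established, and your use of Proposition~\ref{propfinalCAP1}(\ref{proyMM}) together with the definition of $\Ric^{\gamma}$ for the second line is the intended one-line reduction.

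One point worth flagging explicitly: with the moment map normalized as in~\eqref{momentmap} (division by $\|\mu\|^2$), what you actually prove is $\mm_{\g}(\mu)=\tfrac{4}{\|\mu\|^2}\Ric_{\mu}$ and $\mm_{\ggo_{\gamma}}(\mu)=\tfrac{4}{\|\mu\|^2}\Ric^{\gamma}_{\mu}$, and indeed you write $\Ri$ rather than $4\Ric_{\mu}$. The statement as copied here omits the $\|\mu\|^{-2}$; this is a normalization discrepancy between conventions (Lauret's original uses the unnormalized pairing), not an error in your computation. Your remark that nilpotency enters only through the vanishing of the Killing form and of the mean-curvature term in Besse's formula is exactly right.
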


\begin{theorem}\cite[Proposition 4.3 and 4.4]{LAURET3}\label{minimalmetric}
Let $(\mathrm{N}_{\mu},\gamma)$ be a class-$\gamma$ nilpotent Lie group. $(\mathrm{N}_{\mu},\gamma)$ admits a minimal compatible metric if and only if the $\GrpG_{\gamma}$-orbit of $\mu$ is distinguished for the natural action of $\GrpG_{\gamma}$ on $V$. Moreover, there is at most one minimal compatible metric on $(\mathrm{N},\gamma)$ up to isometry (and scaling)
\end{theorem}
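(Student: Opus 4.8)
The plan is to reduce the statement entirely to the critical point theory of $\F_{\ggo_{\gamma}}$ collected in Theorem \ref{stratification}, so that both directions of the equivalence and the uniqueness come out as formal consequences of item \ref{teocrit} of that theorem. I write $\GrpG_{\gamma}=\GrpK_{\gamma}\exp(\pg_{\gamma})$ for the reductive group attached to $\gamma$ and $\mm_{\ggo_{\gamma}}$ for its moment map on $V$; by the normalizations arranged just above, $\GrpG_{\gamma}$ acting on $V$ fits the hypotheses of Theorem \ref{stratification}. I will use the two identities displayed right before the statement, $\mm_{\ggo_{\gamma}}(\nu)=4\Ric^{\gamma}_{\nu}$, together with the standard scalar-curvature formula $\scalar(\mathrm{N}_{\nu},\ip)=-\tfrac{1}{4}\|\nu\|^2$.

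\emph{Step 1: the dictionary between compatible metrics and orbit points.} Every left-invariant metric on $\mathrm{N}_{\mu}$ is, up to isometry, the canonical inner product $\ip$ on some $\mathrm{N}_{g\cdot\mu}$ with $g\in\Glr$ (perform the change of basis: $g\colon(\RR^n,\mu)\to(\RR^n,g\cdot\mu)$ is a Lie algebra isomorphism and an isometry onto $(\mathrm{N}_{g\cdot\mu},\ip)$). Unwinding the definition of a class-$\gamma$ structure and of a compatible metric from \cite[Definition 2.1]{LAURET3}, together with the standing assumption that $\ip$ is compatible with $\gamma$ and $\mathrm{G}_{\gamma}$ is self-adjoint with respect to it, one checks that such a metric is compatible with $\gamma$ exactly when $g$ can be taken, modulo $\GrpK_{\gamma}$, inside $\GrpG_{\gamma}$; and since $\GrpK_{\gamma}\subseteq\Or(n)$ acts by isometries, two elements of $\GrpG_{\gamma}\cdot\mu$ give isometric (and scaled) compatible metric Lie groups precisely when they lie in the same $\GrpK_{\gamma}$-orbit up to scaling. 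Thus the compatible metrics on $(\mathrm{N}_{\mu},\gamma)$, up to isometry and scaling, are parametrized by the $\GrpK_{\gamma}$-orbits in $\GrpG_{\gamma}\cdot\mu$ up to scaling, and under this bijection the invariant Ricci operator of the metric attached to $\nu\in\GrpG_{\gamma}\cdot\mu$ is $\tfrac{1}{4}\mm_{\ggo_{\gamma}}(\nu)$.

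\emph{Step 2: converting the variational problem and reading off uniqueness.} By the scalar-curvature formula, fixing $\scalar$ among compatible metrics amounts to fixing $\|\nu\|$ along the orbit; on that fixed-norm slice $\|\Ric^{\gamma}_{\nu}\|^2$ is a positive constant multiple of $\F_{\ggo_{\gamma}}(\nu)=\|\mm_{\ggo_{\gamma}}(\nu)\|^2$, and since $\F_{\ggo_{\gamma}}$ is scale-invariant, minimizing $\|\Ric^{\gamma}\|^2$ at fixed scalar curvature is the same as minimizing $\F_{\ggo_{\gamma}}$ over the whole orbit $\GrpG_{\gamma}\cdot\mu$. Hence $(\mathrm{N}_{\mu},\gamma)$ admits a minimal compatible metric if and only if $\F_{\ggo_{\gamma}}|_{\GrpG_{\gamma}\cdot\mu}$ attains its infimum; by Theorem \ref{stratification}, item \ref{teocrit} (applied to $\GrpG_{\gamma}$ and to points $\nu\in\GrpG_{\gamma}\cdot\mu$, using the equivalence of conditions (a) and (c) there), this happens exactly when $\GrpG_{\gamma}\cdot\mu$ contains a critical point of $\F_{\ggo_{\gamma}}$, i.e. exactly when the orbit is $\GrpG_{\gamma}$-distinguished. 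For uniqueness, the last clause of Theorem \ref{stratification}, item \ref{teocrit}, says the critical set of $\F_{\ggo_{\gamma}}$ inside $\GrpG_{\gamma}\cdot\mu$ is a single $\GrpK_{\gamma}$-orbit up to scaling; transporting this back through Step 1, any two minimal compatible metrics with the same scalar curvature differ by an element of $\GrpK_{\gamma}$, hence are isometric, and in general agree up to isometry and scaling.

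The main obstacle is Step 1: everything after it is bookkeeping on top of machinery already assembled, but Step 1 requires proving precisely that compatibility of a left-invariant metric with the class-$\gamma$ structure is equivalent to the associated change of basis lying in $\GrpG_{\gamma}$ modulo $\GrpK_{\gamma}$, and that $\GrpK_{\gamma}$ accounts for exactly the residual isometries. This is where the geometry of \cite[Definition 2.1]{LAURET3} genuinely enters and must be verified from the defining properties of a class-$\gamma$ structure. A secondary, minor point is keeping the scaling consistent — that the scale-invariance of $\F_{\ggo_{\gamma}}$ matches the fixed-scalar-curvature constraint — which is exactly what $\scalar(\mathrm{N}_{\nu},\ip)=-\tfrac{1}{4}\|\nu\|^2$ guarantees.
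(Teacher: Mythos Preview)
The paper does not give its own proof of this theorem: it is stated with a citation to \cite[Propositions 4.3 and 4.4]{LAURET3} and used as a black box. Your outline is correct and is precisely Lauret's argument in \cite{LAURET3}: parametrize compatible metrics (up to isometry and scaling) by $\GrpK_{\gamma}$-orbits in $\GrpG_{\gamma}\cdot\mu$ up to scaling, use $\mm_{\ggo_{\gamma}}=4\Ric^{\gamma}$ and $\scalar=-\tfrac14\|\cdot\|^2$ to identify the minimality condition with minimization of $\F_{\ggo_{\gamma}}$ along the orbit, and then read the equivalence and the uniqueness off the critical-point package in Theorem~\ref{stratification}\,(\ref{teocrit}).

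Two small comments. First, your honest flag on Step~1 is the right place to put it: the identification of compatible metrics with $\GrpG_{\gamma}\cdot\mu$ modulo $\GrpK_{\gamma}$ and scaling is exactly the content of \cite[Proposition~2.5]{LAURET3} (it uses the transitivity of $\GrpG_{\gamma}$ on the set of compatible metrics built into \cite[Definition~2.1\,(iv)]{LAURET3}) and is not reproved in the present paper. Second, the scalar-curvature identity $\scalar(\mathrm{N}_{\nu},\ip)=-\tfrac14\|\nu\|^2$ that you invoke is not stated anywhere in this paper; it is the standard formula for nilpotent Lie groups and is what makes the fixed-$\scalar$ constraint match the scale-invariance of $\F_{\ggo_{\gamma}}$, so it is worth citing explicitly if you want the write-up to be self-contained.
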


We are now in a position to apply previous results to study the existence of minimal metrics on a special family of nilpotent Lie algebras. We denote by $\ag_{\gamma}:=\ag \cap \pg_{\gamma}$ an abelian subalgebra of $\ggo_{\gamma}$ maximal in $\pg_{\gamma}$ and $\mathrm{A}_{\gamma}:=\exp(\ag_{\gamma})$. Since we are considering many group actions and their respective moment maps, it is convenient to introduce the following definition.

\begin{definition}
Let $\mathrm{G}_{\gamma}$ be a reductive subgroup of $\Glr$ compatible with the usual Cartan decomposition of $\Glr$ and let $W$ be an $\mathrm{A}_{\gamma}$-invariant vector subspace of $V:=\lamn$. We call $W$ \textit{$A_{\gamma}$-nice} if $W$ is nice with respect to the action of $\mathrm{G}_{\gamma}$ on $V$.
\end{definition}

\begin{notation}
Let us denote by $\mathfrak{R}_{\gamma}(\mu)$ denote the ordered set of weights related with $\mu$ to the action of $\mathrm{G}_{\gamma}$ on $V$. It is clear that $\mathfrak{R}_{\gamma}(\mu)= \Proj_{\ag_{\gamma}}\mathfrak{R}(\mu)$ where $\mathfrak{R}(\mu)$ are weights related with $\mu$ to the action of $\Glr$ on $V$.
\end{notation}

\begin{theorem}\label{nicegeneralizado}
Let $W$ be a $\mathrm{A}_{\gamma}$-nice space and let $(\mathrm{N}_{\mu},\gamma)$ be a class-$\gamma$ nilpotent Lie group with $\mu \in W$. $(\mathrm{N}_{\mu},\gamma)$ admits a compatible minimal metric if and only if the equation
$$
\gramU^{\gamma}_{\mu}[x_i]=\lambda[1]
$$
has a positive solution $[x_i]$ for some $\lambda \in \RR$. Here, $\gramU^{\gamma}_{\mu}$  is the Gram matrix of $(\mathfrak{R}_{\gamma}(\mu),\ipd)$
\end{theorem}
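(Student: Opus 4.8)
The plan is to obtain Theorem~\ref{nicegeneralizado} by chaining three ingredients that are already available: the translation of the metric problem into geometric invariant theory provided by Theorem~\ref{minimalmetric}, our criterion for distinguishedness of orbits of nice elements (the main theorem, which combines Propositions~\ref{onedirection} and~\ref{onedirection2}), and an elementary Lagrange-multiplier computation converting the convexity condition on the weights of $\mu$ into the Gram-matrix equation appearing in the statement.

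First I would set things up. The subgroup $\mathrm{G}_{\gamma}\subseteq\Glr$ is a real linear reductive algebraic group (since $\gamma$ is cut out by algebraic equations) compatible with the usual Cartan decomposition, and its natural action on $V=\lamn$ is rational; so the main theorem applies to the $\mathrm{G}_{\gamma}$-action on $V$, with $\ggo_{\gamma}=\Lie(\mathrm{G}_{\gamma})$ and $\ag_{\gamma}\subseteq\pg_{\gamma}$ maximal abelian. By hypothesis $W$ is $A_{\gamma}$-nice, i.e.\ $W$ is a nice space for the $\mathrm{G}_{\gamma}$-action, so $\mu\in W$ is a nice element in the sense of Definition~\ref{defnicespace}, and its ordered weight set relative to $\ag_{\gamma}$ is exactly $\mathfrak{R}_{\gamma}(\mu)=\Proj_{\ag_{\gamma}}\mathfrak{R}(\mu)$.

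Next I would run the chain of equivalences. By Theorem~\ref{minimalmetric}, $(\mathrm{N}_{\mu},\gamma)$ admits a minimal compatible metric if and only if the $\mathrm{G}_{\gamma}$-orbit of $\mu$ is distinguished for the $\mathrm{G}_{\gamma}$-action on $V$. Since $\mu$ is a nice element, our main theorem then says this holds if and only if
$$
\mcc(\mathfrak{R}_{\gamma}(\mu)) \in \inte\big(\CH(\mathfrak{R}_{\gamma}(\mu))\big),
$$
the ``if'' part being Proposition~\ref{onedirection} and the ``only if'' part Proposition~\ref{onedirection2}, where the algebraicity of $\mathrm{G}_{\gamma}$ and Jablonski's detectability result Theorem~\ref{detectingth} enter. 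It then remains to identify this convexity condition with the solvability statement. Writing $\mathfrak{R}_{\gamma}(\mu)=\{\alpha_1,\ldots,\alpha_r\}$, the point $\mcc(\mathfrak{R}_{\gamma}(\mu))$ minimizes $\|\sum_i x_i\alpha_i\|^2$ over the simplex $\{x_i\ge 0,\ \sum_i x_i=1\}$, and it lies in $\inte(\CH(\mathfrak{R}_{\gamma}(\mu)))$ exactly when this minimum is attained at a point with all $x_i>0$. At such an interior minimum the gradient $2\sum_i x_i\alpha_i$ is $\ipd$-orthogonal to the direction space of $\aff(\mathfrak{R}_{\gamma}(\mu))$, i.e.\ $\ipda{\alpha_j}{\sum_i x_i\alpha_i}$ is independent of $j$; this is precisely $\gramU^{\gamma}_{\mu}[x_i]=\lambda[1]$, with $\lambda$ the common value. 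Conversely, a positive solution of $\gramU^{\gamma}_{\mu}[x_i]=\lambda[1]$, rescaled so that $\sum_i x_i=1$, produces a point of $\inte(\CH(\mathfrak{R}_{\gamma}(\mu)))$ at which the norm-square gradient is orthogonal to $\aff(\mathfrak{R}_{\gamma}(\mu))$, hence the global minimizer over $\CH(\mathfrak{R}_{\gamma}(\mu))$; this is the argument already behind Corollaries~\ref{coro1} and~\ref{coro2}.

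The step I expect to require the most care is not computational but a matter of hypothesis-checking: confirming that $\mathrm{G}_{\gamma}$ is genuinely a linear reductive \emph{algebraic} group acting rationally on $\lamn$, so that the ``only if'' half of the main theorem (and hence Theorem~\ref{detectingth}) may legitimately be invoked, and keeping straight the bookkeeping that ``$A_{\gamma}$-nice'' means ``nice for the $\mathrm{G}_{\gamma}$-action'' and that it is $\mathfrak{R}_{\gamma}(\mu)=\Proj_{\ag_{\gamma}}\mathfrak{R}(\mu)$, not the full $\Glr$-weight set $\mathfrak{R}(\mu)$, that feeds into $\gramU^{\gamma}_{\mu}$. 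With these points settled, the theorem follows immediately from the cited results.
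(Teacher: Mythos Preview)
Your proposal is correct and matches the paper's own treatment: the paper does not give an explicit proof of Theorem~\ref{nicegeneralizado} but states (in the introduction) that it ``follows immediately from our main result,'' and your chain Theorem~\ref{minimalmetric} $\Rightarrow$ main theorem (Propositions~\ref{onedirection}/\ref{onedirection2}) $\Rightarrow$ Lagrange-multiplier reformulation (Corollaries~\ref{coro1}/\ref{coro2}) is exactly that derivation. Your caution about verifying the algebraic hypotheses on $\mathrm{G}_{\gamma}$ is well placed, since the paper leaves this implicit in the setup inherited from \cite{LAURET3}.
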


\begin{example}
We consider the $6$-dimensional nilpotent Lie algebra $\ngo:=(\mu,\RR^n)$ given by the direct sum of two $3$-dimensional Heisenberg Lie algebra, i.e.
$$
\mu:=\left\{ [e_1,e_4]=e_6, \, [e_2,e_3]=e5 \right.
$$
$\ngo$ admits only two symplectic structures up to symplecto-isomorphism (see, for instance, \cite[Theorem 5. 24]{KHAKIMDJANOV1})
$$
\omega(\pm)= \pm(e_{1}^{*}\wedge e_{6}^{*} + e_{2}^{*}\wedge e_{5}^{*} + e_{3}^{*}\wedge e_{4}^{*})
$$
We will now prove that $(\mathrm{N}_{\mu},\omega(\pm))$ admit a compatible minimal metric.

It is easy to see that the canonical metric of $\RR^6$, $\ip$, defines a compatible metric with $\omega(\pm)$ and $\mathrm{G}_{\omega(\pm)}=\mathrm{Sp}(3,\RR)$. Any compatible metric with $\omega(\pm)$ is in the $\mathrm{Sp}(3,\RR)$-orbit of $\ip$, hence we must minimize the function $||\Ric^{\omega}_{(\cdot,\cdot)}||^2$ over all $\mathrm{Sp}(3,\RR) \cdot \ip$, which is equivalent to minimize $||\mm_{\spg(6,\RR)}||^2$ over the $\mathrm{Sp}(3,\RR)$-orbit of $\mu$. By Theorem \ref{stratification} item \ref{teocrit}, we must show that $\mathrm{Sp}(3,\RR)\cdot \mu$ is $\mathrm{Sp}(3,\RR)$-distinguished.

Let
$$
W=\spanv_{\RR} \{ \mu_{14}^6, \mu_{23}^5\}
$$
where $\mu_{ij}^{k}$ is the bracket defined as $\mu_{ij}^{k}(e_i,e_j)=e_k=-\mu_{ij}^{k}(e_j,e_i)$ and zero in otherwise.
Let us first see that $W$ is a $\mathrm{A}_{\omega}$-nice. A way of making this is to note that $W$ is a nice space and since $\Proj_{\pg_{\omega}} \ag \subseteq \ag_{\omega}$, it follows that $W$ is $\ag_{\omega}$-nice.

Other way of proving the same is using Corollary \ref{niceresta}. The root set $\Delta(\mathfrak{sp}(6,\RR))$ of is given by
$${\small\left\{
\begin{array}{ccc}
\pm\diag(1,0,0,0,0,-1), &
\pm\frac{1}{2} \diag(1,1,0,0,-1,-1), &
\pm\frac{1}{2} \diag(1,-1,0,0,1,-1), \\
\pm\diag(0,1,0,0,-1,0), &
\pm\frac{1}{2} \diag(1,0,1,-1,0,-1), &
\pm\frac{1}{2} \diag(1,0,-1,1,0,-1), \\
\pm\diag(0,0,1,-1,0,0), &
\pm\frac{1}{2} \diag(0,1,1,-1,-1,0), &
\pm\frac{1}{2} \diag(0,1,-1,1,-1,0) \\
\end{array}
\right\}}
$$
The Weights of $W$ with respect to the action of $\mathrm{Sp}(3,\RR)$ are
$$\{ \alpha_1:=\diag(-1,0,\tfrac{1}{2},-\tfrac{1}{2},0,1), \alpha_2:=\diag(0,-1,-\tfrac{1}{2},\tfrac{1}{2},1,0)\}
$$
Since $\alpha_1-\alpha_2 \notin \Delta({\mathfrak{sp}(3,\RR)})$, it follows that $W$ is $\ag_{\omega}$-nice.

In the light of the Theorem \ref{onedirection}, we need to show that $\mcc({\alpha_1,\alpha_2}) \in \inte(\CH(\{\alpha_1,\alpha_2\}))$.
$$
\gramU^{\gamma}_{\mu} = \left(
                              \begin{array}{cc}
                                \frac{5}{2} & -\frac{1}{2} \\
                                -\frac{1}{2} & \frac{5}{2} \\
                              \end{array}
                            \right)
$$
Since $X=(\tfrac{1}{2},\tfrac{1}{2})$ is a positive solution to the problem $\gramU^{\gamma}_{\mu} = [1]_2$, it follows that $(\mathrm{N}_{\mu},\omega(\pm))$ admit a minimal metric.

To find such metric, we solve the problem
\begin{equation}\label{eqex}
\mm_{\mathfrak{sp}(3,\RR)} (\exp(X)\cdot \mu)= \mcc(\{\alpha_1,\alpha_2\})=\diag(-\tfrac{1}{2},-\tfrac{1}{2},0,0,\tfrac{1}{2},\tfrac{1}{2}) \end{equation}
with $X \in \ag_{\omega}$; $X=\diag(a,b,c,-c,-b,-a)$. Let us denote by $\widetilde{\mu}$ to $\exp(X)\cdot\mu$
$$
\widetilde{\mu}=\left\{[e_1,e_4]=\frac{\e^{c}}{\e^{2a}}e_6, \, [e_2,e_3]=\frac{1}{\e^{2b+c}}e_5\right.
$$
We have $\mm_{\mathfrak{sp}(3,\RR)}(\widetilde{\mu})$ is given by
$$
\diag(-2e^{2c-4a},-2e^{-4b-2c},e^{2c-4a}-e^{-4b-2c},e^{-4b-2c}-e^{2c-4a},2e^{-4b-2c},2e^{2c-4a})
$$
Setting $X=(\ln(2),0,\ln(2),-\ln(2),0,-\ln(2))$, we can solve the equation (\ref{eqex}), hence $\widetilde{\mu}=\frac{1}{2}\mu$ is a critical point of $\F_{\mathfrak{sp}(3,\RR)}$ which defines a minimal metric of $(\mathrm{N}_{\mu},\omega(\pm))$.
\end{example}

\begin{remark}
The first infinite families of symplectic Lie algebras appear in dimension $6$. In a forthcoming paper, we develop analogous results to those in \cite{NIKOLAYEVSKY2} for the symplectic case and these are used to classify minimal metrics on $6$-dimensional symplectic Lie algebras.
\end{remark}

Proceeding in an entirely analogous way, we can study the remainder symplectic two-step Lie algebras given in \cite[Theorem 5. 24]{KHAKIMDJANOV1} (up to 18.(b$_{t}$) and 18.(c), which were studied with similar methods used in \cite[Example 1.]{FERNANDEZ-CULMA1}).

\begin{theorem}\label{symtwo}
All symplectic two-step Lie algebras of dimension $6$ admit a minimal compatible metric.
\end{theorem}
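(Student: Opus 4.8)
The proof is a finite case-by-case verification over the classification of $6$-dimensional symplectic two-step nilpotent Lie algebras $(\ngo,\omega)$ given in \cite[Theorem 5.24]{KHAKIMDJANOV1}, carried out exactly as in the Example above. For each pair $(\ngo,\omega)$ one normalizes so that the canonical inner product of $\RR^6$ is a compatible metric and $\omega$ is the standard symplectic form; then $\mathrm{G}_\omega=\mathrm{Sp}(3,\RR)$, $\ggo_\omega=\spg(3,\RR)$, and $\ag_\omega=\{\diag(a,b,c,-c,-b,-a):a,b,c\in\RR\}$ is maximal abelian in $\pg_\omega=\spg(3,\RR)\cap\sym(6)$, with the $C_3$ restricted-root set already listed in the Example. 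Writing $\mu\in V=\lamn$ for the bracket in these adapted coordinates, say $\mu=\sum_k\mu_{i_kj_k}^{l_k}$ over the nonzero structure constants, one sets $W:=\spanv_\RR\{\mu_{i_kj_k}^{l_k}\}$ and reads off $\Delta(W)=\mathfrak{R}_\omega(\mu)\subseteq\ag_\omega$.

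The first task is to check that $W$ is $\mathrm{A}_\omega$-nice. In the great majority of the cases this is immediate from Corollary \ref{niceresta}: one verifies that no difference $\alpha_i-\alpha_j$ of weights of $W$ lies in $\Delta(\spg(3,\RR))$, a routine check against the nine $\pm$-pairs of $C_3$ roots. When such a difference does happen to be a root, one instead invokes Theorem \ref{nicebasis} directly, computing $\pi(Y)W_{\alpha_i}$ for $Y$ in the root space of $\gamma=\alpha_j-\alpha_i$ and checking orthogonality to $W$; should this fail, one replaces the chosen basis of $\ngo$ by one for which the structure constants span a nice space (Proposition \ref{niceelement}).

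The second task applies Theorem \ref{nicegeneralizado} (equivalently Corollary \ref{coro1}): one forms the Gram matrix $\gramU^\gamma_\mu$ of $(\mathfrak{R}_\omega(\mu),\ipd)$ — a small symmetric matrix whose size is at most the number of nonzero brackets — and exhibits a positive solution $[x_i]$ of $\gramU^\gamma_\mu[x_i]=\lambda[1]$ for some $\lambda\in\RR$. By the method of Lagrange multipliers used for Corollary \ref{coro1} (compare also Proposition \ref{mmAorbit}), such a solution exists exactly when $\mcc(\mathfrak{R}_\omega(\mu))\in\inte(\CH(\mathfrak{R}_\omega(\mu)))$, so in each case it is enough to locate the minimal convex combination of a handful of explicit vectors in $\ag_\omega$ and verify that it is relatively interior to their convex hull — an elementary computation. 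Theorem \ref{minimalmetric} then furnishes the minimal compatible metric, and solving $\mm_{\spg(3,\RR)}(\exp(X)\mu)=\mcc(\mathfrak{R}_\omega(\mu))$ with $X\in\ag_\omega$ exhibits it explicitly, just as in the Example.

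The cases that resist this scheme are $18.(\mathrm{b}_t)$ and $18.(\mathrm{c})$ of \cite[Theorem 5.24]{KHAKIMDJANOV1}, where the natural $W$ fails to be nice and no change of basis repairs it; there one argues as in \cite[Example 1]{FERNANDEZ-CULMA1}, producing by hand an explicit critical point of $\F_{\spg(3,\RR)}$ in $\mathrm{Sp}(3,\RR)\cdot\mu$ (the one-parameter family $\mathrm{b}_t$ requiring a computation uniform in $t$). The main obstacle is not conceptual but bookkeeping: ensuring the enumeration of \cite[Theorem 5.24]{KHAKIMDJANOV1} is exhausted, that the $W$ chosen in each item is genuinely nice (or correctly set aside as exceptional), and that positivity of the solutions to the linear systems $\gramU^\gamma_\mu[x_i]=\lambda[1]$ is checked honestly; the content of the theorem is simply that the convexity criterion of Theorem \ref{nicegeneralizado} is met across the whole list.
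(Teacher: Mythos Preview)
Your proposal is correct and follows essentially the same approach as the paper: a case-by-case verification over the classification in \cite[Theorem 5.24]{KHAKIMDJANOV1}, applying Theorem \ref{nicegeneralizado} via the niceness check (Corollary \ref{niceresta} or Theorem \ref{nicebasis}) and the Gram-matrix criterion, with the two exceptional families $18.(\mathrm{b}_t)$ and $18.(\mathrm{c})$ handled by the methods of \cite[Example 1]{FERNANDEZ-CULMA1}. The paper's own argument is precisely this, with the outcome recorded in Table \ref{minmetricas}.
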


\begin{remark}
We must say that we have found several mistakes in the classification given in \cite{KHAKIMDJANOV1}. For instance, 16.(b) does not define a symplectic structure. Some errors have already been corrected by personal communication with authors; as the symplectic structure given in 23.(c).
\end{remark}

In the Table \ref{minmetricas}, each Lie algebra defines a symplectic two-step Lie algebra given by $(\RR^6,\mu,\omega_{cn})$ where
$$
\omega_{cn}:=e_1^{\ast}\wedge e_6^{\ast} + e_2^{\ast} \wedge e_5^{\ast} + e_3^{\ast} \wedge e_4^{\ast},
$$
and is such that the canonical inner product on $\RR^6$ defines a minimal metric for every $(\RR^6,\mu,\omega_{cn})$. In the column $||\beta||^2$ we give the norm squared of the stratum associated to the minimal metric and in Derivation column, we give the derivation of $(\RR^6,\widetilde{\mu})$ such that
$$
\mm_{\mathfrak{sp}_{6}(\RR)}(\widetilde{\mu})=-||\beta||^2\Id + \mbox{Derivation}.
$$
In the last column, We give the dimension of automorphism group of the symplectic two-step Lie algebra $(\RR^6,\widetilde{\mu},\omega_{cn})$.

\begin{table}[h]
\begin{center}
\begin{tabular}{|c|l|c|c|c|}
\hline
\multicolumn{1}{ |c| }{\centering Not. } &
\multicolumn{1}{  c|    }{\centering Critical point } &
\multicolumn{1}{  c|  } {\centering {Derivation}} &
\multicolumn{1}{  m{0.6cm}|  }{\centering $||\beta||^2$ } &
\multicolumn{1}{  m{0.5cm }|  }{\centering $\dim$\\$\Aut$}
\tabularnewline
\hline
\hline
\multirow{2}{*}{16.(a) }&{$[{e_1},{e_2}]=\frac{\sqrt {2}}{4}\,{ e_3},\,[{ e_1},{ e_5}]=\frac{\sqrt {2}}{4}{ e_6},$}&\multirow{2}{*}{$\frac{1}{2}\diag(1, 2, 3, 1, 2, 3)$} & \multirow{2}{*}{$1$} & \multirow{2}{*}{$6$}\\
 &$[{ e_2},{e_4}]=\frac{\sqrt {2}}{4}{ e_6},[{ e_4},{e_5}]=\frac{\sqrt {2}}{4}{e_3}$ & & &  \\
\hline
\multirow{2}{*}{17.} & $[{e_1},{e_3}]=\frac{\sqrt {6}}{6}\,{e_5},\,[{ e_1},{e_4}]=\frac{\sqrt {6}}{6}{e_6},$ &\multirow{2}{*}{$\frac{1}{6}\diag(3, 5, 6, 8, 9, 11)$} &\multirow{2}{*}{$\frac{7}{6}$ }&\multirow{2}{*}{ $7$ }\\
 &$[{e_2},{e_3}]=\frac{\sqrt {6}}{6}{e_6}$ & & & \\
\hline
\multirow{3}{*}{18.(a$_{t}$)}  &$[{e_1},{e_2}]=\frac{1}{2}\,\sqrt {{\frac { \left( t-1 \right) ^{2}}{{t}^{2}-t+1}}}{e_4},$ &\multirow{3}{*}{$\diag(1, 1, 1, 2, 2, 2)$} & \multirow{3}{*}{$\frac{3}{2}$} &\multirow{3}{*}{ $8$}\\
              &$[{e_1},{e_3}]=\frac{t}{2(t-1)}\,\sqrt {{\frac { \left( t-1 \right) ^{2}}{{t}^{2}-t+1}}}{e_5},$&&&\\
              &${[{e_2},{e_3}]}=\frac{1}{2(t-1)}\,\sqrt {{\frac { \left( t-1 \right) ^{2}}{{t}^{2}-t+1}}}{e_6}$&&&\\
\hline
\multirow{3}{*}{18.(b$_{t}$)} &$[{e_1},{e_2}]={\frac {t}{\sqrt {3\,{t}^{2}+1}}}{e_4},$ & \multirow{3}{*}{$\diag(1,1,1,2,2,2)$} &\multirow{3}{*}{ $\frac{3}{2}$} & \multirow{3}{*}{$8$}\\
                              &$[{e_1},{e_3}]={\frac {t}{2\sqrt {3\,{t}^{2}+1}}}{e_5}+{\frac {1}{2\sqrt {3\,{t}^{2}+1}}}{e_6}, $ &&&\\
                              &${[{e_2},{e_3}]}={\frac {{1}}{2\sqrt {3\,{t}^{2}+1}}} {e_5}-{\frac {t}{2\sqrt{3\,{t}^{2}+1}}}{\e_6} $ &&&\\
\hline
\multirow{3}{*}{18.(c)}  &$[{e_1},{e_2}]=-\frac{\sqrt {3}}{12}{e_4}-\frac{\sqrt {3}}{4}{e_5},$ & \multirow{3}{*}{ $\diag(1,1,1,2,2,2)$ }& \multirow{3}{*}{$\frac{3}{2}$} & \multirow{3}{*}{$10$}\\
 &$[{e_1},{e_3}]=\frac{\sqrt {3}}{4}{e_4} + \frac{\sqrt {3}}{12}{e_5},$ & & & \\
 &$[{e_2},{e_3}]=\frac{\sqrt {3}}{6}{e_6}$ & & & \\
\hline
23.(a) &$[{e_1},{e_2}]=\frac{1}{2}\,{e_5},\,[{e_1},{e_3}]=\frac{1}{2}{e_6}$ &$\frac{1}{4}\diag(4, 5, 6, 8, 9, 10)$ &$\frac{7}{4}$ &$9$ \\
\hline
23.(b) &$[{e_1},{e_2}]=-\frac{1}{2}\,{e_4},\,[{e_2},{e_3}]=\frac{1}{2}\,{e_6}$ &$ \diag(1, 1, 1, 2, 2, 2)$ &$\frac{3}{2}$ & $8$\\
\hline
23.(c) & $ [{e_1},{e_2}]=\frac{1}{2}\,{e_5},\,[{e_1},{e_4}]=\frac{1}{2}\,{e_3} $ & $\diag(1, 1, 2, 1, 2, 2)$ & $\frac{3}{2} $& $ 8$ \\
\hline
24.(a) & $ [{e_1},{e_4}]=\frac{1}{2}\,{e_6},\,[{e_2},{e_3}]=\frac{1}{2}\,{e_5} $ & $2\diag(1, 1, 2, 2, 3, 3)$ & $1$ & $ 6$ \\
\hline
24.(b) & $ [{e_3},{e_6}]=-\frac{1}{2}\,{e_1},\,[{e_4},{e_5}]=-\frac{1}{2}\,{e_2} $ & $\frac{1}{2}\diag(3, 3, 2, 2, 1, 1)$ & $1$ & $6$ \\
\hline
25. & $ [{e_1},{e_2}]=\frac{\sqrt {2}}{2}\,{e_6}$ & $ \frac{1}{2}\diag(3, 4, 5, 5, 6, 7)$ & $ \frac{5}{2} $ & $ 12$ \\
\hline
\end{tabular}
\end{center}
\caption{Classification of minimal compatible metrics on symplectic two-step Lie algebras of dimension $6$}\label{minmetricas}
\end{table}

\begin{remark}
The above minimal metrics define \textit{soliton solutions} to the \textit{Street-Tian symplectic curvature flow}, which was recently introduced in \cite{STREETS1}. To be more precise, following Vezzoni \cite{VEZZONI1}, in \cite{POOK1} has been noted that, in two-step nilpotent Lie groups, the \textit{symplectic curvature flow} reduces to the \textit{anti-complexified Ricci flow}, which was given by H\^{o}ng V\^{a}n L\^{e} and  Guofang Wang in \cite{LE1}. Given that the invariant Ricci tensor $\ricci^{\omega}_{\ip}$ coincides with the \textit{anti-complexified Ricci tensor} $\ricci^{ac}_{\ip}$ (see \cite[Equation (23)]{LAURET3}), the affirmation follows from \cite[Proposition 2.7]{LAURET3}.
\end{remark}

\section{Appendix}

\subsection{Convex Functions and Convex Geometry}
In this part, let us recall some notions and basic results of convex functions and convex geometry which were needed for the proof of Propositions \ref{mmAorbit} and \ref{niceelement}.

\begin{definition}\cite[Definitions in V.3]{NEEB1}
Let $V$ be a real vector space and let $\fnn{f}{V}{\RR_{\infty}}$ be a convex function. The set $D_{f}:=f^{-1}(\RR)$ is called the \textit{domain} of $f$.

The function
\begin{equation*}
\fna{f^{*}}{V^{*}}{\RR_{\infty}}{\xi}{\sup_{V}(\xi - f)}
\end{equation*}
is called the conjugate of $f$.
\end{definition}

\begin{theorem}\label{Fenchel}\cite[Corollary V.3.32]{NEEB1}(\textbf{Fenchel's Convexity Theorem})
Let $f\in \mathscr{C}^2(V)$ be such that $\dif^2 f (X)$ is positive definite for all $X \in V$. Then $\dif f$ maps $V$ diffeomorphically onto the open convex set $\inte D_{f^{*}}$ and $\fnn{\dif f^{*}}{ \inte D_{f^{*}} }{ V}$ is the inverse of $\dif f$.
\end{theorem}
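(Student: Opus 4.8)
The plan is to prove the three assertions in the natural order: first that $\dif f$ is a diffeomorphism onto an open set $\Omega := \dif f(V)$, then that $\Omega = \inte D_{f^*}$ (which simultaneously yields openness and prepares the convexity claim), and finally that $\dif f^*$ inverts $\dif f$. I would begin by noting that $f$ is strictly convex: for $X \neq Y$ in $V$ the function $t \mapsto f(X + t(Y-X))$ has second derivative $\dif^2 f(X+t(Y-X))(Y-X,Y-X) > 0$, so its first derivative $t \mapsto \dif f(X+t(Y-X))(Y-X)$ is strictly increasing; evaluating at $t=0$ and $t=1$ shows $\dif f(X) \neq \dif f(Y)$, so $\dif f \colon V \to V^*$ is injective. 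Since the derivative of the map $\dif f$ at a point $X$ is the bilinear form $\dif^2 f(X)$, which is invertible by positive-definiteness, the inverse function theorem makes $\dif f$ a local diffeomorphism; an injective local diffeomorphism between spaces of equal (finite) dimension is a diffeomorphism onto its open image $\Omega$.

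Next I would identify $\Omega$ with the set of $\xi \in V^*$ for which the supremum $f^*(\xi) = \sup_V(\xi - f)$ is attained. The function $x \mapsto \xi(x) - f(x)$ is strictly concave, so it possesses a critical point iff it has a unique global maximum, and its critical points are precisely the $X$ with $\xi = \dif f(X)$. Hence $\xi \in \Omega$ iff the supremum defining $f^*(\xi)$ is attained, in which case $f^*(\xi) = \xi(X) - f(X)$ with $X = (\dif f)^{-1}(\xi)$; in particular $f^*$ is finite on $\Omega$, giving the easy inclusion $\Omega \subseteq D_{f^*}$, and since $\Omega$ is open we get $\Omega \subseteq \inte D_{f^*}$.

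The crux is the reverse inclusion $\inte D_{f^*} \subseteq \Omega$: one must show that whenever $\xi$ lies in the interior of the set where $f^*$ is finite, the supremum is genuinely attained rather than merely approached as $\|x\| \to \infty$. The key is a coercivity argument. Being convex and finite on a neighborhood of $\xi$, the function $f^*$ is bounded, say by $M$, on a small closed ball $\overline{B}(\xi,\epsilon)$. For any $x \in V$ and any $\eta$ with $\|\eta\| \le \epsilon$ the definition of $f^*$ yields $\xi(x) - f(x) \le f^*(\xi+\eta) - \eta(x) \le M - \eta(x)$; choosing $\eta$ in the direction of $x$ makes $-\eta(x)$ tend to $-\infty$, so $x \mapsto \xi(x) - f(x)$ is coercive. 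Coercivity together with continuity forces the supremum to be attained (Weierstrass), whence $\xi = \dif f(X)$ for the maximizer $X$ and $\xi \in \Omega$. This passage from finiteness of $f^*$ on a neighborhood to coercivity of the maximand is where I expect the real work to lie.

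Finally, convexity of $\inte D_{f^*}$ is immediate: $f^*$ is a supremum of the affine functions $\xi \mapsto \xi(x) - f(x)$, hence convex, so its effective domain $D_{f^*}$ and therefore $\inte D_{f^*} = \Omega$ are convex. For the inverse identity I would invoke Legendre duality: for $\xi \in \inte D_{f^*}$ with maximizer $X = (\dif f)^{-1}(\xi)$ one has $f^*(\xi) = \xi(X) - f(X)$, and differentiating this relation in $\xi$ (the contribution coming from the variation of $X$ vanishes because $\dif f(X) = \xi$) gives $\dif f^*(\xi) = X = (\dif f)^{-1}(\xi)$. Thus $\dif f^* \colon \inte D_{f^*} \to V$ is the inverse of $\dif f$, which completes the proof.
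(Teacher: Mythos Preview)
The paper does not prove this theorem: it is stated in the Appendix as a quotation from Neeb's book \cite[Corollary V.3.32]{NEEB1} and used as a black box in the proof of Proposition~\ref{mmAorbit}. There is therefore no ``paper's own proof'' to compare against.

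That said, your argument is the standard and correct route to Fenchel's theorem in finite dimensions: injectivity of $\dif f$ from strict convexity, the inverse function theorem for local diffeomorphism, identification of the image with the set of $\xi$ where the supremum is attained, and the coercivity step to show $\inte D_{f^*}\subseteq\Omega$. The coercivity estimate $\xi(x)-f(x)\le M-\eta(x)$ with $\eta$ chosen so that $\eta(x)=\epsilon\|x\|$ gives precisely $\xi(x)-f(x)\le M-\epsilon\|x\|\to-\infty$, which is what is needed. The Legendre inversion at the end is fine by the envelope argument you sketch.
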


\begin{definition}\cite[Definitions in 2.4]{GRUNBAUM1}
Let $\Omega$ be a convex subset of $\RR^n$. A point $X \in \Omega$ is an \textit{extreme point} of $\Omega$ if it does not belong to the relative interior of any segment contained in $\Omega$, e.d. $X$ is an extreme point if $X= tY + (1-t)Z$ with $0<t<1$ and $Y$ and $Z$ in $\Omega$ implies $X=Y=Z$. Intuitively, an extreme point is a \comillas{corner} of $\Omega$. The set of all extreme points of $\Omega$ is denoted by $\xt(\Omega)$.

A point $X \in \Omega$ is an \textit{exposed point} of $\Omega$ if there exists a supporting hyperplane of $\Omega$ whose intersection with $\Omega$ is $X$ alone (Fig. \ref{exposed}); we mean, there exists a hyperplane of $\RR^n$, namely $\Pi=\{Y \in \RR^n : \ipda{Y}{H}=h \}$ with $H\in \RR^n$, $h \in \RR$ and $\ipd$ is a inner product in $\RR^n$, such that $\ipda{Y}{H}\geq h$ for all $Y \in \Omega$ and the equality holds if and only if $X=Y$. The set of all exposed points of $\Omega$ is denoted by $\xp(\Omega)$.
\end{definition}

It is easily seen that $\xp(\Omega) \subseteq \xt(\Omega)$ for all convex set $\Omega$.

\begin{theorem}\label{expTh}\cite[Theorems 2.4.5 and 2.4.9]{GRUNBAUM1}
\begin{enumerate}
\item (\textbf{Minkowski-Krein-Milman Theorem}) Let $\Omega$ be a compact convex subset of $\RR^n$. Then $\Omega = \CH(\xt(\Omega))$. Moreover, if $\Omega = \CH(\Phi)$ then $\Phi \supseteq \xt(\Omega)$.
\item (\textbf{Straszewicz's Theorem})  If $\Omega$ is a closed convex set then $\xt(\Omega) \subseteq \overline{\xp(\Omega)}$.
\item \label{politope} If $\Omega = \CH(\Phi)$ with $\Phi$ a finite subset of $\RR^n$ ($\Omega$ is a \textit{polytope}) then $\Omega=\CH(\xp(\Omega))$ and $\Phi \supseteq \xp(\Omega)$.
\end{enumerate}
\end{theorem}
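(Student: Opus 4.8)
The plan is to prove the three items in order and to obtain the polytope statement, item~\ref{politope}, as a formal consequence of items~1 and~2, which carry the real content. Throughout, $\Omega\subseteq\RR^{n}$ is convex and $\ipda{\cdot}{\cdot}$ denotes the ambient inner product.

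For the Minkowski--Krein--Milman theorem I would induct on $d:=\dim\aff(\Omega)$. If $d=0$ then $\Omega$ is a point, which is its own extreme point. For the inductive step fix $X\in\Omega$; if $X\in\xt(\Omega)$ there is nothing to prove, so assume $X$ lies in the relative interior of a segment contained in $\Omega$. By compactness this segment extends to a maximal one whose endpoints $Y,Z$ lie on the relative boundary of $\Omega$, and $X\in\CH(\{Y,Z\})$. Each of $Y,Z$ lies on a supporting hyperplane $\Pi$, and the face $F:=\Omega\cap\Pi$ is compact, convex, and satisfies $\dim\aff(F)<d$; by the inductive hypothesis $Y,Z\in\CH(\xt(F))$. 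A routine check shows $\xt(F)\subseteq\xt(\Omega)$ (if $\Pi=\{x:\ipda{x}{H}=h\}$ supports $\Omega$, a splitting $p=tY+(1-t)Z$ of $p\in F$ forces $Y,Z\in F$), whence $X\in\CH(\xt(\Omega))$. For the \emph{moreover} clause, if $\Omega=\CH(\Phi)$ then every $X\in\xt(\Omega)$ is a convex combination of points of $\Phi$; extremality forbids a nontrivial such combination, so $X$ coincides with a member of $\Phi$, i.e.\ $\xt(\Omega)\subseteq\Phi$.

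The heart of the argument is Straszewicz's theorem. I would first record that \emph{every farthest point of a compact $\Omega$ is exposed}: if $q$ maximizes $z\mapsto\|z-w\|$ over $\Omega$, with $R:=\|q-w\|$, then for all $z\in\Omega$
\[
\|z-w\|^{2}=\|z-q\|^{2}+2\,\ipda{z-q}{q-w}+R^{2}\le R^{2},
\]
so $\ipda{z-q}{q-w}\le-\tfrac12\|z-q\|^{2}\le 0$ with equality iff $z=q$; thus the functional $\ipda{\cdot}{q-w}$ attains its maximum over $\Omega$ \emph{only} at $q$, which exposes $q$. Next I would show that an arbitrary extreme point $X$ is a limit of such farthest points. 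Fix $\epsilon>0$. Extremality gives $X\notin C:=\CH(\{z\in\Omega:\|z-X\|\ge\epsilon\})$, a compact convex set, so a separation argument yields a direction $u$ and $\delta>0$ with $\ipda{z-X}{u}\le-\delta$ for every $z\in C$. Taking $w=X-tu$ and using $\|q_{t}-w\|^{2}\ge\|X-w\|^{2}=t^{2}\|u\|^{2}$ for the farthest point $q_{t}$ of $\Omega$ from $w$, a short computation gives
\[
\ipda{q_{t}-X}{u}\;\ge\;-\frac{\|q_{t}-X\|^{2}}{2t}\;\ge\;-\frac{D^{2}}{2t},\qquad D:=\operatorname{diam}\Omega,
\]
so for $t>D^{2}/(2\delta)$ the inequality $\ipda{q_{t}-X}{u}\le-\delta$ must fail, forcing $\|q_{t}-X\|<\epsilon$. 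Hence $X\in\overline{\xp(\Omega)}$. For a general closed (possibly unbounded) $\Omega$ I would truncate by a large ball $\overline{B(X,R)}$ and run the same construction, the delicate point being to keep $q_{t}$ strictly inside the ball so that its exposing functional still exposes it in all of $\Omega$.

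Finally, item~\ref{politope} is immediate: a polytope $\Omega=\CH(\Phi)$ is compact, so by item~1 $\Omega=\CH(\xt(\Omega))$ with $\xt(\Omega)\subseteq\Phi$ finite, and by item~2 $\xt(\Omega)\subseteq\overline{\xp(\Omega)}$. Since $\xp(\Omega)\subseteq\xt(\Omega)$ is then finite, hence closed, we get $\xt(\Omega)\subseteq\overline{\xp(\Omega)}=\xp(\Omega)\subseteq\xt(\Omega)$, so $\xp(\Omega)=\xt(\Omega)$; therefore $\Omega=\CH(\xp(\Omega))$ and $\Phi\supseteq\xt(\Omega)=\xp(\Omega)$. \textbf{The main obstacle} is item~2: both the farthest-point and separation estimate approximating extreme points by exposed ones, and, for unbounded $\Omega$, the truncation needed to reduce to the compact case.
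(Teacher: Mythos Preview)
The paper does not prove this theorem: it is quoted in the appendix with a citation to Gr\"unbaum, \emph{Convex Polytopes}, Theorems~2.4.5 and~2.4.9, and used as a black box (only item~\ref{politope} is invoked, in the proof of Proposition~\ref{mmAorbit} and Proposition~\ref{niceelement}). So there is no in-paper argument to compare against.

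Your argument is nonetheless essentially correct and follows the classical route. The induction for Minkowski--Krein--Milman is standard and fine; your verification that $\xt(F)\subseteq\xt(\Omega)$ for a face $F$ is the right observation. For Straszewicz in the compact case, the farthest-point trick and the separation estimate you give are a clean proof: the key inequality $\ipda{q_t-X}{u}\ge -\|q_t-X\|^2/(2t)$ is exactly what forces $q_t$ out of $C$ for large $t$. The deduction of item~\ref{politope} from items~1 and~2 is correct and is all the paper actually needs.

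The only soft spot is the unbounded case of Straszewicz, which you flag yourself. Your truncation idea is right in spirit, but as written it is a sketch rather than a proof: one must argue that for suitable $t$ the farthest point $q_t$ of $\Omega\cap\overline{B(X,R)}$ lies in the open ball $B(X,R)$ (so that the exposing hyperplane for $q_t$ in the truncation is also exposing in $\Omega$), and this requires controlling both $R$ and $t$ simultaneously. Since the paper only applies the theorem to polytopes, this gap is irrelevant for the present purposes, but if you want a self-contained statement matching item~2 as written you should fill in that estimate.
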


\begin{figure}
  \includegraphics[width=7cm]{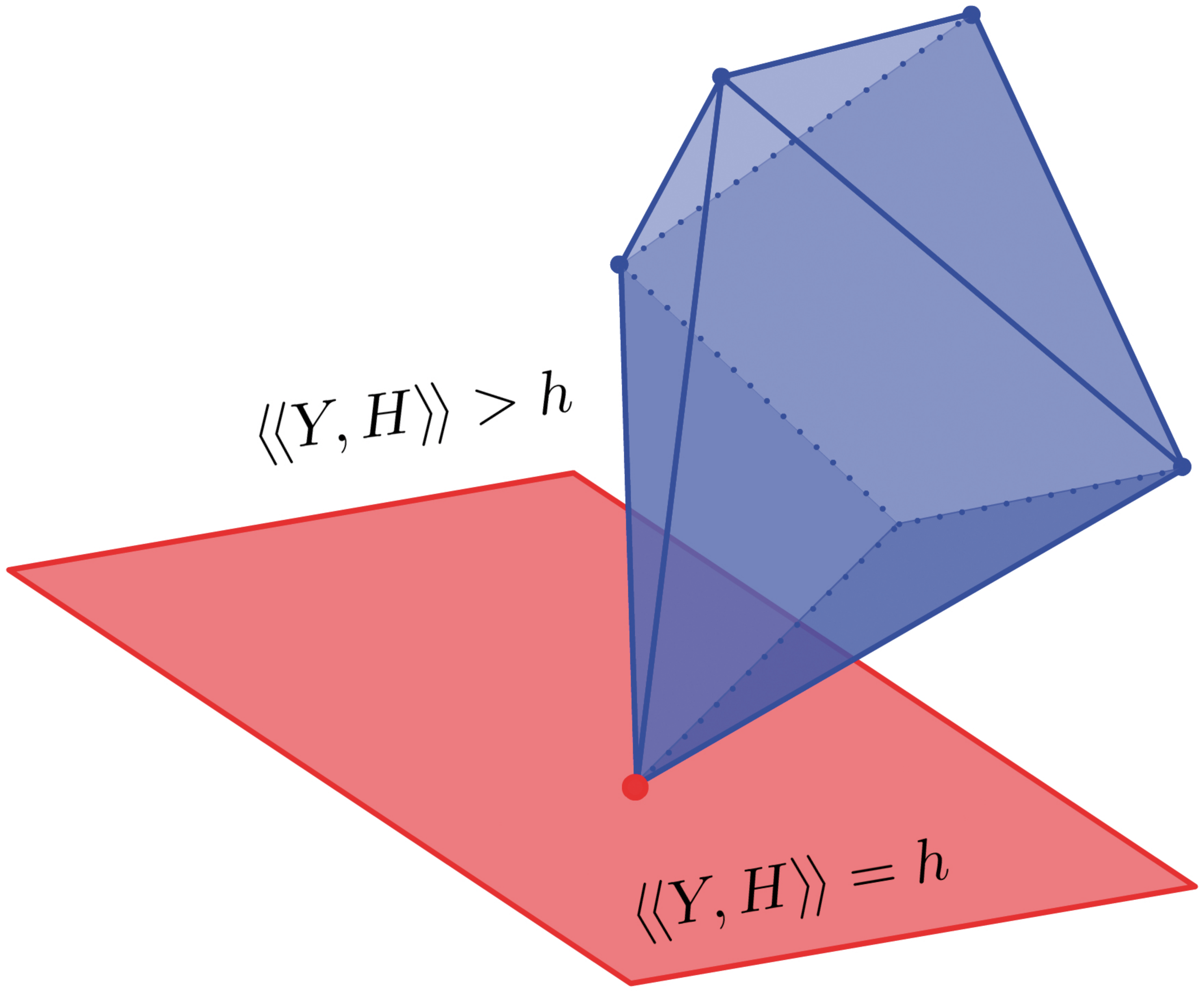}\\
  \caption{Exposed point}\label{exposed}
\end{figure}

\subsection{Canonical forms for binary forms of low degree}

We recall canonical forms for binary forms of degree $2$, $3$ and $4$.

\begin{table}
\centering
\begin{tabular}{|c|l|c|l|}
\hline
\multicolumn{4}{  | c |          }  {\centering Canonical Form }          \tabularnewline
\multicolumn{4}{  | c |           }  {\centering Binary quartic forms }          \tabularnewline
\hline
 Not. & Over $\CC$ & Not. & Over $\RR $ \\
\hline
\multirow{8}{*}{ I } & \multirow{8}{*}{ $x^4 + 2tx^2y^2+y^4$ } &\multirow{2}{*}{ a.} &  \multirow{1}{*}{ $x^4 + 2tx^2y^2+y^4$}  \\
 &  \multirow{8}{*}{$(t \neq 1)$ }& & $(-1 < t \neq 1)$\\
\cline{3-4}
                     & &\multirow{2}{*}{ b.} & $-[x^4 + 2tx^2y^2+y^4]$ \\
 & & & $(-1 < t \neq 1)$ \\
\cline{3-4}
                     & & \multirow{2}{*}{c.} & $x^4 + 2tx^2y^2-y^4$  \\
 & & & $(t \in \RR)$\\
\cline{3-4}
                     & & \multirow{2}{*}{d.} & $x^4 + 2tx^2y^2+y^4$ \\
 & & & $(-1 > t)$\\
\hline
\multirow{4}{*}{ II } & \multirow{4}{*}{ $x^2 y^2 + y^4$ }& a. &  $x^2 y^2 + y^4$ \\
\cline{3-4}
                     & & b. & $-[x^2 y^2 + y^4]$  \\
\cline{3-4}
                     & & c. & $x^2 y^2 - y^4 $    \\
\cline{3-4}
                     & & d. & $-[x^2 y^2 - y^4]$  \\
\hline
\multirow{4}{*}{ III } & \multirow{4}{*}{  $x^2 y^2$ } & a. & $x^2 y^2$ \\
\cline{3-4}
                     & & b. & $-[x^2 y^2]$ \\
\cline{3-4}
                     & & c. & $x^4+2x^2 y^2+y^4$  \\
\cline{3-4}
                     & & d. & $-[x^4+2x^2 y^2+y^4]$ \\
\hline
                 IV    &  $x^3 y$  &  a.  & $x^3 y$ \\
\hline
\multirow{2}{*}{ V }   & \multirow{2}{*}{ $x^4$ }   &  a.  & $x^4$ \\
\cline{4-4}
                       &                            &  b.  & $-x^4$ \\
\hline
                VI     &   $0$                        &  a.  & $0$ \\
\hline
\end{tabular}
\caption{Canonical form for binary quartic forms}\label{table4t}
\end{table}

\begin{table}[h]
\centering
\begin{tabular}{|c|l|c|l|}
\hline
\multicolumn{4}{  | c |          }  {\centering Canonical form }          \tabularnewline
\multicolumn{4}{  | c |           }  {\centering Binary cubic forms}          \tabularnewline
\hline
 Tipo & Sobre $\CC$ & Tipo & Sobre $\RR $ \\
\hline
\multirow{2}{*}{ I }   &\multirow{2}{*}{ $y^3 + yx^2$ }   & a. & $y^3 + yx^2$ \\
\cline{3-4}
                       &                                  & b. & $y^3 - yx^2$ \\
\hline
 II                  & $y^2 x$  &  a. &   $y^2 x$ \\
\hline
 III                 & $y^3$    &  a. &   $y^3$   \\
\hline
IV                   &  $0$     &  a. &    $0$    \\
\hline
\end{tabular}
\caption{Canonical forms for binary cubic forms}\label{table3n}
\end{table}

\begin{table}[h]
\centering
\begin{tabular}{|c|l|c|l|}
\hline
\multicolumn{4}{  | c |          }  {\centering Canonical form}          \tabularnewline
\multicolumn{4}{  | c |           }  {\centering Binary quadratic forms}          \tabularnewline
\hline
 Tipo & Sobre $\CC$ & Tipo & Sobre $\RR $ \\
\hline
\multirow{3}{*}{ I } & \multirow{3}{*}{$x^2+y^2$} & a. & $x^2+y^2$ \\
\cline{3-4}
                     &                            & b. & $-[x^2+y^2]$ \\
\cline{3-4}
                     &                            & c. & $x^2 - y^2$ \\
\hline
\multirow{2}{*}{II}  &\multirow{2}{*}{$x^2$ }     & a. & $x^2$ \\
\cline{3-4}
                     &                            & b. & $-x^2$ \\
\hline
 III                 &  $0$                       & a. & $0$\\
\hline
\end{tabular}
\caption{Canonical forms for binary quadratic forms}\label{table2d}
\end{table}

\end{document}